\documentclass[reqno]{amsart}
\usepackage{amsmath, amssymb, amsthm, epsfig}
\usepackage{hyperref, latexsym}
\usepackage{url} 
\def\today{\ifcase\month\or
  January\or February\or March\or April\or May\or June\or
  July\or August\or September\or October\or November\or December\fi
  \space\number\day, \number\year}

\DeclareMathOperator{\sgn}{\mathrm{sgn}}

 \newtheorem{theorem}{Theorem}[section]
 \newtheorem{lemma}[theorem]{Lemma}
 
 \newtheorem{corollary}[theorem]{Corollary}
 \theoremstyle{definition}

 \theoremstyle{remark}
 \newtheorem{remark}[theorem]{Remark}
 \newcommand{\mc}{\mathcal}
 \newcommand{\A}{\mc{A}}

 \newcommand{\G}{\mc{G}}
 \newcommand{\sH}{\mc{H}}

 \newcommand{\K}{\mc{K}}

 \newcommand{\rr}{\mc{R}}

 \newcommand{\C}{\mathbb{C}}
 \newcommand{\R}{\mathbb{R}}

 \newcommand{\Z}{\mathbb{Z}}
 \newcommand{\csch}{\mathrm{csch}}
 \newcommand{\tF}{\widehat{F}}
 \newcommand{\tG}{\widetilde{G}}
 \newcommand{\tH}{\widetilde{H}}
 \newcommand{\ug}{\widehat{g}}
 \newcommand{\wg}{\widetilde{g}}
 \newcommand{\uh}{\widehat{h}}
 \newcommand{\wh}{\widetilde{h}}
 \newcommand{\wl}{\widetilde{l}}
 \newcommand{\tL}{\widehat{L}}
 \newcommand{\wm}{\widetilde{m}}
 \newcommand{\tM}{\widehat{M}}
 
 \newcommand{\tq}{\widehat{q}}
 
 \newcommand{\tu}{\widehat{u}}
 \newcommand{\tv}{\widehat{v}}
 \newcommand{\ba}{\boldsymbol{a}}
 
 \newcommand{\bx}{\boldsymbol{x}}
 
 \newcommand{\h}{\frac12}
 \newcommand{\hh}{\tfrac12}
 \newcommand{\ds}{\text{\rm d}s}
 \newcommand{\dt}{\text{\rm d}t}
 \newcommand{\du}{\text{\rm d}u}
 \newcommand{\dv}{\text{\rm d}v}
 \newcommand{\dw}{\text{\rm d}w}
 \newcommand{\dx}{\text{\rm d}x}
 \newcommand{\dy}{\text{\rm d}y}
 \newcommand{\dl}{\text{\rm d}\lambda}
 \newcommand{\dmu}{\text{\rm d}\mu(\lambda)}
 \newcommand{\dnu}{\text{\rm d}\nu(\lambda)}
 
 \newcommand{\dnnu}{\text{\rm d}\nu_n(\lambda)}

\begin{document}

\title[Extremal functions]{Some extremal functions\\in Fourier analysis, II}
\author[Carneiro and Vaaler]{Emanuel Carneiro* and Jeffrey D. Vaaler**}
\thanks{*Research supported by CAPES/FULBRIGHT grant BEX 1710-04-4.}
\thanks{**Research supported by the National Science Foundation, DMS-06-03282.}
\date{\today}
\subjclass[2000]{Primary 41A30, 41A52, 42A05. Secondary 41A05, 41A44, 42A10. }
\keywords{exponential type, extremal functions, majorization}
\address{Department of Mathematics, University of Texas at Austin, Austin, TX 78712-1082.}
\email{ecarneiro@math.utexas.edu}
\address{Department of Mathematics, University of Texas at Austin, Austin, TX 78712-1082.}
\email{vaaler@math.utexas.edu}
\allowdisplaybreaks
\numberwithin{equation}{section}

\begin{abstract} We obtain extremal majorants and minorants of exponential type for a class of 
even functions on $\R$ which includes $\log |x|$ and $|x|^\alpha$, where $-1 < \alpha < 1$.  We also give periodic
versions of these results in which the majorants and minorants are trigonometric polynomials of bounded degree.
As applications we obtain optimal estimates for certain Hermitian forms, which include discrete analogues of 
the one dimensional Hardy-Littlewood-Sobolev inequalities.  A further application provides
an Erd\"{o}s-Tur\'{a}n-type inequality that estimates the sup norm of algebraic polynomials on the unit disc
in terms of power sums in the roots of the polynomials.
\end{abstract}

\maketitle
\section{Introduction}

In this paper we consider the following extremal problem.  Let $f:\R\rightarrow \R$ be a given function. 
Determine real entire functions $G:\C\rightarrow\C$ and $H:\C\rightarrow\C$ such that $G$ and $H$ have exponential 
type at most $2\pi$, and satisfy the inequality 
\begin{equation}\label{intro0}
G(x) \le f(x) \le H(x)
\end{equation} 
for all real $x$.  And among such functions $G$ and $H$, determine those for which the integrals
\begin{equation}\label{intro1}
\int_{-\infty}^{\infty} \left\{f(x) - G(x)\right\}\ \dx\quad\text{and}
			\quad \int_{-\infty}^{\infty} \left\{H(x) - f(x)\right\}\ \dx
\end{equation}
are minimized.  By a real entire function we understand an entire function that takes 
real values at points of $\R$.  

In the special case $f(x) = \sgn(x)$, an explicit solution to this problem was found in the 1930's by A. Beurling, but 
his results were not published at the time of their discovery.  Later, Beurling's solution was rediscovered by 
A. Selberg, who recognized its importance in connection with the large sieve inequality of analytic number theory.  
In particular, Selberg observed that Beurling's function could be used to majorize and minorize the function
\begin{equation}\label{intro3}
\hh\sgn(x - a) + \hh\sgn(b - x) = \begin{cases}  1& \text{if $a < x < b$,}\\
					  \hh& \text{if $x = a$ or $x = b$,}\\
					  0& \text{if $x < a$ or $b < x$,}\end{cases}
\end{equation}
where $a < b$.  Of course, this function is essentially the characteristic function of the interval with
endpoints $a$ and $b$.  The functions that majorize and minorize (\ref{intro3}) are real entire functions
of exponential type at most $2\pi$, but in applications it is often useful to exploit the fact that their
Fourier transforms are continuous functions supported on the interval $[-1, 1]$. 
An account of these functions, the history of their discovery, and many applications
can be found in \cite{GV}, \cite{GV2}, \cite{M}, \cite{S}, \cite{V}, and \cite{V2}.  Further examples have been 
given by F. Littmann \cite{Lit}, \cite{Lit2}, and extensions of the problem to several variables are considered 
in \cite{BMV}, \cite{HV}, and \cite{LV}.

Let $\lambda$ be a positive real parameter.  Define entire functions $z\mapsto L(\lambda,z)$ and 
$z\mapsto M(\lambda,z)$ by
\begin{equation}\label{intro4}
L(\lambda,z) = \left(\dfrac{\cos\pi z}{\pi}\right)^2\Bigg\{\sum_{k\in\Z} \dfrac{e^{-\lambda|k + \h|}}{(z - k - \h)^2}
	- \lambda\sum_{l\in\Z} \dfrac{\sgn(l+\h)e^{-\lambda|l + \h|}} {(z - l - \h)} \Bigg\},
\end{equation}
and
\begin{equation}\label{intro5}
M(\lambda,z) = \left(\dfrac{\sin \pi z}{\pi}\right)^2\Bigg\{\sum_{k\in \Z} \dfrac{e^{-\lambda|k|}}{(z - k)^2} - 
	\lambda\sum_{l\in\Z} \dfrac{\sgn(l)e^{-\lambda|l|}}{(z - l)}\Bigg\}.
\end{equation}
In \cite{GV} it was shown that both $z\mapsto L(\lambda,z)$  and $z\mapsto M(\lambda,z)$ are real entire 
functions of exponential type $2\pi$, they are bounded and integrable on $\R$, and they satisfy the inequality
\begin{equation}\label{intro6}
L(\lambda, x) \leq e^{-\lambda |x|} \leq M(\lambda, x)
\end{equation}
for all real $x$.  Moreover, for each positive value of $\lambda$ the functions $z\mapsto L(\lambda,z)$ and 
$z\mapsto M(\lambda,z)$ are the unique extremal functions for the problem of minimizing the integrals 
(\ref{intro1}).  That is, the values of the two integrals
\begin{equation}\label{intro7}
\int_{-\infty}^{\infty} \left\{ e^{-\lambda |x|} - L(\lambda,x) \right\}\ \dx 
		= \tfrac{2}{\lambda} - \csch\left(\tfrac{\lambda}{2}\right),
\end{equation}
and 
\begin{equation}\label{intro8}
\int_{-\infty}^{\infty} \left\{M(\lambda,x) - e^{-\lambda |x|} \right\}\ \dx 
		= \coth\left(\tfrac{\lambda}{2}\right) - \tfrac{2}{\lambda},
\end{equation}
are both minimal.  It was also shown in \cite{GV} that the Fourier transforms
\begin{equation}\label{intro8.2}
\tL(\lambda,t) = \int_{-\infty}^{\infty}L(\lambda,x)e(-tx)\ \dx\quad\text{and}\quad
	\tM(\lambda,t) = \int_{-\infty}^{\infty}M(\lambda,x)e(-tx)\ \dx
\end{equation}
are continuous functions of the real variable $t$ supported on the interval $[-1,1]$.  Here we write 
$e(z) = e^{2\pi i z}$.  Both Fourier transforms in (\ref{intro8.2}) are nonnegative functions of $t$ 
and are given explicitly here in Lemma \ref{lem3.3}.  

If $\mu$ is a suitable measure defined on the Borel subsets of $(0,\infty)$, then one might hope to show that
\begin{equation}\label{extra1}
z\mapsto \int_0^{\infty} L(\lambda, z)\ \dmu\quad\text{and}\quad z\mapsto \int_0^{\infty} M(\lambda, z)\ \dmu
\end{equation}  
both define real entire functions of $z$ with exponential type at most $2\pi$.  If this is so then they clearly
satisfy the inequality
\begin{equation}\label{extra2}
\int_0^{\infty} L(\lambda, x)\ \dmu \le \int_0^{\infty} e^{-\lambda |x|}\ \dmu \le
 			\int_0^{\infty} M(\lambda, x)\ \dmu		
\end{equation}
for all real $x$.  In this case one may also hope to show that these real entire functions are extremal with respect to
the problem of majorizing and minorizing the function
\begin{equation*}\label{extra3}
x\mapsto \int_0^{\infty} e^{-\lambda |x|}\ \dmu.
\end{equation*}
In fact such a result was obtained in \cite[Theorem 9]{GV}, but only under the restrictive hypothesis that
\begin{equation}\label{extra4}
\int_0^{\infty} \frac{\lambda + 1}{\lambda}\ \dmu < \infty.
\end{equation}
In the present paper we solve the extremal problem
for a wider class of measures.  By making special choices for $\mu$, we are able to give explicit
solutions to the extremal problem for such examples as $x\mapsto \log |x|$ and $x\mapsto |x|^{\alpha}$, where
$-1 < \alpha < 1$.  We now describe these results.

Let $\mu$ be a measure defined on the Borel subsets of $(0,\infty)$ such that
\begin{equation}\label{intro31}
0 < \int_0^{\infty} \frac{\lambda}{\lambda^2 + 1}\ \dmu < \infty.
\end{equation}
It follows from (\ref{intro31}) that for $x\not= 0$ the function
\begin{equation*}\label{intro32}
\lambda\mapsto e^{-\lambda |x|} - e^{-\lambda}
\end{equation*}
is integrable on $(0,\infty)$ with respect to $\mu$.
We define $f_{\mu}:\R\rightarrow \R\cup\{\infty\}$ by
\begin{equation}\label{intro33}
f_{\mu}(x) = \int_0^{\infty} \big\{e^{-\lambda |x|} - e^{-\lambda}\big\}\ \dmu,
\end{equation}
where 
\begin{equation*}\label{intro34}
f_{\mu}(0) = \int_0^{\infty} \big\{1 - e^{-\lambda}\}\ \dmu
\end{equation*}
may take the value $\infty$.  Clearly $f_{\mu}(x)$ is infinitely differentiable at each 
real number $x\not= 0$.  In particular, we find that
\begin{equation*}\label{intro35}
f^{\prime}_{\mu}(x) = -\sgn(x)\int_0^{\infty} \lambda e^{-\lambda |x|}\ \dmu
\end{equation*}
for all $x\not= 0$.  Using $f_{\mu}$ and $f^{\prime}_{\mu}$, we define $G_{\mu}:\C\rightarrow\C$ by
\begin{equation}\label{intro36}
G_{\mu}(z) = \lim_{N\rightarrow\infty} \left(\frac{\cos \pi z}{\pi}\right)^2 
	\left\{\sum_{n=-N}^{N+1} \frac{f_{\mu}(n-\hh)}{(z - n + \hh)^2} 
		+ \sum_{n=-N}^{N+1}\frac{f^{\prime}_{\mu}(n-\hh)}{(z - n + \hh)}\right\}.
\end{equation}
We will show that the limit on the right of (\ref{intro36}) converges uniformly on compact subsets of $\C$ and therefore
defines $G_{\mu}(z)$ as a real entire function.  Then it is easy to check that $G_{\mu}$ interpolates the values of 
$f_{\mu}$ and $f^{\prime}_{\mu}$ at real numbers $x$ such that $x + \h$ is an integer.  That is, the system of identities
\begin{equation}\label{intro37}
G_{\mu}(n - \hh) = f_{\mu}(n - \hh)\quad\text{and}\quad G^{\prime}_{\mu}(n - \hh) = f^{\prime}_{\mu}(n - \hh)
\end{equation}
holds for each integer $n$.  

Because $f_{\mu}(0)$ may take the value $\infty$, there can be no question of majorizing $f_{\mu}(x)$ by a real entire
function.  However, we will prove that the real entire function $G_{\mu}(z)$ minorizes $f_{\mu}(x)$ on $\R$, and 
satisfies the following extremal property.

\begin{theorem}\label{thm1.1}  Assume that the measure $\mu$ satisfies {\rm (\ref{intro31})}.  
\begin{itemize}
\item[(i)]  The real entire function $G_{\mu}(z)$ defined by {\rm (\ref{intro36})} has exponential type at most $2\pi$.
\item[(ii)]  For real $x\not= 0$ the function
\begin{equation*}\label{intro40}
\lambda \mapsto e^{-\lambda |x|} - L(\lambda, x)
\end{equation*}
is nonnegative and integrable on $(0, \infty)$ with respect to $\mu$. 
\item[(iii)]  For all real $x$ we have
\begin{equation}\label{intro41}
0 \le f_{\mu}(x) - G_{\mu}(x) = \int_0^{\infty} \big\{e^{-\lambda |x|} - L(\lambda, x)\big\}\ \dmu.
\end{equation}
\item[(iv)]  The nonnegative function $x\mapsto f_{\mu}(x) - G_{\mu}(x)$ is integrable on $\R$, and
\begin{equation}\label{intro42}
\int_{-\infty}^{\infty} \left\{f_{\mu}(x) - G_{\mu}(x)\right\}\ \dx
		= \int_0^{\infty}\big\{\tfrac{2}{\lambda} - \csch\bigl(\tfrac{\lambda}{2}\bigr)\big\}\ \dmu.
\end{equation}
\item[(v)]  If $t\not= 0$ then
\begin{align}\label{intro43}
\begin{split}
\int_{-\infty}^{\infty} \big\{f_{\mu}(x) - &G_{\mu}(x)\big\}e(-tx)\ \dx \\
	&= \int_0^{\infty} \frac{2\lambda}{\lambda^2 + 4\pi^2 t^2}\ \dmu
		- \int_0^{\infty} \tL\bigl(\lambda, t\bigr)\ \dmu.
\end{split}
\end{align}
\item[(vi)]  If $\tG(z)$ is a real entire function of exponential type at most $2\pi$ such that
\begin{equation*}\label{intro44}
\tG(x) \le f_{\mu}(x)
\end{equation*}
for all real $x$, then
\begin{equation}\label{intro45}
\int_{-\infty}^{\infty} \left\{f_{\mu}(x) - G_{\mu}(x)\right\}\ \dx 
		\le \int_{-\infty}^{\infty} \left\{f_{\mu}(x) - \tG(x)\right\}\ \dx.
\end{equation}
\item[(vii)]  There is equality in the inequality {\rm (\ref{intro45})} if and only if $\tG(z) = G_{\mu}(z)$.
\end{itemize}
\end{theorem}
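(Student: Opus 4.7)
The strategy is to realize $G_\mu$ as the superposition
\begin{equation*}
G_\mu(z) \;=\; \int_0^\infty \bigl\{L(\lambda,z) - e^{-\lambda}\bigr\}\, \dmu,
\end{equation*}
and then transfer each property across the $\mu$-integral. For parts (i)--(iii), I would start from (\ref{intro36}), substitute the integral representations of $f_\mu(n-\hh)$ and $f'_\mu(n-\hh)$, and interchange $\sum_{n=-N}^{N+1}$ with $\int_0^\infty$, writing $P_N(z) = \int_0^\infty T_N(\lambda,z)\, \dmu$, where $T_N(\lambda,\cdot)$ is the analogous Hermite-type partial sum for the function $x\mapsto e^{-\lambda|x|} - e^{-\lambda}$. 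The identity $(\cos\pi z/\pi)^2\sum_n(z-n+\hh)^{-2} = 1$ and the explicit formula (\ref{intro4}) together give $\lim_N T_N(\lambda,z) = L(\lambda,z) - e^{-\lambda}$. The technical core is a uniform estimate of the shape
\begin{equation*}
|T_N(\lambda,z)| \;\le\; C_K\, \frac{\lambda}{\lambda^2+1}\, e^{2\pi|\operatorname{Im} z|},\qquad z\in K,\ \lambda>0,\ N\ge 1,
\end{equation*}
on compact $K\subset\C$, split along the regimes $\lambda\to 0$ (using $|e^{-\lambda|x|} - e^{-\lambda}| = O(\lambda)$) and $\lambda\to\infty$ (geometric decay). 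By (\ref{intro31}) this bound is $\mu$-integrable; dominated convergence then delivers the displayed representation of $G_\mu$, shows that $G_\mu$ is entire of exponential type at most $2\pi$, and, subtracted from $f_\mu(x) = \int_0^\infty\{e^{-\lambda|x|}-e^{-\lambda}\}\, \dmu$, produces (ii) and (iii) from (\ref{intro6}) at once.

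Parts (iv) and (v) are Fubini/Tonelli exchanges. Nonnegativity of $f_\mu(x) - G_\mu(x)$ from (iii) justifies Tonelli in
\begin{equation*}
\int_{-\infty}^{\infty}\!\!\{f_\mu - G_\mu\}\, \dx \;=\; \int_0^\infty\!\!\int_{-\infty}^{\infty}\!\!\{e^{-\lambda|x|} - L(\lambda,x)\}\, \dx\, \dmu,
\end{equation*}
and (\ref{intro7}) yields (\ref{intro42}); the right-hand side is finite because $\tfrac{2}{\lambda} - \csch(\tfrac{\lambda}{2}) = O\bigl(\lambda/(\lambda^2+1)\bigr)$. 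Once the integrand is $L^1(\R)$, Fubini applies after insertion of $e(-tx)$, producing (v) via $\int e^{-\lambda|x|}e(-tx)\, \dx = 2\lambda/(\lambda^2+4\pi^2 t^2)$ together with the definition of $\tL$ in (\ref{intro8.2}).

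For (vi) and (vii), given a real entire $\tG$ of type $\le 2\pi$ with $\tG \le f_\mu$ on $\R$, I may assume the right-hand side of (\ref{intro45}) is finite (otherwise the inequality is trivial). Then $F := G_\mu - \tG = (G_\mu - f_\mu) + (f_\mu - \tG)$ is real entire of type $\le 2\pi$ and lies in $L^1(\R)$. Applying the Vaaler-type quadrature
\begin{equation*}
\int_{-\infty}^{\infty} F(x)\, \dx \;=\; \sum_{n\in\Z} F(n - \hh),
\end{equation*}
valid for $L^1$ entire functions of type at most $2\pi$ (essentially Poisson summation at the half-integer lattice, with the boundary frequencies $\pm 1$ harmless by Riemann--Lebesgue), and invoking (\ref{intro37}) together with $\tG(n-\hh)\le f_\mu(n-\hh)$, one obtains
\begin{equation*}
\int_{-\infty}^{\infty}(G_\mu - \tG)\, \dx \;=\; \sum_{n\in\Z}\{f_\mu(n-\hh) - \tG(n-\hh)\} \;\ge\; 0,
\end{equation*}
which rearranges to (\ref{intro45}). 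Equality forces $\tG(n-\hh) = f_\mu(n-\hh)$ for every $n\in\Z$; since $f_\mu$ is differentiable at $n-\hh\ne 0$ and $\tG$ touches $f_\mu$ from below at these nodes, the derivatives also match: $\tG'(n-\hh) = f_\mu'(n-\hh)$. Hence $\tG - G_\mu$ is an $L^1$ entire function of type $\le 2\pi$ vanishing to order two at every half-integer; factoring $\tG - G_\mu = \cos^2(\pi z)\, H(z)$ and combining the Plancherel--Polya boundedness of $\tG - G_\mu$ with a Liouville argument forces $H\equiv 0$, proving (vii).

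The main obstacle is the uniform, $\mu$-integrable estimate on $|T_N(\lambda,z)|$ in the first step: it simultaneously underwrites dominated convergence for (i)--(iii), the finiteness needed to apply Fubini in (iv)--(v), and the $L^1$-membership of $G_\mu - \tG$ needed for the quadrature in (vi)--(vii). Everything else is a Fubini exchange or a direct application of the closed-form identities (\ref{intro6})--(\ref{intro8.2}) recorded for $L(\lambda,\cdot)$.
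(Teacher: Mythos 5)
Your overall strategy is a genuine alternative to the paper's: you aim to establish the closed representation $G_\mu(z) = \int_0^\infty\{L(\lambda,z) - e^{-\lambda}\}\,\dmu$ directly via a uniform dominated-convergence estimate on the Hermite-interpolation partial sums $T_N(\lambda,z)$, whereas the paper never estimates these partial sums at all. Instead, the paper represents the remainder $\Phi - \G(\Phi,\cdot)$ as the Cauchy-type contour integral $I(\beta,\Phi;z)$ of Section 2, proves that the analytic function $F_\mu(z) = \int_0^\infty\{e^{-\lambda z} - e^{-\lambda}\}\,\dmu$ satisfies the growth conditions (\ref{cv10})--(\ref{cv12}) (Lemma \ref{lem4.1}), and then recovers the pointwise identity (iii) by approximating $\mu$ by the truncated finite measures $\nu_n$ of (\ref{pt20}) and passing to the limit under the contour integral. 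Your route would be shorter if it worked, but the technical core of it is not established and is in fact where the difficulty lies.

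Specifically, the claimed bound $|T_N(\lambda,z)| \le C_K\,\tfrac{\lambda}{\lambda^2+1}\,e^{2\pi|\operatorname{Im} z|}$ uniformly in $N$ is not justified by the reasoning you give, and the justification as stated is wrong. For small $\lambda$ you invoke $|e^{-\lambda|x|}-e^{-\lambda}| = O(\lambda)$, but this estimate fails for large $|x|$: the correct bound is $O(\lambda(|x|+1))$ (or trivially $O(1)$). If one bounds the two constituent sums in $T_N$ separately, the ``value'' sum contributes
\begin{equation*}
\sum_{1\le |n|\le N} \frac{\bigl|e^{-\lambda|n-\h|}-e^{-\lambda}\bigr|}{|z-n+\h|^2} \;\asymp\; \lambda\sum_{1\le |n|\le \min(N,1/\lambda)}\frac{1}{|n|} \;\asymp\; \lambda\,\log\bigl(\min(N,1/\lambda)\bigr),
\end{equation*}
and the ``derivative'' sum behaves likewise, so separate estimation yields only $O(\lambda\log(1/\lambda))$, not $O(\lambda)$. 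The weight $\lambda\log(1/\lambda)$ near $\lambda=0$ is \emph{not} $\mu$-integrable under (\ref{intro31}) in general (take, e.g., $\dmu = \lambda^{-2}(\log 1/\lambda)^{-2}\,\dl$ near zero), so this gap is fatal to the dominated-convergence step, and with it to your proofs of (i)--(iii), to the $L^1$ membership feeding Fubini in (iv)--(v), and to the integrability of $G_\mu - \tG$ used in (vi)--(vii). The true $O(\lambda)$ bound does hold, but only because of a cancellation between the second-order-pole and first-order-pole terms of the interpolation kernel that you nowhere exploit: writing $\xi = n-\h$, one has the algebraic identity
\begin{equation*}
\frac{-|\xi|}{(z-\xi)^2} - \frac{\sgn\xi}{z-\xi} \;=\; \frac{-\sgn(\xi)\,z}{(z-\xi)^2},
\end{equation*}
which converts the non-summable $O(1/|n|)$ tail into a summable $O(1/n^2)$ tail after grouping. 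To make your approach rigorous you would have to carry out this regrouping carefully and combine it with the regimes $\lambda|\xi|\lesssim 1$ and $\lambda|\xi|\gtrsim 1$; the paper's contour-integral machinery is precisely designed to avoid this delicate bookkeeping. (Your treatment of (vi)--(vii) is essentially the same as the paper's, modulo replacing the Ces\`aro-summed quadrature of \cite[Lemma 4]{GV} by pointwise Poisson summation at the half-integer lattice; both are valid once integrability of $G_\mu - \tG$ has been secured.)
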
  

Now assume that the measure $\mu$ satisfies the condition
\begin{equation}\label{intro47}
0 < \int_0^{\infty} \frac{\lambda}{\lambda + 1}\ \dmu < \infty,
\end{equation}
which is obviously more restrictive than (\ref{intro31}).  From (\ref{intro47}) we have
\begin{equation*}\label{intro48}
f_{\mu}(x) \le f_{\mu}(0) = \int_0^{\infty} \big\{1 - e^{-\lambda}\}\ \dmu < \infty,
\end{equation*}
for all real $x$.  Thus we may try to determine a real entire function that majorizes $f_{\mu}(x)$ on $\R$.
Toward this end we define $H_{\mu}:\C\rightarrow\C$ by
\begin{equation}\label{intro49}
H_{\mu}(z) = \lim_{N\rightarrow\infty} \left(\frac{\sin \pi z}{\pi}\right)^2 
	\left\{\sum_{|n| \le N} \frac{f_{\mu}(n)}{(z - n)^2} 
		+ \sum_{1 \le |n| \le N}\frac{f^{\prime}_{\mu}(n)}{(z - n)}\right\}.
\end{equation}
Again we will show that the limit on the right of (\ref{intro49}) converges uniformly on compact subsets of $\C$ 
and therefore defines $H_{\mu}(z)$ as a real entire function.  In this case the function $H_{\mu}$ interpolates 
the values of $f_{\mu}$ and $f^{\prime}_{\mu}$ at the nonzero integers.  That is, the identities
\begin{equation*}\label{intro50}
H_{\mu}(n) = f_{\mu}(n)\quad\text{and}\quad H^{\prime}_{\mu}(n) = f^{\prime}(n)
\end{equation*}
hold at each integer $n\not= 0$, and at zero we find that
\begin{equation*}\label{intro51}
H_{\mu}(0) = f_{\mu}(0)\quad\text{and}\quad H^{\prime}_{\mu}(0) = 0.
\end{equation*}

As (\ref{intro47}) is more restrictive than (\ref{intro31}), the function $G_{\mu}(z)$ continues to minorize 
$f_{\mu}(x)$ on $\R$ as described in Theorem \ref{thm1.1}.  We will prove that the real entire function $H_{\mu}(z)$ 
majorizes $f_{\mu}(x)$ on $\R$, and satisfies an analogous extremal property.

\begin{theorem}\label{thm1.2}  Assume that the measure $\mu$ satisfies {\rm (\ref{intro47})}.  
\begin{itemize}
\item[(i)]  The real entire function $H_{\mu}(z)$ defined by {\rm (\ref{intro49})} has exponential type at most $2\pi$.
\item[(ii)]  For all real $x$ the function
\begin{equation*}\label{intro52}
\lambda \mapsto M(\lambda, x) - e^{-\lambda |x|}
\end{equation*}
is nonnegative and integrable on $(0, \infty)$ with respect to $\mu$.
\item[(iii)]  For all real $x$ we have
\begin{equation}\label{intro53}
0 \le H_{\mu}(x) - f_{\mu}(x) = \int_0^{\infty} \big\{M(\lambda, x) - e^{-\lambda |x|}\big\}\ \dmu.
\end{equation}
\item[(iv)]  The nonnegative function $x\mapsto H_{\mu}(x) - f_{\mu}(x)$ is integrable on $\R$, and
\begin{equation}\label{intro54}
\int_{-\infty}^{\infty} \left\{H_{\mu}(x) - f_{\mu}(x)\right\}\ \dx
		= \int_0^{\infty}\big\{\coth\bigl(\tfrac{\lambda}{2}\bigr) - \tfrac{2}{\lambda}\big\}\ \dmu.
\end{equation}
\item[(v)]  If $t\not= 0$ then
\begin{align}\label{intro55}
\begin{split}
\int_{-\infty}^{\infty} \big\{H_{\mu}(x) - &f_{\mu}(x)\big\}e(-tx)\ \dx \\
	&= \int_0^{\infty} \tM\bigl(\lambda, t\bigr)\ \dmu
		- \int_0^{\infty} \frac{2\lambda}{\lambda^2 + 4\pi^2 t^2}\ \dmu.
\end{split}
\end{align}
\item[(vi)]  If $\tH(z)$ is a real entire function of exponential type at most $2\pi$ such that
\begin{equation*}\label{intro56}
f_{\mu}(x) \le \tH(x)
\end{equation*}
for all real $x$, then
\begin{equation}\label{intro57}
\int_0^{\infty} \left\{H_{\mu}(x) - f_{\mu}(x)\right\}\ \dx \le \int_0^{\infty} \left\{\tH(x) - f_{\mu}(x)\right\}\ \dx.
\end{equation}
\item[(vii)]  There is equality in the inequality {\rm (\ref{intro57})} if and only if $\tH(z) = H_{\mu}(z)$.
\end{itemize}
\end{theorem}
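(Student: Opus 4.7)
The plan is to run the argument in close parallel with Theorem \ref{thm1.1}, using the stronger hypothesis (\ref{intro47}) to handle the point $x=0$ and to majorize $f_\mu$ globally, and to bootstrap from \cite[Theorem~9]{GV}, which covers the case of measures satisfying the more restrictive (\ref{extra4}). Truncate $\mu$ to $\mu_N$, the restriction of $\mu$ to $[1/N,N]$, which is finite and satisfies (\ref{extra4}); that theorem then provides real entire majorants $H_{\mu_N}$ of exponential type $\le 2\pi$ given by the truncated version of the series (\ref{intro49}) and satisfying
\[
H_{\mu_N}(x)-f_{\mu_N}(x)\;=\;\int_{1/N}^{N}\big(M(\lambda,x)-e^{-\lambda|x|}\big)\,\dmu.
\]

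First I would dispatch part (ii). Nonnegativity is immediate from (\ref{intro6}). Integrability in $\lambda$ for each fixed $x$ is obtained by combining (\ref{intro8}) with the elementary asymptotic $\coth(\tfrac{\lambda}{2})-\tfrac{2}{\lambda}\asymp \lambda/(\lambda+1)$ (of order $\lambda$ near $0$ and order $1$ near infinity), which lies in $L^1(\dmu)$ by (\ref{intro47}); Tonelli on the nonnegative integrand gives integrability for $dx$-a.e.\ $x$, and I would upgrade to every $x$ through a pointwise-in-$x$ bound $M(\lambda,x)-e^{-\lambda|x|}\le C(x)\,\lambda/(\lambda+1)$ extracted from the explicit series (\ref{intro5}). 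Letting $N\to\infty$ in the displayed identity above, monotone convergence lifts the right side to $\int_0^\infty(M-e^{-\lambda|x|})\dmu$ and dominated convergence (via (\ref{intro47})) gives $f_{\mu_N}(x)\to f_\mu(x)$, so $H_{\mu_N}(x)$ converges pointwise on $\R$ to a limit that yields the identity (\ref{intro53}). Identification of this limit with the series (\ref{intro49}) follows from $f_{\mu_N}(n)\to f_\mu(n)$ and $f'_{\mu_N}(n)\to f'_\mu(n)$ together with uniform tail control on the interpolation data, which then delivers uniform convergence on compacts, part (i) via direct estimation of $|H_\mu(x+iy)|$ through the $(\sin\pi z/\pi)^2$ factor, and the interpolation equalities. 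Part (iv) is (iii) integrated over $\R$ with Tonelli and (\ref{intro8}); part (v) is an immediate Fubini on the nonnegative product $(M-e^{-\lambda|\cdot|})\,e(-t\cdot)$ together with $\widehat{e^{-\lambda|\cdot|}}(t)=2\lambda/(\lambda^2+4\pi^2 t^2)$.

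The substantive step is the extremal property (vi)--(vii). Let $\tH$ be real entire of type $\le 2\pi$ with $\tH\ge f_\mu$ on $\R$; we may assume $\int_\R(\tH-f_\mu)\dx<\infty$. Under (\ref{intro47}) Theorem \ref{thm1.1} gives $G_\mu\le f_\mu\le\tH$, so $F:=\tH-G_\mu$ is a real entire function of type $\le 2\pi$, nonnegative on $\R$, and in $L^1(\R)$ as a sum of two nonnegative $L^1$ functions. The key tool is the quadrature identity
\[
\int_{-\infty}^{\infty}F(x)\,\dx\;=\;\sum_{n\in\Z}F(n),
\]
valid for any such $F$ via the Krein factorization $F(z)=E(z)\overline{E(\bar z)}$ with $E\in L^2(\R)$ of type $\le\pi$ together with Parseval applied to $\hat E\in L^2[-\tfrac12,\tfrac12]$ (the boundary terms $\hat F(\pm 1)$ vanish by disjointness of $\hat E$ with its unit translate). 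Applying this to $\tH-G_\mu$ and to $H_\mu-G_\mu$, and subtracting the common finite integral $\int_\R(f_\mu-G_\mu)\dx$, produces (\ref{intro57}) via $\tH(n)\ge f_\mu(n)=H_\mu(n)$. For uniqueness, equality forces $\tH(n)=H_\mu(n)$ for every integer $n$, and the minimum of $\tH-f_\mu$ at $n\neq 0$ (where $f_\mu$ is differentiable) forces $\tH'(n)=H'_\mu(n)$. Hence $\tH-H_\mu$ is a real entire function of type $\le 2\pi$ in $L^1(\R)$ with double zeros at nonzero integers and a zero at $0$; writing $\tH(z)-H_\mu(z)=(\sin\pi z/\pi)^2\,z^{-1}P(z)$ with $P$ entire, the type bound makes $P$ of exponential type $0$, its at-most-linear growth on $\R$ (since $\tH-H_\mu$ is bounded on $\R$, which is automatic for band-limited $L^1$ functions) forces $P$ to be a polynomial of degree $\le 1$ by Phragm\'en--Lindel\"of, and the $L^1$ condition on $\tH-H_\mu$ then forces both coefficients of $P$ to vanish.

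The hardest point will be the pointwise upgrade of (ii) from $dx$-a.e.\ $x$ to every $x$, which requires uniform-in-$\lambda$ bounds on $M(\lambda,x)-e^{-\lambda|x|}$ that are integrable against $\dmu$ both as $\lambda\to 0^+$ (where the raw bound is $O(1)$, not integrable against $\dmu$) and as $\lambda\to\infty$ (where one must preserve off-integer decay), together with the matching tail control needed to identify $\lim_N H_{\mu_N}$ with the conditionally convergent interpolation series (\ref{intro49}).
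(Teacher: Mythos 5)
Your overall strategy — truncate $\mu$, invoke a known result (\cite[Theorem 9]{GV}) on the truncation, and pass to the limit — is the same one the paper uses (the paper truncates multiplicatively via the factor $e^{-\lambda/n}-e^{-\lambda n}$ rather than by restricting to $[1/N,N]$, but that is cosmetic). Where you genuinely diverge, and where the gap lies, is in what drives the limiting process. The paper does not try to pass to the limit directly in the interpolation series (\ref{intro49}): it works through the contour-integral representations of Section~2. The identity $H_\mu(z)-F_\mu(z)=J(\beta,f_\mu(0)-F_\mu;z)$ from (\ref{cv43}), combined with $M_n(z)-\Psi_n(z)=J(\beta,a_n-\Psi_n;z)$ from (\ref{cv7}), gives the pointwise convergence $M_n(z)-\Psi_n(z)\to H_\mu(z)-F_\mu(z)$ for every $z$ with $\Re z>\beta$, using only dominated convergence on the fixed contour $\Re w=\beta$ and the uniform bound on $|(a_n-\Psi_n(w))/w|$ there. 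With that in hand, the identity (\ref{intro53}) for every $x>0$ drops out of (\ref{Sec5.12}) and monotone convergence — finiteness of the $\lambda$-integral does not have to be established first, it is a consequence. This is exactly the ``hardest point'' you flag: you want a pointwise-in-$x$ bound $M(\lambda,x)-e^{-\lambda|x|}\le C(x)\lambda/(\lambda+1)$ read off the series (\ref{intro5}) and ``uniform tail control'' to identify $\lim_N H_{\mu_N}$ with (\ref{intro49}). Those are precisely the things Lemmas~\ref{lem2.2}--\ref{lem2.4} and \ref{lem2.6} are built to deliver, and without some equivalent you have not established (i), (iii), or the identification of the limit with the series. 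You correctly sense where the difficulty lies but do not resolve it; filling the gap would essentially reproduce the Section~2 machinery.

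For (vi)--(vii) your route is genuinely different and, modulo polish, correct. The paper invokes the Fej\'er-averaged Poisson quadrature \cite[Lemma 4]{GV} (as in (\ref{pt47})) directly on $\tH-H_\mu\in L^1$ of type $\le 2\pi$, and then \cite[Lemma 4, eq.\ 2.3]{GV} to recover $\tH'(0)=H_\mu'(0)$. You instead pass to the exact quadrature $\int F=\sum_n F(n)$ for \emph{nonnegative} $F\in L^1$ of type $\le 2\pi$ via Krein factorization $F=E\overline{E(\bar z)}$ with $\hat E$ supported in $[-\tfrac12,\tfrac12]$, applied to $\tH-G_\mu$ and $H_\mu-G_\mu$ (both nonnegative because $G_\mu\le f_\mu$), and subtract. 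That is valid, as is the uniqueness argument via the factorization $\tH-H_\mu=(\sin\pi z/\pi)^2\,P(z)/z$ with $P$ of type $0$ and at-most-linear growth, hence affine, and $L^1$ killing both coefficients. What the Krein route buys you is independence from \cite[Lemma 4]{GV} and an exact rather than averaged quadrature; what it costs is the need for the auxiliary nonnegativity trick through $G_\mu$ and the Phragm\'en--Lindel\"of / growth bookkeeping in the uniqueness step, which you only sketch. Two small slips worth noting: in (v) the integrand $(M-e^{-\lambda|\cdot|})e(-t\cdot)$ is not nonnegative, so the interchange is Fubini (justified by the absolute integrability you get from (iv)), not Tonelli; and the restriction $\mu_N=\mu|_{[1/N,N]}$ is a finite measure only because (\ref{intro31}) forces $\mu$ to be finite on compact subsets of $(0,\infty)$ — worth saying explicitly since it is the reason \cite[Theorem 9]{GV} applies.
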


The real entire functions $G_{\mu}(z)$ and $H_{\mu}(z)$, which occur in Theorem \ref{thm1.1} and Theorem \ref{thm1.2}, have
exponential type at most $2\pi$.  It is often useful to have results of the same sort in which the majorizing 
and minorizing functions have exponential type at most $2\pi\delta$, where $\delta$ is a positive parameter.  To 
accomplish this we introduce a second measure $\nu$ defined on Borel subsets $E\subseteq (0,\infty)$ by
\begin{equation}\label{intro59}
\nu(E) = \mu(\delta E),
\end{equation}
where
\begin{equation*}
\delta E = \{\delta x: x\in E\}
\end{equation*}
is the dilation of $E$ by $\delta$.  If $\mu$ satisfies (\ref{intro31}) then $\nu$ also satisfies (\ref{intro31}), and 
the two functions $f_{\mu}(x)$ and $f_{\nu}(x)$ are related by the identity
\begin{align}\label{intro60}
\begin{split}
f_{\nu}(x) &= \int_0^{\infty} \big\{e^{-\lambda |x|} - e^{-\lambda}\big\}\ \dnu \\
	   &= \int_0^{\infty} \big\{e^{-\lambda \delta^{-1}|x|} - e^{-\lambda \delta^{-1}}\big\}\ \dmu \\
	   &= \int_0^{\infty} \big\{e^{-\lambda |\delta^{-1}x|} - e^{-\lambda}\big\}\ \dmu
	   		- \int_0^{\infty} \big\{e^{-\lambda \delta^{-1}} - e^{-\lambda}\big\}\ \dmu \\
	   &= f_{\mu}\bigl(\delta^{-1} x\bigr) - f_{\mu}\bigl(\delta^{-1}\bigr).
\end{split}
\end{align}
We apply Theorem \ref{thm1.1} to the functions $f_{\nu}(x)$ and $G_{\nu}(z)$.  Then using (\ref{intro60}) we obtain
corresponding results for the functions 
\begin{equation*}
f_{\mu}(x) - f_{\mu}\bigl(\delta^{-1}\bigr) = f_{\nu}(\delta x)\quad\text{and}\quad G_{\nu}(\delta z),
\end{equation*}
where the entire function $z\mapsto G_{\nu}(\delta z)$ has exponential type at most $2\pi\delta$.  This leads easily to the 
following more general form of Theorem \ref{thm1.1}.  We have only stated those parts which we will use in later applications.

\begin{theorem}\label{thm1.3}  Assume that the measure $\mu$ satisfies {\rm (\ref{intro31})}, and let $\nu$ be
the measure defined by {\rm (\ref{intro59})}, where $\delta$ is a positive parameter.  
\begin{itemize}
\item[(i)]  The real entire function $z\mapsto G_{\nu}(\delta z)$ has exponential type at most $2\pi\delta$.
\item[(ii)]  For real $x\not= 0$ the function
\begin{equation}\label{intro61}
\lambda \mapsto e^{-\lambda |x|} - L\bigl(\delta^{-1}\lambda, \delta x\bigr)
\end{equation}
is nonnegative and integrable on $(0, \infty)$ with respect to $\mu$.  
\item[(iii)]  For all real $x$ we have
\begin{align}\label{intro62}
\begin{split}
0 \le f_{\mu}(x) - f_{\mu}\bigl(\delta^{-1}\bigr)&- G_{\nu}(\delta x) \\
	&= \int_0^{\infty} \big\{e^{-\lambda |x|} - L\bigl(\delta^{-1}\lambda, \delta x\bigr)\big\}\ \dmu.
\end{split}
\end{align}
\item[(iv)]  The nonnegative function $x\mapsto f_{\mu}(x) - f_{\mu}\bigl(\delta^{-1}\bigr) - G_{\nu}(\delta x)$ 
is integrable on $\R$, and
\begin{align}\label{intro63}
\begin{split}
\int_{-\infty}^{\infty} \big\{f_{\mu}(x) - f_{\mu}\bigl(\delta^{-1}\bigr)&- G_{\nu}(\delta x)\big\}\ \dx \\
     &= \int_0^{\infty}\big\{\tfrac{2}{\lambda} 
     		- \tfrac{1}{\delta}\csch\bigl(\tfrac{\lambda}{2\delta}\bigr)\big\}\ \dmu.
\end{split}
\end{align}
\item[(v)]  If $t\not= 0$ then
\begin{align}\label{intro64}
\begin{split}
\int_{-\infty}^{\infty} \big\{&f_{\mu}(x) - f_{\mu}\bigl(\delta^{-1}\bigr) - G_{\nu}(\delta x)\big\}e(-tx)\ \dx \\
	&= \int_0^{\infty} \frac{2\lambda}{\lambda^2 + 4\pi^2 t^2}\ \dmu
		- \delta^{-1} \int_0^{\infty} \tL\bigl(\delta^{-1}\lambda, \delta^{-1}t\bigr)\ \dmu.
\end{split}
\end{align}
\end{itemize}
\end{theorem}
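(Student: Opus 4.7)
The plan is to reduce Theorem \ref{thm1.3} to Theorem \ref{thm1.1} applied to the scaled measure $\nu$, translating each conclusion by means of the identity (\ref{intro60}). The key computational fact is the change of variables
\begin{equation*}
\int_0^{\infty} g(\lambda)\ \dnu = \int_0^{\infty} g(\lambda/\delta)\ \dmu,
\end{equation*}
valid for any nonnegative Borel function $g$ on $(0,\infty)$, which is immediate from $\nu(E) = \mu(\delta E)$. Applying this with $g(\lambda) = \lambda/(\lambda^2+1)$ shows that condition (\ref{intro31}) for $\mu$ implies (\ref{intro31}) for $\nu$, so Theorem \ref{thm1.1} is indeed available for $\nu$.

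Part (i) is then immediate: Theorem \ref{thm1.1}(i) gives $G_{\nu}(z)$ of exponential type at most $2\pi$, and the dilation $z \mapsto G_{\nu}(\delta z)$ multiplies the type by $\delta$. For (ii) I would apply Theorem \ref{thm1.1}(ii) to $\nu$ with argument $\delta x$: the function $\lambda \mapsto e^{-\lambda|\delta x|} - L(\lambda, \delta x)$ is nonnegative and $\nu$-integrable, and the substitution $\lambda \mapsto \lambda/\delta$ transforms the integrand into the one in (\ref{intro61}) and the measure into $\mu$. For (iii) I would apply Theorem \ref{thm1.1}(iii) to $\nu$ at $\delta x$, use $f_{\nu}(\delta x) = f_{\mu}(x) - f_{\mu}(\delta^{-1})$ from (\ref{intro60}) on the left, and the same $\lambda$-substitution on the right.

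For (iv) and (v) I would combine the $\lambda$-substitution with the spatial substitution $y = \delta x$ in the corresponding integrals from Theorem \ref{thm1.1}(iv) and (v). In (iv) the Jacobian $\delta$ from $\dy = \delta\,\dx$ and the transformation $\csch(\lambda/2) \mapsto \csch(\lambda/(2\delta))$ combine to yield the factor $\delta^{-1}\csch(\lambda/(2\delta))$ in (\ref{intro63}). In (v) one applies Theorem \ref{thm1.1}(v) at the point $\delta^{-1}t \neq 0$; after the substitutions the rational kernel $2(\lambda/\delta)\bigl((\lambda/\delta)^2 + 4\pi^2(\delta^{-1}t)^2\bigr)^{-1}$ simplifies to $2\lambda\delta\bigl(\lambda^2 + 4\pi^2 t^2\bigr)^{-1}$, while the term $\tL(\lambda, \delta^{-1}t)\ \dnu$ becomes $\tL(\delta^{-1}\lambda, \delta^{-1}t)\ \dmu$; the leading factor $\delta^{-1}$ from the spatial Jacobian then combines correctly to produce the form in (\ref{intro64}).

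The whole argument is scaling bookkeeping: nothing deeper is required once Theorem \ref{thm1.1} is in hand for $\nu$. The only point that demands attention is keeping the two independent rescalings straight — the spatial $x \mapsto \delta x$ (producing Jacobians $\delta^{\pm 1}$ and shifting the Fourier variable by $\delta^{-1}$) and the measure rescaling $\nu \mapsto \mu$ (producing the factor $\delta^{-1}$ inside the arguments $L(\delta^{-1}\lambda,\delta x)$ and $\tL(\delta^{-1}\lambda, \delta^{-1}t)$) — so that the final expressions match (\ref{intro62})--(\ref{intro64}) exactly.
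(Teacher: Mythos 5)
Your proposal is correct and matches the paper's proof exactly: the paper states that Theorem \ref{thm1.3} follows by applying Theorem \ref{thm1.1} to the pair $f_{\nu}$, $G_{\nu}$ and then transferring via the identity (\ref{intro60}) together with the change of variables induced by $\nu(E)=\mu(\delta E)$. You have simply made explicit the scaling computations that the paper leaves to the reader, and they check out.
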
  

Here is the analogous result for the problem of majorizing $f_{\mu}(x)$.  This is proved by applying 
Theorem \ref{thm1.2} to the functions $f_{\nu}(x)$ and $H_{\nu}(x)$, and then making the same change of 
variables that occurs in the proof of Theorem \ref{thm1.3}.

\begin{theorem}\label{thm1.4}  Assume that the measure $\mu$ satisfies {\rm (\ref{intro47})}, and let $\nu$ be the
measure defined by {\rm (\ref{intro59})}, where $\delta$ is a positive parameter.  
\begin{itemize}
\item[(i)]  The real entire function $z\mapsto H_{\nu}(\delta z)$ defined by {\rm (\ref{intro49})} has exponential 
type at most $2\pi\delta$.
\item[(ii)]  For all real $x$ the function
\begin{equation}\label{intro70}
\lambda \mapsto M\bigl(\delta^{-1}\lambda, \delta x\bigr) - e^{-\lambda |x|}
\end{equation}
is nonnegative and integrable on $(0, \infty)$ with respect to $\mu$.
\item[(iii)]  For all real $x$ we have
\begin{align}\label{intro71}
\begin{split}
0 \le H_{\nu}(\delta x) + f_{\mu}\bigl(\delta^{-1}\bigr)&- f_{\mu}(x) \\
	&= \int_0^{\infty} \big\{M\bigl(\delta^{-1}\lambda, \delta x\bigr) - e^{-\lambda |x|}\big\}\ \dmu.
\end{split}
\end{align}
\item[(iv)]  The nonnegative function $x\mapsto H_{\nu}(\delta x) + f\bigl(\delta^{-1}\bigr) - f_{\mu}(x)$ is 
integrable on $\R$, and
\begin{align}\label{intro72}
\begin{split}
\int_{-\infty}^{\infty} \big\{H_{\nu}(\delta x) + f\bigl(\delta^{-1}\bigr)&- f_{\mu}(x)\big\}\ \dx \\
		&= \int_0^{\infty}\big\{\tfrac{1}{\delta}\coth\bigl(\tfrac{\lambda}{2\delta}\bigr) 
			- \tfrac{2}{\lambda}\big\}\ \dmu.
\end{split}
\end{align}
\item[(v)]  If $t\not= 0$ then
\begin{align}\label{intro73}
\begin{split}
\int_{-\infty}^{\infty} \big\{&H_{\nu}(\delta x) + f\bigl(\delta^{-1}\bigr) - f_{\mu}(x)\big\}e(-tx)\ \dx \\
	&= \delta^{-1}\int_0^{\infty} \tM\bigl(\delta^{-1}\lambda, \delta^{-1} t\bigr)\ \dmu
		- \int_0^{\infty} \frac{2\lambda}{\lambda^2 + 4\pi^2 t^2}\ \dmu.
\end{split}
\end{align}
\end{itemize}
\end{theorem}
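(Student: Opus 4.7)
The plan is to derive Theorem \ref{thm1.4} from Theorem \ref{thm1.2} by a change of scale, exactly as indicated in the text preceding the statement. The first step is to verify that the dilated measure $\nu$ defined by (\ref{intro59}) satisfies the hypothesis (\ref{intro47}) whenever $\mu$ does. Using the identity $\int_0^\infty g(\lambda)\,\dnu = \int_0^\infty g(\delta^{-1}\lambda)\,\dmu$, this reduces to noting that $\int_0^\infty \lambda(\lambda + \delta)^{-1}\,\dmu$ is bounded by a constant multiple of $\int_0^\infty \lambda(\lambda + 1)^{-1}\,\dmu$, with the constant depending only on $\delta$. Since (\ref{intro47}) implies (\ref{intro31}), the functional identity (\ref{intro60}) derived in the text, namely $f_\nu(x) = f_\mu(\delta^{-1}x) - f_\mu(\delta^{-1})$, remains valid here.

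With these preparations, I would apply Theorem \ref{thm1.2} directly to $\nu$ in place of $\mu$, obtaining a real entire majorant $H_\nu$ of $f_\nu$ of type at most $2\pi$, together with the pointwise identity $H_\nu(y) - f_\nu(y) = \int_0^\infty\{M(\lambda,y) - e^{-\lambda|y|}\}\,\dnu$. Setting $y = \delta x$ and invoking (\ref{intro60}) turns the left side into $H_\nu(\delta x) + f_\mu(\delta^{-1}) - f_\mu(x)$, which gives part (i) (since type scales by $\delta$ under $z\mapsto \delta z$) and, after changing variables in the integral via $\int g(\lambda)\,\dnu = \int g(\delta^{-1}\lambda)\,\dmu$, produces (ii) and (iii). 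The nonnegativity and integrability assertions in (ii) transfer verbatim from the corresponding conclusions of Theorem \ref{thm1.2} after this substitution.

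For parts (iv) and (v), I would integrate the identity in (iii) over $\R$, substitute $y = \delta x$ (with $\dx = \delta^{-1}\dy$), and apply the corresponding statements from Theorem \ref{thm1.2}. This gives $\delta^{-1}\int_0^\infty\{\coth(\lambda/2) - 2/\lambda\}\,\dnu$ for the total mass, which after the $\lambda\mapsto\delta^{-1}\lambda$ change of variable in $d\nu$ collapses (the factor $\delta^{-1}$ combining with $2/(\delta^{-1}\lambda) = 2\delta/\lambda$) to the claimed expression in (\ref{intro72}). The Fourier-transform computation in (v) is the same substitution: the change of variable $y = \delta x$ introduces a factor $\delta^{-1}$ and shifts the frequency to $\delta^{-1}t$, and the rescaling of $d\nu$ then converts the remaining $\lambda^2 + 4\pi^2 \delta^{-2}t^2$ denominator into $\lambda^2 + 4\pi^2 t^2$ after multiplying by $\delta^2$, matching (\ref{intro73}).

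The only nontrivial step is bookkeeping: tracking the interaction between the dilation $y = \delta x$ on the spatial side and the dilation $\lambda \mapsto \delta^{-1}\lambda$ on the spectral/measure side, and checking that all factors of $\delta$ assemble correctly. There is no new analytic content beyond Theorem \ref{thm1.2}; the exchange of integrals needed to move between (ii), (iii), and (iv) is justified by the nonnegativity of the integrands together with the finiteness of $\int_0^\infty \{\coth(\lambda/2)-2/\lambda\}\,\dnu$ under hypothesis (\ref{intro47}) on $\mu$.
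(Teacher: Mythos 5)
Your proposal is correct and follows exactly the approach the paper indicates: apply Theorem~\ref{thm1.2} to $f_{\nu}$ and $H_{\nu}$, then use the relation \eqref{intro60} and the substitutions $y = \delta x$ and $\int g(\lambda)\,\dnu = \int g(\delta^{-1}\lambda)\,\dmu$ to rescale. Your preliminary check that $\nu$ inherits condition \eqref{intro47} from $\mu$, and your bookkeeping of the $\delta$-factors in (iv) and (v), are both accurate.
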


\noindent We note that each of the functions
\begin{equation*}\label{intro75}
t\mapsto \delta^{-1} \int_0^{\infty} \tL\bigl(\delta^{-1}\lambda, \delta^{-1}t\bigr)\ \dmu\quad\text{and}\quad
	t\mapsto \delta^{-1} \int_0^{\infty} \tM\bigl(\delta^{-1}\lambda, \delta^{-1}t\bigr)\ \dmu,
\end{equation*}
which occur in the statement of Theorem \ref{thm1.3} and Theorem \ref{thm1.4}, respectively, are continuous on 
$\R$ and supported on $[-\delta, \delta]$. 

As an example to illustrate how these results can be applied, we consider the problem of majorizing the function 
$x\mapsto \log |x|$ by a real entire function $z\mapsto U(z)$ of exponential type at most $2\pi$.  
This special case was first obtained by M.~Lerma \cite{L}.  We select $\mu$ to be a Haar measure on the 
multiplicative group $(0, \infty)$, so that 
\begin{equation}\label{intro80}
\mu(E) = \int_E \lambda^{-1}\ \dl
\end{equation}
for all Borel subsets $E$.  For this measure $\mu$ we find that
\begin{equation*}\label{intro81}
f_{\mu}(x) = - \log |x|.
\end{equation*}
We apply Theorem \ref{thm1.1} with $U(z) = - G_{\mu}(z)$.  Thus the function $U(z)$ is given by
\begin{align}\label{intro82}
\begin{split}
U(z) = \lim_{N\rightarrow\infty} \left(\frac{\cos \pi z}{\pi}\right)^2 
			\bigg\{\sum_{n=-N}^{N+1}&\frac{\log |n-\hh|}{(z - n + \hh)^2} \\
			&+ \sum_{n=-N}^{N+1}\frac{1}{(n - \hh)(z - n + \hh)}\bigg\},
\end{split}
\end{align}
where the limit converges uniformly on compact subsets of $\C$.  From Theorem \ref{thm1.1} we conclude that 
$U(z)$ is a real entire function of exponential type at most $2\pi$, and the inequality
\begin{equation}\label{intro83}
\log |x| \le U(x)
\end{equation}
holds for all real $x$.  From (\ref{intro42}) we get
\begin{equation}\label{intro84}
\int_{-\infty}^{\infty} \left\{U(x) - \log |x|\right\}\ \dx = \log 2.
\end{equation}
Using (\ref{intro43}), for $t\not= 0$ the Fourier transform is
\begin{equation}\label{intro85}
\int_{-\infty}^{\infty} \big\{U(x) - \log |x|\big\}e(-tx)\ \dx \\
	= \bigl(2|t|\bigr)^{-1} - \int_0^{\infty} \tL\bigl(\lambda, t\bigr)\lambda^{-1}\ \dl,
\end{equation}
where $\tL(\lambda, t)$ is given explicitly in Lemma \ref{lem3.3}.  Then Corollary \ref{cor3.1} implies that
\begin{equation}\label{intro86}
0 \le \int_{-\infty}^{\infty} \big\{U(x) - \log |x|\big\}e(-tx)\ \dx \le \bigl(2|t|\bigr)^{-1}
\end{equation}
for all real $t\not= 0$, and there is equality in the inequality on the right of (\ref{intro86}) for $1 \le |t|$.  
Further results and numerical approximations for the function $U(z)$ are given in \cite{L}.

In a similar manner Theorem \ref{thm1.3} can be applied to determine an entire function
of exponential type at most $2\pi\delta$ that majorizes $x\mapsto \log |x|$.  Alternatively, the functional
equation for the logarithm allows us to accomplish this directly.  Clearly the real entire function
\begin{equation*}\label{intro87}
z\mapsto -\log \delta + U(\delta z)
\end{equation*}
has exponential type at most $2\pi\delta$, majorizes $x\mapsto \log |x|$ on $\R$, and satisfies
\begin{equation}\label{intro88}
\int_{-\infty}^{\infty} \big\{-\log \delta + U(\delta x) - \log |x|\big\}\ \dx = \frac{\log 2}{\delta}.
\end{equation}

Another interesting application arises when we choose measures $\mu_{\sigma}$ such that 
\begin{equation}\label{Intro26.1}
\mu_{\sigma}(E) = \int_E \lambda^{-\sigma}\ \dl,
\end{equation}
for all Borel subsets $E\subseteq (0,\infty)$.  For $0 < \sigma < 2$ the measure $\mu_{\sigma}$ satisfies the 
condition (\ref{intro31}), and it satisfies (\ref{intro47}) if and only if $1 < \sigma < 2$. Observing that
\begin{align}\label{Intro27.1}
\begin{split}
f_{\mu_{\sigma}}(x) & = \int_0^{\infty} \big\{e^{-\lambda |x|} - e^{-\lambda}\big\}\ \lambda^{-\sigma}\ \dl \\
                    & = \Gamma(1 - \sigma)\big\{|x|^{\sigma - 1} - 1\big\}\, ,\quad\text{if $\sigma\not= 1$,}
\end{split}
\end{align}
one can apply Theorem \ref{thm1.3} and Theorem \ref{thm1.4} (in the case $1< \sigma<2$) to find the extremals of exponential type for the even function $x \mapsto |x|^{\sigma-1}$ where $0< \sigma < 2$ and $\sigma \neq 1$. We will return to these examples in section 7.

Our results can also be used to majorize and minorize certain real valued periodic functions by trigonometric 
polynomials.  This is accomplished by applying the Poisson summation formula to the functions that occur
in the inequality (\ref{intro6}), and then integrating the parameter $\lambda$ with respect to a measure $\mu$.
We give a general account of this method in section 6.
For example, if $\mu$ is the Haar measure defined by (\ref{intro80}), we obtain extremal trigonometric polynomials that 
majorize the periodic function $x\mapsto \log \bigl|1 - e(x)\bigr|$.  Here is the precise result.

\begin{theorem}\label{thm1.5}  Let $N$ be a nonnegative integer.  Then there exists a real valued trigonometric
polynomial
\begin{equation}\label{intro90}
u_N(x) = \sum_{n=-N}^N \tu_N(n) e(nx),
\end{equation}
such that
\begin{equation}\label{intro91}
\log \bigl|1 - e(x)\bigr| \le u_N(x)
\end{equation}
at each point $x$ in $\R/\Z$,
\begin{equation}\label{intro92}
\frac{\log 2}{N+1} = \int_{\R/\Z} u_N(x)\ \dx,
\end{equation}
and
\begin{equation}\label{intro93}
-\frac{1}{2|n|} \le \tu_N(n) \le 0
\end{equation}
for each integer $n$ with $1 \le |n| \le N$.  If $\widetilde{u}(x)$ is a real trigonometric polynomial of degree at 
most $N$ such that
\begin{equation*}\label{intro94}
\log \bigl|1 - e(x)\bigr| \le \widetilde{u}(x)
\end{equation*}
at each point $x$ in $\R/\Z$, then
\begin{equation}\label{intro95}
\frac{\log 2}{N+1} \le \int_{\R/\Z} \widetilde{u}(x)\ \dx.
\end{equation}
Moreover, there is equality in the inequality {\rm (\ref{intro95})} if and only if $\widetilde{u}(x) = u_N(x)$.
\end{theorem}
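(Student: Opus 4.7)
The strategy is to construct $u_N$ as a Poisson-summation correction of the scaled entire majorant of $\log|x|$, and prove extremality via an exact Gauss-type quadrature rule on trigonometric polynomials of degree $\le N$. Set $\delta = N+1$ and let $V(z) = -\log\delta + U(\delta z)$, which by (\ref{intro88}) majorizes $\log|x|$, has exponential type $2\pi\delta$, and satisfies $\int_\R(V(x)-\log|x|)\,\dx = \log 2/(N+1)$. The function $F(x) := V(x) - \log|x|$ is nonnegative and integrable. Define
\begin{equation*}
u_N(x) := \log|1 - e(x)| + \sum_{n \in \Z} F(x+n),
\end{equation*}
so that (\ref{intro91}) follows immediately from $F \ge 0$.

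To verify that $u_N$ is a real trigonometric polynomial of degree at most $N$, apply Poisson summation. Because $V$ is a tempered distribution of exponential type $2\pi\delta$, Paley--Wiener gives $\supp\widehat V \subseteq [-\delta,\delta]$; combining with the distributional Fourier transform of $\log|x|$ (which equals $-1/(2|t|)$ on $\R\setminus\{0\}$), one finds that the continuous function $\widehat F$ coincides with $1/(2|t|)$ for $|t| > \delta$. Together with the Fourier expansion $-\log|1-e(x)| = \tfrac12\sum_{k\ne 0}|k|^{-1}e(kx)$, Poisson's identity $\sum_n F(x+n) = \sum_k \widehat F(k)\,e(kx)$ yields
\begin{equation*}
u_N(x) = \widehat F(0) + \sum_{1 \le |k| \le \delta}\Bigl(\widehat F(k) - \tfrac{1}{2|k|}\Bigr) e(kx).
\end{equation*}
At $|k| = \delta$, the change of variable $y = \delta x$ gives $\widehat F(k) = \delta^{-1}\widehat{(U - \log|\cdot|)}(k/\delta)$, which by the equality case ``$|t| \ge 1$'' of (\ref{intro86}) equals exactly $1/(2|k|)$; hence the degree-$\delta$ coefficients vanish and $\deg u_N \le N$. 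Properties (\ref{intro92}) and (\ref{intro93}) now follow: $\tu_N(0) = \widehat F(0) = \int F = \log 2/(N+1)$, and for $1 \le |k| \le N$ the sandwich $\widehat F(k) \in [0, 1/(2|k|)]$ from (\ref{intro86}) gives $\tu_N(k) = \widehat F(k) - 1/(2|k|) \in [-1/(2|k|), 0]$.

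For extremality (\ref{intro95}) and uniqueness, use the exact Gauss-type quadrature
\begin{equation*}
\int_{\R/\Z} p(x)\,\dx = \frac{1}{N+1}\sum_{k=0}^N p(x_k),\qquad x_k := \frac{2k+1}{2(N+1)},
\end{equation*}
valid for every trigonometric polynomial $p$ of degree $\le N$. The values $e(x_k)$ are precisely the $(N+1)$-th roots of $-1$; evaluating $Y^{N+1}+1 = \prod_k(Y - e(x_k))$ at $Y = 1$ gives $\prod_k(1 - e(x_k)) = 2$, so $\sum_k \log|1-e(x_k)| = \log 2$. For any trigonometric polynomial $\widetilde u$ of degree $\le N$ with $\widetilde u \ge \log|1-e(\cdot)|$,
\begin{equation*}
\int_{\R/\Z} \widetilde u\,\dx = \frac{1}{N+1}\sum_{k=0}^N \widetilde u(x_k) \ge \frac{1}{N+1}\sum_{k=0}^N \log|1-e(x_k)| = \frac{\log 2}{N+1},
\end{equation*}
proving (\ref{intro95}). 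Equality forces $\widetilde u(x_k) = \log|1-e(x_k)|$ for each $k$, and the nonnegativity of $\widetilde u - \log|1-e(\cdot)|$ then forces matching first derivatives at the $x_k$ as well; the resulting $2(N+1)$ Hermite conditions, consistent by the existence of $u_N$, determine $\widetilde u = u_N$ uniquely, since the difference of two such solutions would be a degree-$N$ trigonometric polynomial with $2N+2$ zeros counted with multiplicity, forcing it to vanish identically.

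The principal technical obstacle is the distributional Fourier analysis needed to show that the ``high-frequency tail'' $\sum_{|k|>N}\widehat F(k) e(kx)$ of the Poisson sum combines with $-\log|1-e(x)|$ to cancel exactly, leaving a bona fide trigonometric polynomial of degree $\le N$; this rests on Paley--Wiener applied to the non-$L^2$ distribution $V$ and, crucially, on the sharp equality case of (\ref{intro86}) to kill the degree-$\delta$ term and drop the degree from $\delta$ to $N$.
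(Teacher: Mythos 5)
Your proposal is correct, and it takes a genuinely different route from the paper. The paper derives Theorem~\ref{thm1.5} as a corollary of its general $\mu$-measure machinery for periodic minorants: it builds $l(\lambda,N;\cdot)$ by Poisson-summing the Beurling--Selberg minorant $L(\lambda/\delta,\delta\,\cdot\,)$ of $e^{-\lambda|\cdot|}$ (Theorem~\ref{thm6.1}), then integrates over $\lambda$ against the Haar measure $d\lambda/\lambda$ to get $g_\mu(N;\cdot)$ and sets $u_N=-g_\mu(N;\cdot)$, reading off~(\ref{intro92}) from~(\ref{ef61}) and~(\ref{intro93}) from~(\ref{ef62}) together with Corollary~\ref{cor3.1}. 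You instead perform the $\lambda$-integration first (this is exactly what produces the entire majorant $U$ of $\log|x|$), then rescale to $V(z)=-\log\delta+U(\delta z)$ and Poisson-sum the nonnegative integrable function $F=V-\log|\cdot|$. The degree reduction from $\delta=N+1$ to $N$ comes from the sharp equality case $\widehat{(U-\log|\cdot|)}(t)=1/(2|t|)$ for $|t|\ge 1$ in~(\ref{intro86}), while the paper gets the same vanishing automatically because $\tL(\lambda,\cdot)$ is supported on $[-1,1]$. The two constructions produce the same $u_N$ (as one can verify via~(\ref{intro62}) with Haar measure), but your presentation is more self-contained, bypassing the general trigonometric machinery of Section~6. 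Your extremality argument is the same exact quadrature at the half-integer points $x_k=(k+\tfrac12)/(N+1)$ used in Theorem~\ref{thm6.1}, and your explicit computation $\prod_k(1-e(x_k))=2$ is a nice touch that short-circuits the appeal to interpolation values.

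Two minor technical points worth noting. First, your appeal to Paley--Wiener--Schwartz for $V$ as a tempered distribution implicitly uses that $U$ (hence $V$) has polynomial growth on $\R$; this follows from the paper's Lemma~\ref{lem2.6}, but is not automatic from exponential type alone. Actually, the distributional detour is avoidable: the identity $\widehat F(t)=1/(2|t|)$ for $|t|\ge\delta$ is already a direct consequence of~(\ref{intro85})--(\ref{intro86}) after rescaling, which you do also cite. Second, the pointwise Poisson summation $\sum_n F(x+n)=\sum_k\widehat F(k)e(kx)$ requires a word of justification since $F$ has a singularity at $0$ and is not entire (so Plancherel--P\'olya does not apply directly); the cleanest fix is to note that the trigonometric polynomial you define and the a.e.-defined periodization $\sum_n F(x+n)+\log|1-e(x)|$ are both integrable on $\R/\Z$ with identical Fourier coefficients, hence equal a.e.\ and therefore everywhere by continuity. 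Neither point affects the validity of the argument.
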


In section 8 we use (\ref{intro91}) to prove an analogue of the Erd\"os-Tur\'an inequality
for the supremum norm of an algebraic polynomial on the closed unit disk.


\section{Growth estimates in the complex plane}

Let $\rr = \{z\in\C: 0<\Re(z)\}$ denote the open right half plane.  Throughout this section we work with a function 
$\Phi(z)$ that is analytic on $\rr$ and satisfies the following conditions: If $0 < a < b < \infty$ then
\begin{equation}\label{cv10}
\lim_{y\rightarrow \pm\infty} e^{-2\pi |y|} \int_a^b \left|\frac{\Phi(x+iy)}{x+iy}\right|\ \dx = 0,
\end{equation}
if $0 < \eta < \infty$ then
\begin{equation}\label{cv11}
\sup_{\eta \le x} \int_{-\infty}^{\infty} \left|\frac{\Phi(x+iy)}{x+iy}\right| e^{-2\pi |y|}\ \dy < \infty,
\end{equation}
and
\begin{equation}\label{cv12}
\lim_{x\rightarrow \infty} \int_{-\infty}^{\infty} \left|\frac{\Phi(x+iy)}{x+iy}\right| e^{-2\pi |y|}\ \dy = 0.
\end{equation}

\begin{lemma}\label{lem2.1}  Assume that the analytic function $\Phi:\rr\rightarrow \C$ satisfies the 
conditions {\rm (\ref{cv10})}, {\rm (\ref{cv11})}, and {\rm (\ref{cv12})}, and let $0 < \delta$.  Then there 
exists a positive number $c(\delta,\Phi)$, depending only on $\delta$ and $\Phi$, such that the inequality
\begin{equation}\label{cv13}
|\Phi(z)| \le c(\delta,\Phi) |z| e^{2\pi |y|}
\end{equation}
holds for all $z = x+iy$ in the closed half plane $\{z\in\C: \delta\le \Re(z)\}$.
\end{lemma}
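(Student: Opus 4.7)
My plan is to pass to the auxiliary function $F(z) = \Phi(z)/z$, which is holomorphic on $\rr$, and observe that \eqref{cv13} is equivalent to the estimate
\begin{equation*}
|F(z)| \le c(\delta, \Phi)\, e^{2\pi|y|} \quad \text{for } z = x + iy \text{ with } x \ge \delta.
\end{equation*}
Only hypothesis \eqref{cv11} appears to be needed for this lemma; conditions \eqref{cv10} and \eqref{cv12} look designed for Cauchy-type contour arguments in later results.

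The key idea is to invoke the two-dimensional mean value property for a holomorphic function on a disc. Fix $\eta = \delta/2$ and $r = \delta/4$, and set
\begin{equation*}
M_\eta = \sup_{\eta \le x} \int_{-\infty}^{\infty} \left|\frac{\Phi(x+iy)}{x+iy}\right| e^{-2\pi|y|}\, \dy,
\end{equation*}
which is finite by \eqref{cv11}. For any $z_0 = x_0 + iy_0$ with $x_0 \ge \delta$, the closed disc $\{w : |w - z_0| \le r\}$ lies inside $\{\Re w \ge \delta - r\} \subset \rr$, so $F$ is holomorphic on a neighborhood of it and the area mean value property gives
\begin{equation*}
F(z_0) = \frac{1}{\pi r^2} \iint_{|w - z_0| \le r} F(w)\, \du\, \dv,
\end{equation*}
writing $w = u + iv$.

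Since every $w = u + iv$ in this disc satisfies $|v| \le |y_0| + r$ and $u \in [x_0 - r,\, x_0 + r] \subset [\eta, \infty)$, inserting the factor $e^{-2\pi|v|} \cdot e^{2\pi|v|}$ into the integrand, enlarging the $v$-range to all of $\R$, and applying \eqref{cv11} to the inner integral yields
\begin{equation*}
|F(z_0)| \le \frac{e^{2\pi(|y_0|+r)}}{\pi r^2} \int_{x_0-r}^{x_0+r} \int_{-\infty}^{\infty} \left|\frac{\Phi(u+iv)}{u+iv}\right| e^{-2\pi|v|}\, \dv\, \du \le \frac{2 M_\eta\, e^{2\pi r}}{\pi r}\, e^{2\pi|y_0|}.
\end{equation*}
Setting $c(\delta, \Phi) = 2 M_\eta e^{2\pi r}/(\pi r)$ and multiplying through by $|z_0|$ produces \eqref{cv13}.

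I do not foresee a serious obstacle: once $F = \Phi/z$ is introduced, the estimate reduces to a routine disc-averaging calculation. The only detail to verify is that the radius $r = \delta/4$ is chosen uniformly in $z_0$ so that the resulting constant carries no hidden dependence on the point being bounded, which is manifestly the case.
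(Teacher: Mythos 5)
Your proof is correct, and it takes a genuinely different route from the paper's. The paper proves the bound by writing $\Phi(z)/z$ via Cauchy's integral formula over a rectangular contour $\Gamma(z,\eta,T)$ with the auxiliary kernel $\bigl(\cos\pi(w-z)\bigr)^{-2}$; that kernel supplies the $e^{2\pi|y|}$ growth after using the elementary bound $|\cos\pi(w-z)|^{-2}\le 4e^{2\pi(|y|-|v|)}$, and hypothesis (\ref{cv10}) is needed to kill the horizontal edges of the contour as $T\to\infty$. You instead apply the area mean value property to $F=\Phi/z$ on a disc of fixed radius $r=\delta/4$, insert $e^{-2\pi|v|}e^{2\pi|v|}$ by hand, bound $e^{2\pi|v|}\le e^{2\pi(|y_0|+r)}$ on the disc, and then enlarge to the infinite vertical strip to invoke (\ref{cv11}). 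This is more elementary (no contour integration, no residue bookkeeping for the cosine zeros) and, as you correctly observe, uses only (\ref{cv11}); your proof thus establishes Lemma \ref{lem2.1} under a weaker hypothesis than the paper's argument does. The constants you obtain differ from the paper's, but both depend only on $\delta$ and $\Phi$, which is all the lemma asserts. The one cosmetic point worth noting: with $r=\delta/4$ your constant contains a factor $e^{\pi\delta/2}$ that grows in $\delta$, which is harmless for the statement but would be avoided by taking $r=\min\{\tfrac14,\delta/4\}$ as the paper does for $\eta$; since the half-planes are nested in $\delta$ this costs nothing.
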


\begin{proof}  Write $\eta = \min\{\frac14, \h \delta\}$, and set
\begin{equation*}
c_1(\eta, \Phi) = \sup\left\{\int_{-\infty}^{\infty}
			\left|\frac{\Phi(u+iv)}{u+iv}\right|e^{-2\pi|v|}\ \dv : \eta \le u \right\}.
\end{equation*}
Then $c_1(\eta, \Phi)$ is finite by (\ref{cv11}).  Let $z = x+iy$ satisfy
$\delta \le \Re(z)$ and let $T$ be a positive real parameter such that $|y|+ \eta < T$.  Then write 
$\Gamma(z, \eta, T)$ for the simply connected, positively oriented, rectangular path connecting the
points $x-\eta - iT, x+\eta - iT, x+\eta + iT, x-\eta + iT,$ 
and $x-\eta - iT$.  From Cauchy's integral formula we have
\begin{equation}\label{cv14}
\frac{\Phi(z)}{z} = \frac{1}{2\pi i}\int_{\Gamma(z, \eta, T)}
			\frac{\Phi(w)}{w(w - z)\bigl(\cos \pi (w-z)\bigr)^2}\ \dw.
\end{equation}
At each point $w = u + iv$ on the path $\Gamma(z, \eta, T)$ we find that
\begin{equation}\label{cv15}
\eta \le |w - z|
\end{equation}
and 
\begin{align}\label{cv16}
\begin{split}
\frac{1}{|\cos \pi (w-z)|^2} &= \frac{2}{\bigl(\cos 2\pi (u-x) + \cosh 2\pi (v-y)\bigr)} \\
	&\le \frac{2}{\bigl(\cosh 2\pi (v-y)\bigr)} \\
	&\le 4e^{-2\pi |v-y|} \le 4e^{2\pi (|y| - |v|)}.
\end{split}
\end{align}
Using these estimates and (\ref{cv10}) we get
\begin{align}\label{cv17}
\begin{split}
\limsup_{T\rightarrow \infty}\ \Bigg|\int_{x-\eta\pm iT}^{x+\eta\pm iT}
                        &\frac{\Phi(w)}{w(w - z)\bigl(\cos \pi (w-z)\bigr)^2}\ \dw\Bigg| \\
	&\le \limsup_{T\rightarrow \infty} 4 \eta^{-1} e^{2\pi (|y| - T)} 
			 \int_{x-\eta}^{x+\eta}\left|\frac{\Phi(u\pm iT)}{u\pm iT}\right|\ \du \\
	&= 0.
\end{split}
\end{align}
It follows from (\ref{cv14}) and (\ref{cv17}) that
\begin{align}\label{cv18}
\begin{split}
\frac{\Phi(z)}{z} &= \frac{1}{2\pi i}\int_{x+\eta-i\infty}^{x+\eta+i\infty} 
				\frac{\Phi(w)}{w(w - z)\bigl(\cos \pi (w-z)\bigr)^2}\ \dw \\
	&\qquad\qquad -\frac{1}{2\pi i} \int_{x-\eta-i\infty}^{x-\eta+i\infty}
				\frac{\Phi(w)}{w(w - z)\bigl(\cos \pi (w-z)\bigr)^2}\ \dw.
\end{split}
\end{align}
By appealing to (\ref{cv15}) and (\ref{cv16}) again we find that
\begin{align}\label{cv19}
\begin{split}
\Bigg|\int_{x\pm \eta-i\infty}^{x\pm \eta+i\infty}
				&\frac{\Phi(w)}{w(w - z)\bigl(\cos \pi (w-z)\bigr)^2}\ \dw\Bigg|  \\
	&\le 4 \eta^{-1} e^{2\pi |y|}\int_{-\infty}^{\infty} 
			\left|\frac{\Phi(x\pm \eta +iv)}{x\pm \eta +iv}\right|e^{-2\pi |v|}\ \dv \\
	&\le 4 c_1(\eta, \Phi)\eta^{-1} e^{2\pi |y|}.
\end{split}
\end{align}
Combining (\ref{cv18}) and (\ref{cv19}) leads to the estimate
\begin{equation*}\label{cv20}
\left|\frac{\Phi(z)}{z}\right| \le 4(\pi \eta)^{-1} c_1(\eta, \Phi) e^{2\pi |y|},
\end{equation*}
and this plainly verifies (\ref{cv13}) with $c(\delta,\Phi) = 4(\pi \eta)^{-1}c_1(\eta,\Phi)$.
\end{proof}

Let $w=u+iv$ be a complex variable.  From (\ref{cv11}) we find that for each positive real number 
$\beta$ such that $\beta - \h$ is not an integer, and each complex number $z$ with $|\Re(z)|\not=\beta$, the function
\begin{equation*}\label{cv21}
w \mapsto \Bigl(\frac{\cos \pi z}{\cos \pi w}\Bigr)^2\Bigl(\frac{2w}{z^2 - w^2}\Bigr)\Phi(w)
\end{equation*}
is integrable along the vertical line $\Re(w) = \beta$.  We define a 
complex valued function $z\mapsto I(\beta, \Phi; z)$ on each component of the open set 
\begin{equation}\label{open}
\{z\in \C: \bigl|\Re(z)\bigr| \not= \beta\},
\end{equation}
by
\begin{equation}\label{cv22}
I(\beta, \Phi; z) = \frac{1}{2\pi i}\int_{\beta-i\infty}^{\beta+i\infty}\Bigl(\frac{\cos \pi z}{\cos \pi w}\Bigr)^2
				\Bigl(\frac{2w}{z^2 - w^2}\Bigr)\Phi(w)\ \dw.
\end{equation}
It follows using Morera's theorem that $z \mapsto I(\beta, \Phi; z)$ is analytic in each of the three components.  

In a similar manner we find that for each positive real number $\beta$ such that $\beta$ is not an integer, and each
complex number $z$ with $|\Re(z)|\not=\beta$, the function
\begin{equation*}\label{cv23}
w \mapsto \Bigl(\frac{\sin \pi z}{\sin \pi w}\Bigr)^2\Bigl(\frac{2w}{z^2 - w^2}\Bigr)\Phi(w)
\end{equation*}
is integrable along the vertical line $\Re(w) = \beta$.  We define a 
complex valued function $z\mapsto J(\beta, \Phi; z)$ on each component of the open set (\ref{open}) by
\begin{equation}\label{cv24}
J(\beta, \Phi; z) = \frac{1}{2\pi i}\int_{\beta-i\infty}^{\beta+i\infty}\Bigl(\frac{\sin \pi z}{\sin \pi w}\Bigr)^2
	\Bigl(\frac{2w}{z^2 - w^2}\Bigr)\Phi(w)\ \dw.
\end{equation}
Again Morera's theorem can be used to show that $J(\beta, \Phi; z)$ is analytic in each of the three components.

Next we prove a simple estimate for $I(\beta, \Phi; z)$ and $J(\beta, \Phi; z)$.

\begin{lemma}\label{lem2.2}
Assume that the analytic function $\Phi:\rr \rightarrow \C$ satisfies the conditions
{\rm (\ref{cv10})}, {\rm (\ref{cv11})}, and {\rm (\ref{cv12})}.  Let $\beta$ be a positive real number,
$z = x + iy$ a complex number such that $|\Re(z)|\not=\beta$, and write
\begin{equation}\label{cv25}
B(\beta, \Phi) = \frac{4}{\pi} 
	\int_{-\infty}^{+\infty} \left|\frac{\Phi(\beta + iv)}{\beta + iv}\right|e^{-2\pi|v|}\ \dv.
\end{equation}
If $\beta - \h$ is not an integer then
\begin{equation}\label{cv26}
|I(\beta, \Phi; z)| \le B(\beta, \Phi) \sec^2 \pi\beta 
	\left(1 + \frac{|z|}{\bigl||x|-\beta\bigr|}\right)e^{2\pi |y|}.
\end{equation}
If $\beta$ is not an integer then
\begin{equation}\label{cv27}
|J(\beta, \Phi; z)| \le B(\beta, \Phi) \csc^2 \pi\beta
	\left(1 + \frac{|z|}{\bigl||x|-\beta\bigr|}\right)e^{2\pi |y|}.
\end{equation}
\end{lemma}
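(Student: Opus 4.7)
The plan is to bound the integrands in (\ref{cv22}) and (\ref{cv24}) pointwise on the contour $\Re(w) = \beta$, then integrate against the weight that defines $B(\beta, \Phi)$.  Parametrize $w = \beta + iv$ (so $dw = i\,dv$) and rewrite the integrand in (\ref{cv22}) as
$\bigl|\cos\pi z / \cos\pi w\bigr|^2 \cdot \bigl|2w^2/(z^2-w^2)\bigr| \cdot |\Phi(w)|/|w|$, with the analogous expression using $\sin$ for (\ref{cv24}).  Two nontrivial ingredients remain: a sharp bound on the trigonometric quotient and a sharp bound on the rational factor.

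For the first ingredient I would use the elementary identity $|\cos(a+ib)|^2 = \cos^2 a + \sinh^2 b$.  Applied to $z = x+iy$ and $w = \beta + iv$, it yields $|\cos\pi z|^2 \le \cosh^2(\pi y) \le e^{2\pi|y|}$ and $|\cos\pi w|^2 \ge \cos^2(\pi\beta)\cosh^2(\pi v)$ (the gap in the last inequality is exactly $\sin^2(\pi\beta)\sinh^2(\pi v) \ge 0$).  Combined with $\cosh^2(\pi v) \ge \tfrac14 e^{2\pi|v|}$, this gives $|\cos\pi z/\cos\pi w|^2 \le 4\sec^2(\pi\beta)\, e^{2\pi|y|} e^{-2\pi|v|}$.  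The identical calculation using $|\sin(a+ib)|^2 = \sin^2 a + \sinh^2 b$ produces the sine version with $\csc^2(\pi\beta)$ in place of $\sec^2(\pi\beta)$.

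For the rational factor, the key algebraic identity is $\frac{2w^2}{z^2 - w^2} = -2 + \frac{2z^2}{z^2 - w^2}$, so it suffices to bound $|z|^2/|z^2-w^2|$.  On one hand $|z\pm w| \ge |x\pm\beta| \ge \bigl||x|-\beta\bigr|$ (a one-line case analysis on the sign of $x$), so both $|z-w|$ and $|z+w|$ are bounded below by $\bigl||x|-\beta\bigr|$.  On the other hand, the identity $2z = (z+w)+(z-w)$ together with the triangle inequality forces $\max(|z-w|,|z+w|) \ge |z|$.  Multiplying these two lower bounds (using each on a different factor) gives $|z^2-w^2| = |z-w||z+w| \ge \bigl||x|-\beta\bigr| \cdot |z|$, and therefore
\[
\left|\frac{2w^2}{z^2 - w^2}\right| \le 2 + \frac{2|z|^2}{|z^2 - w^2|} \le 2\left(1 + \frac{|z|}{\bigl||x|-\beta\bigr|}\right).
\]

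Inserting both pointwise bounds into (\ref{cv22}), the factor $e^{-2\pi|v|}$ pairs with $|\Phi(\beta+iv)/(\beta+iv)|$ to reconstruct the integral in the definition (\ref{cv25}) of $B(\beta, \Phi)$, and the numerical constants collapse as $\tfrac{1}{2\pi}\cdot 4 \cdot 2 = \tfrac{4}{\pi}$, exactly matching the normalization.  This yields (\ref{cv26}), and the same argument with sines gives (\ref{cv27}).  The main obstacle is avoiding an extraneous power of $\bigl||x|-\beta\bigr|$ in the denominator: the $|z|^2$ that appears in $2z^2/(z^2-w^2)$ would cost such a power if one only used the trivial estimate $|z^2-w^2| \ge \bigl||x|-\beta\bigr|^2$, which is why the $\max(|z-w|,|z+w|) \ge |z|$ observation is essential for getting the bound linear in $|z|/\bigl||x|-\beta\bigr|$ rather than quadratic.
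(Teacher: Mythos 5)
Your proof is correct and follows essentially the same route as the paper's: the same decomposition of the rational factor via $w^2/(z^2-w^2) = -1 + z^2/(z^2-w^2)$, the same pairing of $\min\{|z-w|,|z+w|\}\ge\bigl||x|-\beta\bigr|$ with $\max\{|z-w|,|z+w|\}\ge|z|$ to get a bound linear in $|z|/\bigl||x|-\beta\bigr|$, and the same elementary inequality $|\cos\pi(\beta+iv)|^{-2}\le 4e^{-2\pi|v|}\sec^2\pi\beta$ (which the paper states without proof and you helpfully derive from $|\cos(a+ib)|^2=\cos^2 a+\sinh^2 b$). The constants track through identically.
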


\begin{proof}  
On the vertical line $\Re(w)=\beta$ we have
\begin{equation*}\label{cv28}
\bigl||x|-\beta \bigr|\le \min\{|z-w|,|z+w|\}
\end{equation*}
and
\begin{equation*}\label{cv29}
|z|\le \hh|z-w|+\hh|z+w|\le \max\{|z-w|,|z+w|\},
\end{equation*}
and therefore
\begin{align}\label{cv30}
\begin{split}
\Bigl|\frac{w^2}{z^2-w^2}\Bigr| &\le 1 + \Bigl|\frac{z^2}{z^2-w^2}\Bigr| \\
	&= 1 + |z|^2\big(\min\{|z-w|,|z+w|\}\max\{|z-w|,|z+w|\}\bigr)^{-1} \\
	&\le 1 + \frac{|z|}{\bigl||x|-\beta\bigr|}.
\end{split}
\end{align}
On the line $\Re(w)=\beta$ we also use the elementary inequality
\begin{equation}\label{cv31}
|\cos \pi (\beta + iv)|^{-2} \le 4 e^{-2\pi |v|} \sec^2 \pi\beta.
\end{equation}
Then we use (\ref{cv30}) and (\ref{cv31}) to estimate the integral on the right of (\ref{cv22}).  The bound
(\ref{cv26}) follows easily.

The proof of (\ref{cv27}) is very similar.
\end{proof}

For each positive number $\xi$ we define an even rational function $z\mapsto \A(\xi, \Phi; z)$ 
on $\C$ by
\begin{align}\label{cv32}
\begin{split}
\A(\xi, \Phi; z) &= \Phi(\xi)(z - \xi)^{-2} + \Phi^{\prime}(\xi)(z - \xi)^{-1} \\ 
	  &\qquad + \Phi(\xi)(z + \xi)^{-2} - \Phi^{\prime}(\xi)(z + \xi)^{-1}.
\end{split}
\end{align}

\begin{lemma}\label{lem2.3}  
Assume that the analytic function $\Phi:\rr \rightarrow \C$ satisfies the conditions
{\rm (\ref{cv10})}, {\rm (\ref{cv11})}, and {\rm (\ref{cv12})}.  Then the sequence of entire functions
\begin{equation}\label{cv33}
\Bigl(\frac{\cos \pi z}{\pi}\Bigr)^2\sum_{n=1}^N\A(n - \hh, \Phi; z),\ \text{where}\ N = 1, 2, 3, \dots ,
\end{equation}
converges uniformly on compact subsets of $\C$ as $N\rightarrow \infty$, and therefore
\begin{equation}\label{cv34}
\G(\Phi, z) = \lim_{N\rightarrow \infty}\Bigl(\frac{\cos \pi z}{\pi}\Bigr)^2 \sum_{n=1}^N \A(n - \hh, \Phi; z)
\end{equation}
defines an entire function.  
Also, the sequence of entire functions
\begin{equation}\label{cv35}
\Bigl(\frac{\sin \pi z}{\pi}\Bigr)^2\sum_{n=1}^{N}\A(n, \Phi; z),\ \text{where}\ N = 1, 2, 3, \dots ,
\end{equation}
converges uniformly on compact subsets of $\C$ as $N\rightarrow \infty$, and therefore
\begin{equation}\label{cv36}
\sH(\Phi, z) = \lim_{N\rightarrow \infty}\Bigl(\frac{\sin \pi z}{\pi}\Bigr)^2 \sum_{n=1}^{N} \A(n, \Phi; z)
\end{equation}
defines an entire function.  
\end{lemma}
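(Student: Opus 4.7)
The plan is to identify the partial sums in (\ref{cv33}) and (\ref{cv35}) with differences of the contour integrals $I(\beta,\Phi;z)$ and $J(\beta,\Phi;z)$ from (\ref{cv22}) and (\ref{cv24}) via the residue theorem, and then to use Lemma \ref{lem2.2} to force one of the two integrals to zero. I describe the argument for $\G(\Phi,z)$; the argument for $\sH(\Phi,z)$ is parallel, with $(\cos\pi w)^{-2}$ replaced by $(\sin\pi w)^{-2}$ and half-integer shifts interchanged with integer shifts.

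The first step is the residue calculation at $w = m - \h$ for positive integers $m$. Since $\sin^{2}\pi(m-\h) = 1$, the Laurent expansion $(\cos\pi w)^{-2} = \pi^{-2}(w-(m-\h))^{-2} + O(1)$ holds at $w = m-\h$. Because the residue of $(w-\xi)^{-2}g(w)$ at a double pole equals $g'(\xi)$, and since the partial-fraction identity $\frac{2w}{z^{2}-w^{2}} = \frac{1}{z-w} - \frac{1}{z+w}$ yields
\begin{equation*}
\frac{d}{dw}\Bigl[\frac{2w\Phi(w)}{z^{2}-w^{2}}\Bigr]_{w=\xi}
 = \frac{\Phi(\xi)}{(z-\xi)^{2}} + \frac{\Phi'(\xi)}{z-\xi} + \frac{\Phi(\xi)}{(z+\xi)^{2}} - \frac{\Phi'(\xi)}{z+\xi} = \A(\xi,\Phi;z),
\end{equation*}
the residue in question equals $\pi^{-2}\cos^{2}\pi z \cdot \A(m-\h,\Phi;z)$.

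The second step is the contour shift. Fix a compact set $K\subset\C$ and choose $\beta_{0} > \sup_{z\in K}|\Re(z)|$ with $\beta_{0} - \h \notin \Z$. For any integer $N > \beta_{0}$ and large $T$, I apply Cauchy's theorem to the positively oriented rectangle with corners $\beta_{0}\pm iT$ and $N\pm iT$. The two horizontal segments vanish as $T\to\infty$ by (\ref{cv10}), since $(\cos\pi w)^{-2}$ contributes a factor of size $O(e^{-2\pi T})$; the two vertical pieces converge absolutely by (\ref{cv11}). Collecting residues at $w = m-\h$ for $m = 1,\ldots,N$ yields
\begin{equation*}
\Bigl(\frac{\cos\pi z}{\pi}\Bigr)^{2}\sum_{m=1}^{N}\A(m-\h,\Phi;z) = I(N,\Phi;z) - I(\beta_{0},\Phi;z),\qquad z\in K.
\end{equation*}

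To finish, I invoke the bound (\ref{cv26}) of Lemma \ref{lem2.2} with $\beta = N$. Here $\sec^{2}\pi N = 1$, the quantity $|z|/\bigl||\Re z| - N\bigr|$ is bounded on $K$ once $N$ is large enough, and $e^{2\pi|\Im z|}$ is bounded on $K$, so $|I(N,\Phi;z)|\le C_{K}\,B(N,\Phi)$ uniformly for $z\in K$; and $B(N,\Phi)\to 0$ by hypothesis (\ref{cv12}). Therefore the partial sums in (\ref{cv33}) converge uniformly on $K$ to the holomorphic function $-I(\beta_{0},\Phi;z)$. Since $K$ was arbitrary and each partial sum is entire, the limit $\G(\Phi,z)$ is entire. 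The parallel argument for $\sH(\Phi,z)$ uses $\beta_{0}\notin\Z$ with $\beta_{0} > \sup_{z\in K}|\Re(z)|$, shifts the contour to $\beta = N + \h$, and invokes (\ref{cv27}) with $\csc^{2}\pi(N+\h) = 1$. The main obstacle is establishing the partial-sum identity of the second step: the residue computation must be done carefully enough to recover the four-term function $\A(\xi,\Phi;z)$, and the orientation/sign bookkeeping in the rectangular shift must be handled correctly. Once that identity is in hand, the uniform convergence follows at once from Lemma \ref{lem2.2} and the decay hypothesis (\ref{cv12}).
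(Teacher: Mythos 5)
Your strategy — relating the partial sums to the contour integrals $I(\beta,\Phi;z)$ and $I(N,\Phi;z)$ via the residue theorem and then using Lemma \ref{lem2.2} together with hypothesis (\ref{cv12}) to kill $I(N,\Phi;z)$ — is exactly the paper's, and your residue computation at $w = m - \h$ is correct. The paper chooses $\beta$ small, $0 < \beta < \h$, so that the rectangle $[\beta,N]\times[-T,T]$ encloses all of the poles $m-\h$ with $1\le m\le N$, together with the pole at $w=z$ when $\beta<\Re(z)<N$; the resulting identity carries a $\Phi(z)$ term, which is then eliminated by subtracting two partial sums and analytically continuing into the strip $|\Re z| < M$. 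You instead choose $\beta_0 > \sup_{K}|\Re z|$, so that the pole at $w=z$ lies outside the rectangle, which sidesteps the subtraction-and-continuation step. That is a legitimate and somewhat more direct variant.

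There is, however, an error in your partial-sum identity. The rectangle with corners $\beta_0\pm iT$ and $N\pm iT$ encloses only the poles $m-\h$ with $\beta_0 < m - \h < N$; the poles with $m-\h<\beta_0$, of which there are several once $\beta_0 \ge \h$, lie to the left of the contour and contribute no residues. The residue theorem therefore yields
\begin{equation*}
\Bigl(\frac{\cos\pi z}{\pi}\Bigr)^{2}\sum_{\beta_{0}<m-\h<N}\A(m-\h,\Phi;z) = I(N,\Phi;z) - I(\beta_{0},\Phi;z),
\end{equation*}
not the identity you claim with the sum starting at $m=1$, and so the stated limit $-I(\beta_0,\Phi;z)$ is off by the fixed finite sum $(\cos\pi z/\pi)^2\sum_{m-\h<\beta_0}\A(m-\h,\Phi;z)$. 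This does not sink the argument, since that correction is a fixed entire function independent of $N$, and the partial sums $\sum_{m=1}^N$ are still uniformly Cauchy on $K$ because $I(N,\Phi;z)\to 0$ uniformly there; but the identity and the limiting expression should be stated correctly, and the same slip carries over to your sketch for $\sH(\Phi,z)$.
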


\begin{proof} We assume that $z$ is a complex number in $\rr$ such that $z-\h$ is not an integer.  Then
\begin{equation}\label{cv37}
w\mapsto \Bigl(\frac{\cos \pi z}{\cos \pi w}\Bigr)^2\Bigl(\frac{2w}{z^2 - w^2}\Bigr)\Phi(w)
\end{equation}
defines a meromorphic function of $w$ on the right half plane $\rr$.  We find that (\ref{cv37}) has a
simple pole at $w = z$ with residue $-\Phi(z)$.  And for each positive integer $n$, (\ref{cv37}) has a 
pole of order at most two at $w = n - \h$ with residue
\begin{equation*}
\Bigl(\frac{\cos \pi z}{\pi}\Bigr)^2 \A(n - \hh, \Phi; z). 
\end{equation*} 
Plainly (\ref{cv37}) has no other poles in $\rr$.  Let $0 < \beta < \h$, let $N$ be a positive integer, and $T$ a
positive real parameter.  Write $\Gamma(\beta, N, T)$ for the simply connected, positively oriented
rectangular path connecting the points $\beta - iT$, $N - iT$, $N + iT$, $\beta + iT$ and $\beta - iT$.
If $z$ satisfies $\beta < \Re(z) < N$ and $|\Im(z)| < T$, and $z-\h$ is not an 
integer, then from the residue theorem we obtain the identity
\begin{align}\label{cv38}
\begin{split}
\Bigl(\frac{\cos \pi z}{\pi}\Bigr)^2 &\sum_{n=1}^N \A(n - \hh, \Phi; z) - \Phi(z) \\
	&=\frac{1}{2\pi i}\int_{\Gamma(\beta, N, T)} \Bigl(\frac{\cos \pi z}{\cos \pi w}
		\Bigr)^2\Bigl(\frac{2w}{z^2 - w^2}\Bigr)\Phi(w)\ \dw.
\end{split}
\end{align}
We let $T\rightarrow \infty$ on the right hand side of (\ref{cv38}), and we use the hypotheses (\ref{cv10})
and (\ref{cv11}).  In this way we conclude that
\begin{equation}\label{cv39}
\Bigl(\frac{\cos \pi z}{\pi}\Bigr)^2 \sum_{n=1}^N \A(n - \hh, \Phi; z) - \Phi(z) = I(N, \Phi; z) - I(\beta, \Phi; z).
\end{equation}
Initially (\ref{cv39}) holds for $\beta < \Re(z) < N$ and $z-\h$ not an integer.  However, we have already observed
that both sides of (\ref{cv39}) are analytic in the strip $\{z\in \C: \beta < \Re(z) < N\}$.  Therefore the condition
that $z-\h$ is not an integer can be dropped.

Now let $M < N$ be positive integers.  From (\ref{cv39}) we find that
\begin{equation}\label{cv40}
\Bigl(\frac{\cos \pi z}{\pi}\Bigr)^2 \sum_{n=M+1}^N \A(n - \hh, \Phi; z)= I(N, \Phi; z) - I(M, \Phi; z)
\end{equation}
in the infinite strip $\{z\in \C: \beta < \Re(z) < M\}$.  In fact we have seen that both sides of (\ref{cv40})
are analytic in the infinite strip $\{z\in \C: |\Re(z)| < M\}$.  Therefore the identity (\ref{cv40}) must hold in
this larger domain by analytic continuation.  Let $\K\subseteq \C$ be a compact set and assume that $L$ is an 
integer so large that $\K \subseteq \{z\in \C: 2|z| < L\}$.  From (\ref{cv12}), Lemma \ref{lem2.2}, and 
(\ref{cv40}), it is obvious that the sequence of entire functions (\ref{cv33}), where $L \le N$,
is uniformly Cauchy on $\K$.  This verifies the first assertion of the lemma and shows that (\ref{cv34}) 
defines an entire function.  The second assertion of the lemma can be established in essentially the same manner.
\end{proof}

\begin{lemma}\label{lem2.4}  Assume that the analytic function $\Phi:\rr \rightarrow \C$ satisfies the conditions
{\rm (\ref{cv10})}, {\rm (\ref{cv11})} and {\rm (\ref{cv12})}.  Let the entire functions $\G(\Phi, z)$ and $\sH(\Phi, z)$ be
defined by {\rm (\ref{cv34})} and {\rm (\ref{cv36})}, respectively.  If $0 < \beta < \h$ then the identity 
\begin{equation}\label{cv41}
\Phi(z) - \G(\Phi, z) =  I(\beta, \Phi; z)
\end{equation} 
holds for all $z$ in the half plane $\{z\in \C: \beta < \Re(z)\}$, and the identity
\begin{equation}\label{cv42}
-\G(\Phi, z) =  I(\beta, \Phi; z)
\end{equation}
holds for all $z$ in the infinite strip $\{z\in \C: |\Re(z)| < \beta\}$.  If $0 < \beta < 1$ then the identity
\begin{equation}\label{cv43}
\Phi(z) - \sH(\Phi, z) =  J(\beta, \Phi; z)
\end{equation} 
holds for all $z$ in the half plane $\{z\in \C: \beta < \Re(z)\}$, and the identity
\begin{equation}\label{cv44}
-\sH(\Phi, z) =  J(\beta, \Phi; z)
\end{equation}
holds for all $z$ in the infinite strip $\{z\in \C: |\Re(z)| < \beta\}$.
\end{lemma}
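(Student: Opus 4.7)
The plan is to derive the four identities (2.41)--(2.44) by reusing the contour-integration already carried out in the proof of Lemma 2.3, now passing to the limit $N \to \infty$ and separately rerunning the residue calculation for $z$ in the strip $|\Re(z)| < \beta$, where the two simple poles of the integrand at $w = \pm z$ fall outside the rectangular contour.

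For (2.41), I would start from identity (2.39) of the proof of Lemma 2.3, valid for $0 < \beta < \h$ and $\beta < \Re(z) < N$,
$$\Bigl(\frac{\cos \pi z}{\pi}\Bigr)^2 \sum_{n=1}^N \A(n - \hh, \Phi; z) - \Phi(z) = I(N, \Phi; z) - I(\beta, \Phi; z),$$
and let $N \to \infty$ along positive integers. By the definition (2.34) of $\G(\Phi, z)$, the left-hand side tends to $\G(\Phi, z) - \Phi(z)$. To see that $I(N, \Phi; z) \to 0$ for each fixed $z$, I would apply the bound (2.26) of Lemma 2.2 with parameter $N$: since $N$ is an integer one has $\sec^2 \pi N = 1$, the factor $B(N, \Phi)$ tends to $0$ by hypothesis (2.12), and $(1 + |z|/|N - |x||)\,e^{2\pi|y|}$ is bounded for fixed $z$ once $N$ is large. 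This yields (2.41) on the half-plane $\beta < \Re(z)$.

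For (2.42), I would rerun the contour-integration argument from the proof of Lemma 2.3 with $z$ restricted to $|\Re(z)| < \beta$. The only change is that now both simple poles of the integrand (2.37) at $w = \pm z$ lie outside the rectangle $\Gamma(\beta, N, T)$; the only enclosed singularities remain the double poles at $w = n - \h$ for $1 \le n \le N$, with residues identical to those computed in Lemma 2.3. The residue theorem therefore produces
$$\Bigl(\frac{\cos \pi z}{\pi}\Bigr)^2 \sum_{n=1}^N \A(n - \hh, \Phi; z) = I(N, \Phi; z) - I(\beta, \Phi; z),$$
without the $-\Phi(z)$ term, and the same limiting argument as above yields (2.42).

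The proofs of (2.43) and (2.44) proceed along the same lines, applied instead to the meromorphic function $w \mapsto (\sin \pi z/\sin \pi w)^2 (2w/(z^2 - w^2))\Phi(w)$, whose double poles in $\rr$ are the positive integers. The one bookkeeping adjustment is that, to exploit the bound of Lemma 2.2 at the right edge, one takes that edge at $\Re(w) = N + \h$ (a non-integer, so that $J(N + \h, \Phi; z)$ is defined and $\csc^2 \pi(N + \h) = 1$) and then appeals to (2.12) exactly as before. The main point in the whole argument that requires attention is simply the bookkeeping of which poles lie inside the rectangle in each of the two regimes $\beta < \Re(z)$ versus $|\Re(z)| < \beta$; once this is sorted, Lemma 2.2, the residue theorem, and hypotheses (2.10)--(2.12) supply the rest.
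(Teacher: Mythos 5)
Your proposal is correct and follows essentially the same route as the paper: start from identity (\ref{cv39}), let $N\to\infty$, and use hypothesis (\ref{cv12}) together with Lemma~\ref{lem2.2} to kill $I(N,\Phi;z)$; then rerun the residue computation for $|\Re(z)|<\beta$ noting that the simple poles at $w=\pm z$ fall outside the rectangle. Your remark that the right edge should be placed at $\Re(w)=N+\tfrac12$ in the $\sH$/$J$ case is a small bookkeeping detail the paper leaves implicit, and it is handled correctly.
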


\begin{proof}  We argue as in the proof of Lemma \ref{lem2.3}, letting $N\rightarrow \infty$ on both sides of
(\ref{cv39}).  Then we use (\ref{cv12}) and Lemma \ref{lem2.2}, and obtain the identity
\begin{equation*}\label{cv45}
\Phi(z) - \G(\Phi, z) = I(\beta, \Phi; z)
\end{equation*}
at each point of the half plane $\{z\in \C: \beta < \Re(z)\}$.  This proves (\ref{cv41}).

Next, we assume that $|\Re(z)| < \beta$.  In this case the residue theorem provides the identity
\begin{align}\label{cv46}
\begin{split}
\Bigl(\frac{\cos \pi z}{\pi}\Bigr)^2 &\sum_{n=1}^N \A(n - \hh, \Phi; z) \\
	&= \frac{1}{2\pi i}\int_{\Gamma(\beta, N, T)} \Bigl(\frac{\cos \pi z}{\cos \pi w}
		\Bigr)^2\Bigl(\frac{2w}{z^2 - w^2}\Bigr)\Phi(w)\ \dw.
\end{split}
\end{align}
We let $T\rightarrow \infty$ and argue as before.  In this way (\ref{cv46}) leads to 
\begin{equation}\label{cv47}
\Bigl(\frac{\cos \pi z}{\pi}\Bigr)^2 \sum_{n=1}^N \A(n - \hh, \Phi; z) = I(N, \Phi; z) - I(\beta, \Phi; z).
\end{equation}
Then we let $N\rightarrow \infty$ on both sides of (\ref{cv47}) and we use (\ref{cv12}) and Lemma \ref{lem2.2} 
again.  We find that
\begin{equation*}
 -\G(\Phi, z) = I(\beta, \Phi; z),
\end{equation*}
and this verifies (\ref{cv42}).

The identities (\ref{cv43}) and (\ref{cv44}) are obtained in the same way.
\end{proof}

\begin{corollary}\label{cor2.5} 
Suppose that $\Phi(z) = 1$ is constant on $\rr$.  If $0 < \beta < \h$ then 
\begin{equation}\label{cv48}
I(\beta, 1; z) = 0,
\end{equation}
in the open half plane $\{z\in\C: \beta < \Re(z)\}$.  If $0 < \beta < 1$ then
\begin{equation}\label{cv49}
J(\beta, 1; z) = \left(\frac{\sin \pi z}{\pi z}\right)^2,
\end{equation}
in the open half plane $\{z\in\C: \beta < \Re(z)\}$.
\end{corollary}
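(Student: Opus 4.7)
The plan is to apply Lemma \ref{lem2.4} with $\Phi \equiv 1$ and then explicitly evaluate the resulting entire functions $\G(1,z)$ and $\sH(1,z)$ using the Mittag--Leffler expansions of $\pi^2/\cos^2 \pi z$ and $\pi^2/\sin^2 \pi z$. First I would verify that the constant function $\Phi(z)=1$ meets the hypotheses (\ref{cv10}), (\ref{cv11}), (\ref{cv12}); all three are immediate from the bound $|1/(x+iy)| \le (x^2+y^2)^{-1/2}$ together with the exponential damping factor $e^{-2\pi|y|}$.

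Since $\Phi'(\xi)=0$ for all $\xi>0$, the rational function defined in (\ref{cv32}) collapses to
\begin{equation*}
\A(\xi, 1; z) = (z-\xi)^{-2} + (z+\xi)^{-2},
\end{equation*}
so that summing over $\xi = n-\hh$ with $n \ge 1$ amounts to a symmetric sum over all half-integers. The classical identity
\begin{equation*}
\sum_{k\in\Z} \frac{1}{(z-k-\hh)^2} = \frac{\pi^2}{\sin^2 \pi(z-\hh)} = \frac{\pi^2}{\cos^2 \pi z}
\end{equation*}
then gives $\G(1,z) = \bigl(\cos\pi z/\pi\bigr)^2 \cdot \pi^2/\cos^2\pi z = 1$, and (\ref{cv41}) yields $I(\beta, 1; z) = 1 - \G(1,z) = 0$ throughout $\{z\in\C: \beta < \Re(z)\}$, provided $0<\beta<\h$ so that $\beta-\h$ is not an integer.

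For (\ref{cv49}), the analogous sum over $\xi = n$ with $n \ge 1$ becomes a symmetric sum over all nonzero integers. Using
\begin{equation*}
\sum_{n\ne 0} \frac{1}{(z-n)^2} = \frac{\pi^2}{\sin^2 \pi z} - \frac{1}{z^2},
\end{equation*}
I get $\sH(1,z) = \bigl(\sin\pi z/\pi\bigr)^2 \bigl(\pi^2/\sin^2\pi z - z^{-2}\bigr) = 1 - \bigl(\sin\pi z/\pi z\bigr)^2$, and (\ref{cv43}) then produces $J(\beta, 1; z) = 1 - \sH(1,z) = \bigl(\sin\pi z/\pi z\bigr)^2$ on $\{z\in\C: \beta < \Re(z)\}$ whenever $0<\beta<1$. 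There is no real obstacle to overcome: once Lemma \ref{lem2.4} is in hand, the entire argument reduces to recognizing the two standard cotangent-type partial fraction identities above.
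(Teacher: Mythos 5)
Your proposal is correct and follows essentially the same route as the paper: compute $\G(1,z)=1$ and $\sH(1,z)=1-(\sin\pi z/\pi z)^2$ from the symmetric partial-fraction sums $\sum_{k\in\Z}(z-k-\hh)^{-2}=\pi^2/\cos^2\pi z$ and $\sum_{n\ne 0}(z-n)^{-2}=\pi^2/\sin^2\pi z - z^{-2}$, then read off $I(\beta,1;z)$ and $J(\beta,1;z)$ from (\ref{cv41}) and (\ref{cv43}) of Lemma \ref{lem2.4}.
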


\begin{proof}  We have
\begin{align*}
\begin{split}
\G(1, z) &= \lim_{N\rightarrow \infty}\left(\frac{\cos \pi z}{\pi}\right)^2 \sum_{n=1}^N \A(n - \hh, 1; z) \\
     &= \lim_{N\rightarrow \infty}\left(\frac{\cos \pi z}{\pi}\right)^2 \sum_{n=-N}^{N-1} (z-n-\hh)^{-2} = 1.
\end{split}
\end{align*}
Now the identity (\ref{cv48}) follows from (\ref{cv41}).  In a similar manner,
\begin{align*}
\sH(1, z) &= \lim_{N\rightarrow \infty}\left(\frac{\sin \pi z}{\pi}\right)^2 \sum_{n=1}^N \A(n, 1; z) \\
     &= \lim_{N\rightarrow \infty} \left(\frac{\sin \pi z}{\pi}\right)^2 \sum_{n=-N}^N (z - n)^{-2}
     		- \left(\frac{\sin \pi z}{\pi z}\right)^2 \\
     &= 1 - \left(\frac{\sin \pi z}{\pi z}\right)^2,
\end{align*}
and (\ref{cv49}) follows from (\ref{cv43}).
\end{proof}

\begin{lemma}\label{lem2.6}
Assume that the analytic function $\Phi:\rr \rightarrow \C$ satisfies the conditions
{\rm (\ref{cv10})}, {\rm (\ref{cv11})} and {\rm (\ref{cv12})}.  Let the entire functions $\G(\Phi, z)$ and $\sH(\Phi, z)$ 
be defined by {\rm (\ref{cv34})} and {\rm (\ref{cv36})}, respectively.  Then there exists a positive number 
$c(\Phi)$, depending only on $\Phi$, such that the inequalities
\begin{equation}\label{cv51}
|\G(\Phi, z)|\le c(\Phi)(1 +|z|)e^{2\pi |y|},
\end{equation}
and
\begin{equation}\label{cv52}
|\sH(\Phi, z)| \le c(\Phi)(1 +|z|)e^{2\pi |y|},
\end{equation}
hold for all complex numbers $z=x+iy$.  In particular, both $\G(\Phi, z)$ and $\sH(\Phi, z)$ are entire functions
of exponential type at most $2\pi$.
\end{lemma}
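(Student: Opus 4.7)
The plan is to derive the growth bounds (\ref{cv51}) and (\ref{cv52}) from the representations in Lemma \ref{lem2.4}, by choosing the contour parameter $\beta$ appropriately in different regions of $\C$ and then invoking Lemmas \ref{lem2.1} and \ref{lem2.2}.

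First I would note that $\G(\Phi, z)$ and $\sH(\Phi, z)$ are even functions of $z$: the rational function $\A(\xi, \Phi; z)$ defined in (\ref{cv32}) is visibly even in $z$, and $(\cos \pi z)^2$ and $(\sin \pi z)^2$ are also even. Hence it suffices to verify (\ref{cv51}) and (\ref{cv52}) for $z = x + iy$ with $x \ge 0$.

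To prove (\ref{cv51}) I would split this half plane into two pieces. In the strip $0 \le x \le \tfrac{1}{4}$, the identity (\ref{cv42}) applies with $\beta = \tfrac{3}{8}$ (legitimate since $0 < \tfrac{3}{8} < \tfrac{1}{2}$ and $\tfrac{3}{8} - \tfrac{1}{2}$ is not an integer), yielding $\G(\Phi, z) = -I(\tfrac{3}{8}, \Phi; z)$. Here $\tfrac{3}{8} - |x| \ge \tfrac{1}{8}$, so Lemma \ref{lem2.2} bounds $|I(\tfrac{3}{8},\Phi;z)|$ by a constant depending only on $\Phi$ times $(1+|z|)e^{2\pi|y|}$. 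In the half plane $x \ge \tfrac{1}{4}$, the identity (\ref{cv41}) applies with $\beta = \tfrac{1}{8}$, giving $\G(\Phi, z) = \Phi(z) - I(\tfrac{1}{8}, \Phi; z)$. Now $|x| - \tfrac{1}{8} \ge \tfrac{1}{8}$, so Lemma \ref{lem2.2} handles $I(\tfrac{1}{8}, \Phi; z)$, while Lemma \ref{lem2.1} with $\delta = \tfrac{1}{4}$ gives $|\Phi(z)| \le c(\tfrac{1}{4}, \Phi)\,|z|\, e^{2\pi|y|}$. These two regional bounds combine to the asserted estimate (\ref{cv51}).

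The proof of (\ref{cv52}) is entirely analogous with the cosine expressions replaced by sines, and the admissible range for $\beta$ widened to $(0,1) \setminus \Z$: in the strip $0 \le x \le \tfrac{1}{2}$ apply (\ref{cv44}) with $\beta = \tfrac{3}{4}$ so that $\tfrac{3}{4} - |x| \ge \tfrac{1}{4}$; in the half plane $x \ge \tfrac{1}{2}$ apply (\ref{cv43}) with $\beta = \tfrac{1}{4}$ so that $|x| - \tfrac{1}{4} \ge \tfrac{1}{4}$, combined with Lemma \ref{lem2.1} for $\delta = \tfrac{1}{2}$. The exponential type conclusion is then immediate: the bounds (\ref{cv51}) and (\ref{cv52}) give $\max_{|z| = r}|\G(\Phi, z)| \le c(\Phi)(1+r)e^{2\pi r}$ and the analogue for $\sH$, so the growth indicator of each entire function is at most $2\pi$.

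The only real obstacle is bookkeeping: the estimate in Lemma \ref{lem2.2} degenerates as $|\Re z| \to \beta$, so no single choice of $\beta$ yields a bound valid on all of $\C$. The case split above is engineered to keep $||x|-\beta|$ bounded below by a fixed positive constant in each region while keeping $\beta$ within the admissible range required by Lemma \ref{lem2.4}, and the evenness of $\G(\Phi,\cdot)$ and $\sH(\Phi,\cdot)$ then covers the remaining left half plane for free.
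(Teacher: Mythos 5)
Your argument is correct and is essentially the same as the paper's: both split the right half plane into a narrow strip around the imaginary axis (handled via the identity (\ref{cv42}) or (\ref{cv44}) with a larger $\beta$) and the remaining half plane (handled via (\ref{cv41}) or (\ref{cv43}) with a smaller $\beta$, together with Lemma \ref{lem2.1} for $\Phi$ itself), then invoke Lemma \ref{lem2.2} and evenness of $\G(\Phi,\cdot)$ and $\sH(\Phi,\cdot)$ to finish. Even the specific numerical choices for the $\G$ case ($\beta=\tfrac18$ on $\Re(z)\geq\tfrac14$, $\beta=\tfrac38$ on $|\Re(z)|\leq\tfrac14$) coincide with those in the paper; the paper leaves the $\sH$ case to the reader, which you have filled in with equally valid choices.
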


\begin{proof}
In the closed half plane $\{z\in \C: \frac14 \le \Re(z)\}$ the identity (\ref{cv41}) implies that
\begin{equation*}
|\G(\Phi, z)| \le |\Phi(z)| + |I(\tfrac{1}{8}, \Phi; z)|.
\end{equation*}
Then an estimate of the form (\ref{cv51}) in this half plane follows from Lemma \ref{lem2.1} and Lemma \ref{lem2.2}.
In the closed infinite strip $\{z\in \C: |\Re(z)| \le \frac14\}$ we have
\begin{equation*}
|\G(\Phi, z)| = |I(\tfrac{3}{8}, \Phi; z)|
\end{equation*}
from the identity (\ref{cv42}).  Plainly an estimate of the form (\ref{cv51}) in this closed infinite
strip follows from Lemma \ref{lem2.2}.  This proves the inequality (\ref{cv51}) for all complex 
$z$ because $\G(\Phi, z)$ is an even function of $z$.  The inequality (\ref{cv52}) is established
in the same manner using $J(\beta, \Phi; z)$ in place of $I(\beta, \Phi; z)$.
\end{proof}


\section{Fourier expansions}

It follows directly from the definition (\ref{intro4}) that $z\mapsto L(\lambda,z)$ interpolates the 
values of the function $x\mapsto e^{-\lambda |x|}$ and its derivative at points of the coset $\Z + \h$.  
That is, the identities
\begin{equation}\label{gv3}
L(\lambda, k+\hh) = e^{-\lambda |k+\h|}\quad\text{and}\quad 
				L^{\prime}(\lambda, k+\hh) = -\sgn(k+\hh)\lambda e^{-\lambda |k+\h|}
\end{equation}
hold for each integer $k$.  Similarly, it follows from (\ref{intro5}) that $z\mapsto M(\lambda,z)$ 
interpolates the values of the function $x\mapsto e^{-\lambda |x|}$ at points of $\Z$ and interpolates 
its derivative at points of $\Z\setminus\{0\}$.  Thus we get
\begin{equation}\label{gv4}
M(\lambda,l) = e^{-\lambda |l|}\quad\text{and}\quad M^{\prime}(\lambda,l) = -\sgn(l)e^{-\lambda |l|}
\end{equation}
for each integer $l$.

\begin{lemma}\label{lem3.2}  If $0 < \beta < \h$, then at each point $z$ in the half plane 
$\{z\in\C:\beta < \Re(z)\}$ we have
\begin{equation}\label{cv-1}
e^{-\lambda z}- L(\lambda, z) = \dfrac{1}{2\pi i} \int_{\beta -i \infty}^{\beta + i \infty} 
	\left(\dfrac{\cos \pi z}{\cos \pi w}\right)^2 \left(\dfrac{2w}{z^2 - w^2}\right) e^{-w\lambda}\ \dw.
\end{equation}
If $0 < \beta < 1$, then at each point $z$ in the half plane $\{z\in\C:\beta < \Re(z)\}$ we have
\begin{equation}\label{cv0}
M(\lambda, z)- e^{-\lambda z} = \dfrac{1}{2\pi i} \int_{\beta -i \infty}^{\beta + i \infty} 
	\left(\dfrac{\sin \pi z}{\sin \pi w}\right)^2 \left(\dfrac{2w}{z^2 - w^2}\right) 
		\left(1 - e^{-w\lambda}\right)\ \dw. 
\end{equation}
\end{lemma}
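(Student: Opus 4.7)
The plan is to identify $L(\lambda,z)$ and $M(\lambda,z)$ with the entire functions $\G(\Phi,z)$ and $\sH(\Phi,z)$ built in Section~2, taking $\Phi(w)=e^{-\lambda w}$, and then invoke Lemma~\ref{lem2.4} and Corollary~\ref{cor2.5}. For $\lambda>0$ fixed, $|\Phi(w)|=e^{-\lambda\Re(w)}$, so the three hypotheses (\ref{cv10}), (\ref{cv11}), (\ref{cv12}) are verified immediately: division by $|w|$ and the weight $e^{-2\pi|y|}$ make the relevant $y$-integrals converge and remain uniformly bounded for $\Re(w)\ge\eta>0$, while the factor $e^{-\lambda x}$ supplies the decay as $x\to\infty$ needed for (\ref{cv12}).

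Next, I would rewrite the double series (\ref{intro4}) by pairing the index $k=n-1$ with $k=-n$ for $n\ge 1$; the interpolation nodes then cluster as $\pm(n-\h)$, and since $\Phi'(w)=-\lambda e^{-\lambda w}$, each pair contributes exactly the building block $\A(n-\hh,\Phi;z)$ of (\ref{cv32}). Comparing with (\ref{cv34}) gives $L(\lambda,z)=\G(e^{-\lambda\cdot},z)$. Identity (\ref{cv41}) of Lemma~\ref{lem2.4}, applied with $0<\beta<\h$, then yields
\begin{equation*}
e^{-\lambda z}-L(\lambda,z)=I(\beta,e^{-\lambda\cdot};z)
\end{equation*}
on $\{z\in\C:\beta<\Re(z)\}$, which is precisely (\ref{cv-1}) after unfolding the definition (\ref{cv22}).

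For $M$ the same pairing works, but the term $k=0$ in the first inner sum of (\ref{intro5}) must be isolated (the $l=0$ term vanishes because $\sgn(0)=0$). The isolated term contributes $(\sin\pi z/\pi z)^2$, and the remaining pairs collect into $\sH(e^{-\lambda\cdot},z)$, giving
\begin{equation*}
M(\lambda,z)=\Bigl(\frac{\sin\pi z}{\pi z}\Bigr)^2+\sH(e^{-\lambda\cdot},z).
\end{equation*}
Identity (\ref{cv43}) of Lemma~\ref{lem2.4} shows that $\sH(e^{-\lambda\cdot},z)=e^{-\lambda z}-J(\beta,e^{-\lambda\cdot};z)$, while Corollary~\ref{cor2.5} identifies $(\sin\pi z/\pi z)^2=J(\beta,1;z)$ for $0<\beta<1$ and $\Re(z)>\beta$. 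Subtracting $e^{-\lambda z}$ and using linearity of the contour integral defining $J$ gives
\begin{equation*}
M(\lambda,z)-e^{-\lambda z}=J(\beta,1;z)-J(\beta,e^{-\lambda\cdot};z),
\end{equation*}
which is exactly (\ref{cv0}).

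The only nontrivial point is the algebraic bookkeeping showing that the doubly-indexed sums defining $L$ and $M$ regroup as $\sum_{n\ge 1}\A(\xi_n,\Phi;z)$; once that identification is established, the lemma reduces mechanically to the two results of Section~2 quoted above.
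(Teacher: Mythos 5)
Your proof is correct and follows exactly the route taken by the paper: identify $L(\lambda,\cdot)=\G(e^{-\lambda\cdot},\cdot)$ and $M(\lambda,\cdot)=\sH(e^{-\lambda\cdot},\cdot)+(\sin\pi z/\pi z)^2$ via the $\A$-building-blocks of Lemma~\ref{lem2.3}, then invoke Lemma~\ref{lem2.4} and Corollary~\ref{cor2.5}. The paper simply states these identifications without the index-pairing bookkeeping you supply, but the argument is the same.
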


\begin{proof}  We apply Lemma \ref{lem2.3} with $\Phi(z) = e^{-z\lambda}$.  It follows that
\begin{equation*}
\G(\Phi, z) = L(\lambda, z)\quad\text{and}\quad 
		\sH(\Phi, z) = M(\lambda, z) - \left(\frac{\sin \pi z}{\pi z}\right)^2.
\end{equation*}
The identities (\ref{cv-1}) and (\ref{cv0}) follow now from Lemma \ref{lem2.4} and Corollary
\ref{cor2.5}.
\end{proof}

As $x\mapsto L(\lambda,x)$ and $x\mapsto M(\lambda,x)$ are both bounded and 
integrable on $\R$, their Fourier transforms
\begin{equation}\label{finite3}
\tL(\lambda,t) = \int_{-\infty}^{\infty}L(\lambda,x)e(-tx)\ \dx\quad\text{and}\quad
	\tM(\lambda,t) = \int_{-\infty}^{\infty}M(\lambda,x)e(-tx)\ \dx
\end{equation}
are continuous functions of the real variable $t$ supported on the interval $[-1,1]$.  Then 
by Fourier inversion we have the representations
\begin{equation}\label{finite4}
L(\lambda,z) = \int_{-1}^{1}\tL(\lambda,t)e(tz)\ \dt\quad\text{and}\quad 
		M(\lambda,z) = \int_{-1}^{1}\tM(\lambda,t)e(tz)\ \dt
\end{equation}
for all complex $z$.  It will be useful to have more explicit information about the Fourier transforms 
of these functions.

\begin{lemma}\label{lem3.3}  For $|t| \le 1$ the Fourier transforms {\rm (\ref{finite3})} are given by
\begin{equation}\label{finite5}
\tL(\lambda, t) = \dfrac{(1-|t|)\sinh\left(\frac{\lambda}{2}\right) \cos \pi t + \frac{\lambda}{2\pi}
	|\sin \pi t|\cosh\left(\frac{\lambda}{2}\right)}{\sinh^2\left(\frac{\lambda}{2}\right) + \sin ^2\pi t}, 
\end{equation}
and
\begin{equation}\label{finite6}
\tM(\lambda, t) = \dfrac{(1-|t|)\sinh\left(\frac{\lambda}{2}\right)\cosh\left(\frac{\lambda}{2}\right) 
	+ \frac{\lambda}{2\pi}|\sin \pi t|\cos \pi t}{\sinh^2\left(\frac{\lambda}{2}\right) + \sin ^2\pi t}.
\end{equation}
Moreover, we have
\begin{equation}\label{pft1}
0 \le \tL(\lambda, t)\quad\text{and}\quad 0 \le \tM(\lambda, t)
\end{equation}
for all real $t$.
\end{lemma}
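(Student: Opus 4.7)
The plan is to expand each summand of $L(\lambda,z)$ and $M(\lambda,z)$ as a translate of a Fej\'er kernel, Fourier transform termwise, and then collapse the resulting series using the Poisson-kernel identity $1 - 2q\cos 2\pi t + q^2 = 4q\bigl(\sinh^2(\lambda/2) + \sin^2 \pi t\bigr)$ with $q = e^{-\lambda}$.  The starting point is the pair of trigonometric identities $\cos \pi z = (-1)^{k+1}\sin \pi(z - k - \hh)$ and $\sin \pi z = (-1)^k \sin \pi(z-k)$, valid for every integer $k$, which recast {\rm (\ref{intro4})} and {\rm (\ref{intro5})} as
\begin{align*}
L(\lambda, z) &= \sum_{k\in\Z} e^{-\lambda|k+\h|}\,\frac{\sin^2 \pi(z-k-\h)}{\pi^2(z-k-\h)^2}
		- \lambda\sum_{l\in\Z}\sgn(l+\h)\,e^{-\lambda|l+\h|}\,\frac{\sin^2 \pi(z-l-\h)}{\pi^2(z-l-\h)}, \\
M(\lambda, z) &= \sum_{k\in\Z} e^{-\lambda|k|}\,\frac{\sin^2 \pi(z-k)}{\pi^2(z-k)^2}
		- \lambda \sum_{l\in\Z}\sgn(l)\,e^{-\lambda|l|}\,\frac{\sin^2 \pi(z-l)}{\pi^2(z-l)}.
\end{align*}

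Next I would invoke the Fourier transform identities, valid for $|t|\le 1$,
\begin{equation*}
\int_{-\infty}^{\infty} \frac{\sin^2 \pi(x-a)}{\pi^2(x-a)^2}\,e(-tx)\,\dx = (1-|t|)\,e(-at),
\end{equation*}
and, by applying $\widehat{xf}(t) = -(2\pi i)^{-1}\hat f^{\,\prime}(t)$ to $f(x) = \sin^2 \pi x/(\pi^2 x^2)$,
\begin{equation*}
\int_{-\infty}^{\infty} \frac{\sin^2 \pi(x-a)}{\pi^2(x-a)}\,e(-tx)\,\dx = \frac{\sgn(t)}{2\pi i}\,e(-at),
\end{equation*}
with both transforms vanishing for $|t|>1$.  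The first series in each of $L$ and $M$ converges absolutely in $L^1(\R)$, so it may be transformed termwise.  The second series is not absolutely $L^1$-convergent, but after pairing $l$ with $-l-1$ (respectively $l$ with $-l$) the combined summands decay like $1/x^2$ at infinity; the symmetric partial sums therefore lie in $L^1(\R)$ and converge there, making dominated convergence applicable.  Summing the resulting geometric series in $q$ and invoking the elementary identities $1 - q = 2q^{1/2}\sinh(\lambda/2)$ and $1 + q = 2q^{1/2}\cosh(\lambda/2)$, together with the Poisson identity above, yields {\rm (\ref{finite5})} and {\rm (\ref{finite6})} with common denominator $\sinh^2(\lambda/2) + \sin^2 \pi t$.

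For the nonnegativity {\rm (\ref{pft1})} the denominator is strictly positive on $\R$ and both transforms are supported in $[-1,1]$, so only the numerators for $|t|\le 1$ need be examined.  The numerator of $\tM(\lambda,t)$ is manifestly a sum of two nonnegative terms.  For $\tL(\lambda,t)$, nonnegativity is immediate on $|t|\le \hh$ where $\cos \pi t \ge 0$; on $\hh < |t|\le 1$, setting $s = 1-|t|\in[0,\hh)$ reduces the required inequality to $\tfrac{\lambda}{2\pi}\coth(\lambda/2) \ge s \cot \pi s$, which follows from the elementary estimate $u\coth u \ge 1$ for $u > 0$ (so the left side is at least $1/\pi$) together with the fact that $s\mapsto s\cot \pi s$ is strictly decreasing on $(0,\hh)$ with limit $1/\pi$ as $s\to 0^+$.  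The main technical obstacle is the second, conditionally convergent, series in each formula, where the pairing trick above is what permits an honest $L^1$ dominated-convergence argument.
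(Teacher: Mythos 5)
Your computation of the transforms is correct and follows a route that is genuinely different in mechanics, if not in spirit, from the paper's. The paper introduces the one-sided auxiliary function $A(\lambda,z)$ of \cite{GV}, invokes \cite[Theorem 9]{V} for $\widehat{A}$, and then expresses $L$ and $M$ as combinations of shifts and reflections of $A$; you instead rewrite the Fej\'er prefactor directly into each term of the two-sided series, transform term by term, and resum the resulting geometric series. Both approaches collapse to the same Poisson-kernel identity $1 - 2q\cos 2\pi t + q^2 = 4q\bigl(\sinh^2(\lambda/2)+\sin^2\pi t\bigr)$ with $q=e^{-\lambda}$, and your pairing device for the conditionally convergent sum is a legitimate substitute for the one-sidedness of $A$ that handles convergence in the paper's version. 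What your approach buys is self-containedness --- no appeal to an external lemma for $\widehat{A}$; what the paper's buys is that the one-sided sum is unconditionally convergent, avoiding the pairing argument entirely.

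There is, however, a genuine gap at the very end. You assert that the numerator of $\tM(\lambda,t)$ is ``manifestly a sum of two nonnegative terms,'' but it is not: for $\hh < |t| < 1$ one has $\cos\pi t < 0$, so the second term $\frac{\lambda}{2\pi}|\sin\pi t|\cos\pi t$ is strictly negative there. This is the same phenomenon you correctly flagged for $\tL$; it is a little ironic that you dealt with the harder case and waved off the easier one. The fix is routine: writing $s = 1-|t|\in(0,\hh)$, the numerator becomes
\begin{equation*}
s\,\sinh\bigl(\tfrac{\lambda}{2}\bigr)\cosh\bigl(\tfrac{\lambda}{2}\bigr) - \frac{\lambda}{2\pi}\sin\pi s\cos\pi s
	= \frac{1}{2}\Bigl(s\sinh\lambda - \frac{\lambda}{2\pi}\sin 2\pi s\Bigr),
\end{equation*}
which is nonnegative because $\sinh\lambda\ge\lambda$ and $\sin 2\pi s\le 2\pi s$. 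You should replace the ``manifestly nonnegative'' claim for $\tM$ with this (or an equivalent) one-line argument.
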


\begin{proof}  The Fourier transform $\tL(\lambda,t)$ can be explicitly determined as follows. For 
$\lambda > 0$ we define, as in \cite[equation (3.1)]{GV}, the entire function
\begin{equation*}
A(\lambda, z)  =  \left(\dfrac{\sin \pi z}{\pi}\right)^2 
	\sum_{n=0}^{\infty} e^{-\lambda n} \left\{ (z-n)^{-2} - \lambda(z-n)^{-1}\right\}.
\end{equation*}
Then $z\mapsto A(\lambda,z)$ has exponential type $2\pi$ and its restriction to $\R$ is in $L^2(\R)$. Using 
\cite[Theorem 9]{V} we find that
\begin{equation*}
A(\lambda, z) = \int_{-1}^1\widehat{A}(\lambda,t)e(tz)\ \dt
\end{equation*}
for all complex $z$, where
\begin{equation}\label{finite7}
\widehat{A}(\lambda,t) = (1-|t|)u_{\lambda}(t) + (2\pi i)^{-1} \sgn(t) v_{\lambda}(t)	
\end{equation}
with
\begin{equation*}
u_{\lambda}(t) = \sum_{m=0}^{\infty}e^{-\lambda m - 2\pi i mt} =  \left(1 - e^{-\lambda - 2\pi i t}\right)^{-1},
\end{equation*}
and
\begin{equation*}
v_{\lambda}(t) = -\lambda \sum_{m=0}^{\infty}e^{-\lambda m - 2\pi i mt}
			= -\lambda \left(1 - e^{-\lambda - 2\pi i t}\right)^{-1}.
\end{equation*}
Therefore (\ref{finite7}) can be written as
\begin{equation*}
\widehat{A}(\lambda,t) = \left\{\bigl(1-|t|\bigr) - \frac{\lambda}{2\pi i}\sgn(t)\right\}
	\left(1 - e^{-\lambda - 2\pi i t}\right)^{-1}	
\end{equation*}
for $|t| \leq 1$. Next we observe that
\begin{align*}
L(\lambda, z) &= e^{-\frac{\lambda}{2}} \left\{ A\left(\lambda, z - \hh\right) + A\left(\lambda, -z -\hh\right) \right\}\\
	      &= e^{-\frac{\lambda}{2}} \left\{ \int_{-1}^1\widehat{A}(\lambda,t)e\left(t(z - \hh)\right)\ \dt + 						
	      \int_{-1}^1\widehat{A}(\lambda,-t)e\left(t(z + \hh)\right)\ \dt\right\}.
\end{align*}
It follows that
\begin{align}\label{finite8}\begin{split}
\tL(\lambda, t) &= e^{-\frac{\lambda}{2}} \left\{\widehat{A}(\lambda,t)e\left(- \hh t\right)
	+ \widehat{A}(\lambda,-t)e\left( \hh t\right)\right\}\\
			& = \dfrac{(1-|t|)\sinh\left(\frac{\lambda}{2}\right) \cos \pi t + \frac{\lambda}{2\pi}
	|\sin \pi t|\cosh\left(\frac{\lambda}{2}\right)}{\sinh^2\left(\frac{\lambda}{2}\right) + \sin ^2\pi t}
\end{split}\end{align}
for $|t| \leq 1$.  In a similar manner we use
\begin{equation*}
M(\lambda,z) = A(\lambda,z) + A(\lambda, -z) - \left(\dfrac{\sin \pi z}{\pi z}\right)^2,
\end{equation*}
and the identity
\begin{equation*}
\left(\dfrac{\sin \pi z}{\pi z}\right)^2 = \int_{-1}^{1}(1 - |t|)e(tz)\ \dt.
\end{equation*}
We find that
\begin{equation}\label{finite9}
\tM(\lambda, t) = \dfrac{(1-|t|)\sinh\left(\frac{\lambda}{2}\right)\cosh\left(\frac{\lambda}{2}\right) 
	+ \frac{\lambda}{2\pi}|\sin \pi t|\cos \pi t}{\sinh^2\left(\frac{\lambda}{2}\right) + \sin ^2\pi t}. 
\end{equation}
It follows now from (\ref{finite8}) and (\ref{finite9}) that both $\tL(\lambda,t)$ 
and $\tM(\lambda,t)$ are nonnegative for all real $t$.
\end{proof}

For later applications is will be useful to have the following inequality.

\begin{corollary}\label{cor3.1}  If $0 < |t| \le 1$ then we have
\begin{equation}\label{ineq1}
\int_0^{\infty} \tL(\lambda, t)\lambda^{-1}\ \dl \le \frac{1}{2|t|}.
\end{equation}
\end{corollary}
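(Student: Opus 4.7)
The plan is to start from the closed form for $\tL(\lambda,t)$ furnished by Lemma \ref{lem3.3} and estimate the integral in (\ref{ineq1}) by direct computation. After the change of variable $\mu=\lambda/2$, the integrand splits naturally into two pieces according to the two summands in the numerator of $\tL$, giving
\begin{equation*}
\int_0^\infty \tL(\lambda,t)\lambda^{-1}\,\dl
 \;=\; (1-|t|)\cos\pi t\int_0^\infty\!\frac{\sinh\mu\,\mathrm{d}\mu}{\mu\bigl(\sinh^2\mu+\sin^2\pi t\bigr)}
 \;+\; \frac{|\sin\pi t|}{\pi}\int_0^\infty\!\frac{\cosh\mu\,\mathrm{d}\mu}{\sinh^2\mu+\sin^2\pi t}.
\end{equation*}

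The second integral on the right is elementary: the substitution $v=\sinh\mu$ transforms it into $\int_0^\infty(v^2+\sin^2\pi t)^{-1}\,\mathrm{d}v=\pi/(2|\sin\pi t|)$, so the second summand contributes exactly $\tfrac12$. Writing $I(t)$ for the remaining integral on the right, the inequality (\ref{ineq1}) reduces to showing that $(1-|t|)\cos\pi t\cdot I(t) \le (1-|t|)/(2|t|)$.

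I would then dispatch this in two cases. For $\tfrac12 \le |t| \le 1$ the factor $\cos\pi t$ is nonpositive, so the left side is nonpositive while the right side is nonnegative, and the inequality is automatic (with both sides vanishing at $|t|=1$). The substantive case, and the main obstacle, is $0 < |t| < \tfrac12$. My plan here is to remove the inconvenient $1/\mu$ factor in $I(t)$ by invoking the elementary inequality $\sinh\mu \le \mu\cosh\mu$ (immediate from $\sinh\mu/\mu=\int_0^1\cosh(s\mu)\,\mathrm{d}s\le\cosh\mu$), which bounds $I(t)$ above by the same elementary integral already computed, namely $\pi/(2|\sin\pi t|)$. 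After dividing the reduced inequality by $(1-|t|)>0$, matters come down to $\pi|t|\le\tan\pi|t|$, i.e.\ the standard inequality $x\le\tan x$ on $[0,\pi/2)$. The delicate point is that each of the two elementary inequalities $\sinh\mu \le \mu\cosh\mu$ and $x \le \tan x$ individually gives up some ground, but their combination yields precisely the bound required.
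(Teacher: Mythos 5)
Your proof is correct and uses essentially the same ingredients as the paper's: the closed form from Lemma \ref{lem3.3}, the substitution $v=\sinh(\lambda/2)$, the elementary bound $\sinh\mu\le\mu\cosh\mu$, and the inequality $\pi|t|\cos\pi t\le|\sin\pi t|$ (which is the paper's $\cos\pi t\le\sin\pi t/(\pi t)$ rearranged). The only organizational difference is that the paper applies both inequalities pointwise to the numerator of $\tL(\lambda,t)$ before integrating, which collapses it to $\tfrac{\lambda}{2}\cosh(\lambda/2)\cdot\tfrac{\sin\pi t}{\pi t}$ and yields $\tfrac{1}{2|t|}$ in one computation, sidestepping the split into two integrals and the explicit case distinction on the sign of $\cos\pi t$.
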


\begin{proof} For $0 < |t| \le 1$ we use the elementary inequalities
\begin{equation*}\label{ineq2}
\cos \pi t \le \frac{\sin \pi t}{\pi t},\quad\text{and}
	\quad \sinh\left(\frac{\lambda}{2}\right) \le \frac{\lambda}{2}\cosh\left(\frac{\lambda}{2}\right).
\end{equation*}
Then it follows from (\ref{finite5}) that
\begin{equation*}\label{ineq3}
\tL(\lambda, t) \le \Bigl(\frac{\sin \pi t}{\pi t}\Bigr)\dfrac{\frac{\lambda}{2}
	\cosh\left(\frac{\lambda}{2}\right)}{\sinh^2\left(\frac{\lambda}{2}\right) + \sin ^2\pi t},
\end{equation*}
and
\begin{equation*}\label{ineq4}
\int_0^{\infty} \tL(\lambda, t)\lambda^{-1}\ \dl 
	\le \Bigl(\frac{\sin \pi t}{\pi t}\Bigr)\int_0^{\infty} \dfrac{\frac{1}{2}
	\cosh\left(\frac{\lambda}{2}\right)}{\sinh^2\left(\frac{\lambda}{2}\right) + \sin ^2\pi t}\ \dl \\
	= \frac{1}{2|t|}. 
\end{equation*}  
\end{proof}
\begin{remark}\label{rem3.5}
In fact, Corollary \ref{cor3.1} is a particular application of the following more general upper bound 
\begin{equation}\label{Sec3}
 \widehat{L}(\lambda,t) \leq \dfrac{2\lambda}{\lambda^2 + 4 \pi^2 t^2}
\end{equation}
for all $\lambda >0$ and $t \in \R$. This bound may be useful in other applications. One can prove 
(\ref{Sec3}) by clearing denominators, expanding in Taylor series with respect to $\lambda$ and observing 
that all coefficients (which are now functions of $t$ only) are nonnegative.
\end{remark}

\begin{lemma}\label{lem3.4}  Let $\nu$ be a finite measure on the Borel subsets of $(0,\infty)$.
For each complex number $z$ the functions $\lambda\mapsto L(\lambda,z)$ and $\lambda\mapsto M(\lambda,z)$ are 
$\nu$-integrable on $(0,\infty)$.  The complex valued functions
\begin{equation}\label{finite10}
L_{\nu}(z) =\int_{0}^{\infty} L(\lambda,z)\ \dnu\quad\text{and}\quad 
		M_{\nu}(z) =\int_{0}^{\infty} M(\lambda,z)\ \dnu
\end{equation}
are entire functions which satisfy the inequalities
\begin{equation}\label{finite11}
|L_{\nu}(z)| \leq \nu\{(0,\infty)\}e^{2\pi|y|}\quad\text{and}\quad |M_{\nu}(z)| \leq \nu\{(0,\infty)\}e^{2\pi|y|}
\end{equation}
for all $z = x + iy$.  In particular, both $L_{\nu}(z)$ and $M_{\nu}(z)$ are entire functions of exponential 
type at most $2\pi$.
\end{lemma}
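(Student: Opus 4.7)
The plan is to extract a pointwise bound of the form $|L(\lambda, z)|, |M(\lambda, z)| \leq e^{2\pi|y|}$ that is uniform in $\lambda$, and then to derive everything from there using the finiteness of $\nu$.

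First I would use the Fourier representation (\ref{finite4}), which says that for all $z\in\C$,
\begin{equation*}
L(\lambda, z) = \int_{-1}^{1} \tL(\lambda, t)\, e(tz)\ \dt,\qquad M(\lambda, z) = \int_{-1}^{1} \tM(\lambda, t)\, e(tz)\ \dt.
\end{equation*}
Since $\tL(\lambda, t)$ and $\tM(\lambda, t)$ are nonnegative by Lemma \ref{lem3.3} and $|e(tz)| = e^{-2\pi t y} \leq e^{2\pi |y|}$ for $|t|\le 1$, I obtain
\begin{equation*}
|L(\lambda, z)| \leq e^{2\pi |y|} \int_{-1}^{1} \tL(\lambda, t)\ \dt = L(\lambda, 0)\, e^{2\pi |y|} \leq e^{2\pi |y|},
\end{equation*}
where the last inequality uses (\ref{intro6}) evaluated at $x=0$. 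The same argument, using $M(\lambda, 0) = 1$ from (\ref{gv4}) with $l = 0$, gives $|M(\lambda, z)| \leq e^{2\pi|y|}$.

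Next, these uniform bounds combined with the finiteness of $\nu$ immediately give the $\nu$-integrability of $\lambda \mapsto L(\lambda, z)$ and $\lambda \mapsto M(\lambda, z)$ for every fixed $z$, as well as the inequalities (\ref{finite11}). Joint measurability in $(\lambda,z)$ is clear, since the defining series (\ref{intro4}) and (\ref{intro5}) exhibit $L$ and $M$ as continuous functions of both variables.

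To see that $L_\nu$ and $M_\nu$ are entire, I would apply Morera's theorem. For any closed triangle $T \subset \C$, compactness of $T$ gives a uniform bound $|L(\lambda, z)| \leq C_T$ for $z \in \partial T$, independently of $\lambda$; since $\nu$ is finite, Fubini's theorem applies and yields
\begin{equation*}
\oint_{\partial T} L_\nu(z)\ \mathrm{d}z = \int_0^\infty \oint_{\partial T} L(\lambda, z)\ \mathrm{d}z\ \dnu = 0,
\end{equation*}
because $z\mapsto L(\lambda,z)$ is entire for each $\lambda$. The identical argument works for $M_\nu$. The exponential type bound is then just a restatement of (\ref{finite11}). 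The only real content is the pointwise bound in the first step; everything after it is a routine Fubini/Morera argument, so no serious obstacle is expected.
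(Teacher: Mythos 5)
Your proposal is correct and follows essentially the same route as the paper: both use the Fourier representations together with the nonnegativity of $\tL$ and $\tM$ to get $|L(\lambda,z)|\le L(\lambda,0)e^{2\pi|y|}\le e^{2\pi|y|}$ (and similarly for $M$), then invoke Morera's theorem and read off the exponential type from the resulting bound. The only cosmetic difference is that you extract the $\lambda$-uniform pointwise bound first and integrate afterward, whereas the paper bounds the integral $\int_0^\infty|L(\lambda,z)|\,\dnu$ directly; the content is identical.
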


\begin{proof}  We apply (\ref{finite4}) and the fact that $0 \le \tL(\lambda,t)$.  We find that
\begin{align}\label{finite12}
\begin{split}
\int_{0}^{\infty} \left|L(\lambda,z)\right|\ \dnu &=  \int_{0}^{\infty} 
		\left|\int_{-1}^1\tL(\lambda,t)e(tz)\ \dt\right|\ \dnu \\
	&\leq \int_{0}^{\infty} \int_{-1}^1\tL(\lambda,t) e^{-2\pi ty}\ \dt\ \dnu \\
	&\leq e^{2\pi |y|} \int_{0}^{\infty} \int_{-1}^1\tL(\lambda,t)\ \dt\ \dnu \\ 
 	&=  e^{2\pi |y|}\int_{0}^{\infty} L(\lambda,0)\ \dnu.
\end{split}
\end{align}
As $L(\lambda,0) \le 1$ by (\ref{intro6}), it follows from (\ref{finite12}) that
\begin{equation*}
\int_{0}^{\infty} \left|L(\lambda,z)\right|\ \dnu \leq \nu\{(0,\infty)\} e^{2\pi |y|}.
\end{equation*}
This shows that $\lambda\mapsto L(\lambda,z)$
is $\nu$-integrable on $(0,\infty)$ and verifies the bound on the left of (\ref{finite11}). 

In a similar manner we get
\begin{equation}\label{finite13}
\int_{0}^{\infty} \left|M(\lambda,z)\right|\ \dnu \le e^{2\pi |y|}\int_{0}^{\infty} M(\lambda,0)\ \dnu.
\end{equation}
It is clear from (\ref{gv4}) that $z\mapsto M(\lambda,z)$ interpolates the values of the function 
$x\mapsto e^{-\lambda |x|}$ at the integers.  In particular, $M(\lambda,0) = 1$, and therefore (\ref{finite13})
implies that
\begin{equation*}
\int_{0}^{\infty} \left|M(\lambda,z)\right|\ \dnu \le \nu\{(0,\infty)\} e^{2\pi |y|}.
\end{equation*}
Again this shows that $\lambda\mapsto M(\lambda,z)$ is $\nu$-integrable and verifies the bound on the right
of (\ref{finite11}).

It follows easily using Morera's theorem that both $z\mapsto L_{\nu}(z)$ and $z\mapsto M_{\nu}(z)$ are
entire functions.  Then (\ref{finite11}) implies that both of these entire functions have exponential type at 
most $2\pi$.
\end{proof}

Let $\nu$ be a finite measure on the Borel subsets of $(0,\infty)$.  It follows that
\begin{equation}\label{def0}
\Psi_{\nu}(z) = \int_0^{\infty} e^{-\lambda z}\ \dnu
\end{equation}
defines a function that is bounded and continuous in the closed half plane $\{z\in\C: 0\le \Re(z)\}$, and analytic 
in the interior of this half plane.  

\begin{lemma}\label{lem3.5}  If $0 < \beta < \h$, then at each point $z$ in the half plane 
$\{z\in\C:\beta < \Re(z)\}$ we have
\begin{equation}\label{cv6}
\Psi_{\nu}(z) - L_{\nu}(z) = \dfrac{1}{2\pi i} \int_{\beta - i\infty}^{\beta + i\infty} 
	\left(\dfrac{\cos \pi z}{\cos \pi w}\right)^2\left(\dfrac{2w}{z^2 - w^2}\right)\Psi_{\nu}(w)\ \dw.
\end{equation}
If $0 < \beta < 1$ and $a_{\nu} = \nu\{(0,\infty)\}$, then at each point $z$ in the half plane
$\{z\in\C:\beta < \Re(z)\}$ we have 
\begin{equation}\label{cv7}
M_{\nu}(z) - \Psi_{\nu}(z) = \dfrac{1}{2\pi i} 
	\int_{\beta - i \infty}^{\beta + i \infty} \left(\dfrac{\sin \pi z}{\sin \pi w}\right)^2 
                \left( \dfrac{2w}{z^2 - w^2} \right) \bigl(a_{\nu} - \Psi_{\nu}(w)\bigr)\ \dw.
\end{equation}
\end{lemma}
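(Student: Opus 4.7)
The plan is to integrate the two identities of Lemma~\ref{lem3.2} against the finite measure $\nu$ and then swap the order of integration using Fubini's theorem; the inner $\lambda$-integral will reproduce $\Psi_{\nu}(w)$ in the first case and $a_{\nu} - \Psi_{\nu}(w)$ in the second, giving precisely the right hand sides of (\ref{cv6}) and (\ref{cv7}).

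Concretely, for (\ref{cv6}) I begin with the pointwise identity (\ref{cv-1}) and integrate both sides in $\lambda$ against $\dnu$. The left hand side becomes $\Psi_{\nu}(z) - L_{\nu}(z)$ directly from the definitions (\ref{def0}) and (\ref{finite10}); the $\nu$-integrability of $\lambda \mapsto L(\lambda,z)$ is supplied by Lemma~\ref{lem3.4}, while that of $\lambda \mapsto e^{-\lambda z}$ is immediate from $|e^{-\lambda z}| \le 1$ and the finiteness of $\nu$. On the right hand side, after swapping the $\lambda$- and $w$-integrals, the inner integral is $\int_0^{\infty} e^{-w\lambda}\, \dnu = \Psi_{\nu}(w)$, and (\ref{cv6}) drops out. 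For (\ref{cv7}) I proceed identically, starting from (\ref{cv0}); the inner integral after the swap is $\int_0^{\infty} \bigl(1 - e^{-w\lambda}\bigr)\, \dnu = a_{\nu} - \Psi_{\nu}(w)$, yielding the claimed representation.

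The main (and really only) technical point is the justification of Fubini's theorem. Parametrize the contour by $w = \beta + iv$ and fix $z$ with $\Re(z) > \beta$. The factor $|2w/(z^2 - w^2)|$ is bounded on this line by a constant $C(z,\beta)$: it is continuous (since $w \neq \pm z$ on the contour) and of order $2/|v|$ at infinity. The factor $|\cos\pi z / \cos \pi w|^2$ is dominated by $4|\cos \pi z|^2 \sec^2 \pi\beta\, e^{-2\pi|v|}$ by the elementary estimate (\ref{cv31}), and an analogous bound $|\sin\pi z / \sin\pi w|^2 \le C'(z,\beta)\, e^{-2\pi|v|}$ holds for $0 < \beta < 1$ with $\beta$ not an integer. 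For the first identity, pairing these bounds with $|e^{-w\lambda}| = e^{-\beta\lambda} \le 1$ shows that the integrand on $(0,\infty) \times \R$ is dominated by $C(z,\beta)\, e^{-2\pi|v|}$, which is integrable with respect to the product measure $\nu \otimes \dv$ because $\nu$ is finite. For the second identity the uniform bound $|1 - e^{-w\lambda}| \le 1 + e^{-\beta\lambda} \le 2$ plays the analogous role and the same domination argument applies. The hard part of the proof is thus exactly this product-integrability check; once it is in hand, Fubini delivers both identities at a single stroke.
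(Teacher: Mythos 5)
Your proof is correct and follows exactly the paper's approach: integrate the pointwise identities (\ref{cv-1}) and (\ref{cv0}) of Lemma~\ref{lem3.2} against $\dnu$ and interchange the order of integration. The paper states the Fubini interchange without comment, whereas you supply the dominating estimate $C(z,\beta)\,e^{-2\pi|v|}$ on $(0,\infty)\times\R$, which is the right justification and the only point that needed checking.
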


\begin{proof}  We apply (\ref{cv-1}) and get
\begin{align*}
\Psi_{\nu}(z) - &L_{\nu}(z) \\
	     &= \int_{0}^{\infty} \left\{e^{-\lambda z} - L(\lambda, z)\right\}\ \dnu  \\
	     &= \int_{0}^{\infty} \left\{\dfrac{1}{2\pi i} 
	     	\int_{\beta -i \infty}^{\beta + i \infty} \left( \dfrac{\cos \pi z}{\cos \pi w}\right)^2 
		\left( \dfrac{2w}{z^2 - w^2} \right) e^{-w\lambda}\ \dw \right\}\ \dnu \\
	     &= \dfrac{1}{2\pi i} \int_{\beta -i \infty}^{\beta + i \infty} 
		\left( \dfrac{\cos \pi z}{\cos \pi w}\right)^2 \left( \dfrac{2w}{z^2 - w^2} \right) \Psi_{\nu}(w)\ \dw.
\end{align*}
This proves (\ref{cv6}).  Then (\ref{cv0}) leads to (\ref{cv7}) in the same manner.
\end{proof}


\section{Proof of Theorem \ref{thm1.1}}

Let $\mu$ be a measure defined on the Borel subsets of $(0,\infty)$ that satisfies (\ref{intro31}).
Let $z = x+iy$ be a point in the open right half plane $\rr = \{z\in\C: 0<\Re(z)\}$.  Using (\ref{intro31}) we 
find that
\begin{equation*}\label{pt-1}
\lambda\mapsto e^{-\lambda z} - e^{-\lambda}
\end{equation*}
is integrable on $(0,\infty)$ with respect to $\mu$.  We define $F_{\mu}:\rr\rightarrow \C$ by
\begin{equation}\label{pt0}
F_{\mu}(z) = \int_0^{\infty} \big\{e^{-\lambda z} - e^{-\lambda}\big\}\ \dmu.
\end{equation}
It follows by applying Morera's theorem that $F_{\mu}(z)$ is analytic on $\rr$.  Also, at each point $z$ 
in $\rr$ the derivative of $F_{\mu}$ is given by
\begin{equation}\label{pt1}
F_{\mu}^{\prime}(z) = - \int_0^{\infty} \lambda e^{-\lambda z}\ \dmu.
\end{equation}
Then (\ref{pt1}) leads to the bound
\begin{equation}\label{pt2}
\bigl|F_{\mu}^{\prime}(x + iy)\bigr| \le \int_0^{\infty} \lambda e^{-\lambda x}\ \dmu = \bigl|F_{\mu}^{\prime}(x)\bigr|.
\end{equation}
Using (\ref{pt2}) and the dominated convergence theorem we conclude that
\begin{equation}\label{pt3}
\lim_{x\rightarrow \infty} \bigl|F_{\mu}^{\prime}(x + iy)\bigr| = 0
\end{equation}
uniformly in $y$.  Clearly the functions $f_{\mu}(x)$, defined by (\ref{intro33}), and $F_{\mu}(z)$, defined 
by (\ref{pt0}), satisfy the identities
\begin{equation}\label{pt4}
f_{\mu}(x) = F_{\mu}\bigl(|x|\bigr)\quad\text{and}\quad f_{\mu}^{\prime}(x) = \sgn(x)F_{\mu}^{\prime}\bigl(|x|\bigr)
\end{equation}
for all real $x \not= 0$.

\begin{lemma}\label{lem4.1}  The analytic function $F_{\mu}(z)$ defined by {\rm (\ref{pt0})} satisfies each of the
three conditions {\rm (\ref{cv10})}, {\rm (\ref{cv11})}, and {\rm (\ref{cv12})}.
\end{lemma}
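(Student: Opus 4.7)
The plan is to reduce all three conditions (\ref{cv10})--(\ref{cv12}) to pointwise estimates for $F_\mu$ and $F_\mu'$ on the positive real axis, and then integrate against $e^{-2\pi|y|}\,\dy$. The key pointwise estimate is
\begin{equation*}
|F_\mu(x+iy)| \le |F_\mu(x)| + |y|\,|F_\mu'(x)|, \qquad x > 0,\ y\in\R,
\end{equation*}
which follows by writing $F_\mu(x+iy) - F_\mu(x) = i\int_0^y F_\mu'(x+is)\,\ds$ and applying (\ref{pt2}) pointwise in $s$.

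Next I would record the relevant behaviour of $F_\mu$ on $(0,\infty)$. From (\ref{pt3}) one has $|F_\mu'(x)|\to 0$ as $x\to\infty$; moreover, the integrand $\lambda e^{-\lambda u}$ in (\ref{pt1}) is pointwise decreasing in $u$, so $u\mapsto|F_\mu'(u)|$ is decreasing and in particular $|F_\mu'(x)|\le|F_\mu'(\eta)|$ on $[\eta,\infty)$. Since $F_\mu(1)=0$, we have $F_\mu(x)=\int_1^x F_\mu'(u)\,\du$, so $|F_\mu(x)|/x\le |F_\mu'(1)|$ for $x\ge 1$, while continuity of $F_\mu$ on the compact interval $[\eta,1]$ handles $\eta\le x\le 1$; thus $x\mapsto|F_\mu(x)|/x$ is bounded on $[\eta,\infty)$. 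A standard Ces\`aro argument applied to the same formula gives, in addition, $|F_\mu(x)|/x\to 0$ as $x\to\infty$.

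Combining these ingredients with $|z|\ge x$ and $|y|/|z|\le 1$ yields, for $x\ge\eta>0$,
\begin{equation*}
\left|\frac{F_\mu(z)}{z}\right| \le \frac{|F_\mu(x)|}{x} + |F_\mu'(x)|.
\end{equation*}
Multiplying by $e^{-2\pi|y|}$ and integrating in $y\in\R$ then verifies (\ref{cv11}) from the boundedness of the right-hand side on $[\eta,\infty)$, and (\ref{cv12}) from its decay to $0$ at infinity. For (\ref{cv10}), on a compact interval $[a,b]\subset(0,\infty)$ both $|F_\mu|$ and $|F_\mu'|$ are bounded, so using $|z|\ge|y|$ gives $\int_a^b|F_\mu(z)/z|\,\dx \le C/|y|+C'$ uniformly for $|y|$ bounded away from $0$, and the exponential factor forces the limit in (\ref{cv10}) to vanish as $|y|\to\infty$. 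The main subtle point is the Ces\`aro-type asymptotic $|F_\mu(x)|/x\to 0$, which is precisely the extra input needed to upgrade the easier (\ref{cv11}) to (\ref{cv12}); everything else is routine manipulation built on the bound (\ref{pt2}).
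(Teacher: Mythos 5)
Your proof is correct and follows essentially the same approach as the paper's. Both arguments rest on the same three ingredients: the bound $|F_\mu'(x+iy)|\le|F_\mu'(x)|$ from (\ref{pt2}), monotone decay of $|F_\mu'|$ on $(0,\infty)$, and a Ces\`aro argument to convert $|F_\mu'(x)|\to 0$ into $\frac{1}{x}\int_1^x|F_\mu'(u)|\,\du\to 0$; the only cosmetic difference is that the paper bounds $|F_\mu(z)|$ by integrating $F_\mu'$ along the straight segment from $1$ to $z$ and controlling it by $|F_\mu'(\xi)|$ directly for (\ref{cv10})--(\ref{cv11}), then switches to a two-leg horizontal-plus-vertical path for (\ref{cv12}), whereas you use the two-leg path uniformly for all three conditions.
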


\begin{proof}  Let $0 < \xi \le 1$.  If $\xi \le \Re(z)$, then from (\ref{pt2}) we obtain the inequality
\begin{align*}\label{pt5}
\begin{split}
\bigl|F_{\mu}(z)\bigr| &= \left| \int_1^z F_{\mu}^{\prime}(w)\ \dw\right| \\
	&\le |z - 1| \max\big\{\bigl|F_{\mu}^{\prime}(\theta z + 1 - \theta)\bigr|: 0 \le \theta \le 1\big\} \\
	&\le (|z| + 1) \bigl|F_{\mu}^{\prime}(\xi)\bigr|,
\end{split}
\end{align*}
and therefore
\begin{equation}\label{pt6}
\left|\frac{F_{\mu}(z)}{z}\right|\le (1 + \xi^{-1})\bigl|F_{\mu}^{\prime}(\xi)\bigr|.
\end{equation}
The conditions (\ref{cv10}) and (\ref{cv11}) follow from the bound (\ref{pt6}).  

Now assume that $1 \le x = \Re(z)$.  We have
\begin{align*}\label{pt7}
\begin{split}
\bigl|F_{\mu}(x + iy)\bigr| &= \left|\int_1^x F_{\mu}^{\prime}(u)\ \du + i\int_0^y F_{\mu}^{\prime}(x + iv)\ \dv\right| \\
		 &\le \int_1^x \bigr|F_{\mu}^{\prime}(u)\bigr|\ \du + |y|\bigl|F_{\mu}^{\prime}(x)\bigr|,
\end{split}
\end{align*}
and therefore
\begin{equation}\label{pt8}
\left|\frac{F_{\mu}(x + iy)}{x + iy}\right|
	\le \frac{1}{x}\int_1^x \bigr|F_{\mu}^{\prime}(u)\bigr|\ \du + \bigl|F_{\mu}^{\prime}(x)\bigr|.
\end{equation} 
Then (\ref{pt3}) and (\ref{pt8}) imply that
\begin{equation*}\label{pt9}
\lim_{x\rightarrow \infty} \left|\frac{F_{\mu}(x + iy)}{x+iy}\right| = 0
\end{equation*}
uniformly in $y$.  The remaining condition (\ref{cv12}) follows from this.
\end{proof}

We are now in position to apply the results of section 2 and section 3 to the function $F_{\mu}(z)$.
In view of the identities (\ref{pt4}), the entire function $G_{\mu}(z)$, defined by (\ref{intro36}), and the entire
function $\G(F_{\mu}, z)$, defined by (\ref{cv34}), are equal.  If $0 < \beta < \h$, and $\beta < \Re(z)$, then 
from (\ref{cv41}) of Lemma \ref{lem2.4} we have
\begin{equation}\label{pt10}
F_{\mu}(z) - G_{\mu}(z) = I(\beta, F_{\mu}; z).
\end{equation}
Applying Lemma \ref{lem2.6} we conclude that $G_{\mu}(z)$ is an entire function of exponential type at most $2\pi$.
This verifies (i) in the statement of Theorem \ref{thm1.1}.

Next we define a sequence of measures $\nu_1, \nu_2, \nu_3, \dots $ on Borel subsets $E\subseteq (0,\infty)$ by
\begin{equation}\label{pt20}
\nu_n(E) = \int_E \bigl(e^{-\lambda/n} - e^{-\lambda n}\bigr)\ \dmu,\quad\text{for}\quad n = 1, 2, \dots .
\end{equation}
Then
\begin{align*}
\nu_n\{(0,\infty)\} &= \int_0^{\infty} \int_{1/n}^n \lambda e^{-\lambda u}\ \du\ \dmu \\
	&= - \int_{1/n}^n F_{\mu}^{\prime}(u)\ \du \\
	&= F_{\mu}(1/n) - F_{\mu}(n) < \infty, 
\end{align*}
and therefore $\nu_n$ is a finite measure for each $n$.  It will be convenient to simplify (\ref{finite10}) and
(\ref{def0}).  For $z$ in $\C$ and $n$ a positive integer we write
\begin{equation}\label{pt21}
L_n(z) =\int_{0}^{\infty} L(\lambda,z)\ \dnnu,
\end{equation} 
and for $z$ in $\rr$ we write
\begin{equation}\label{pt22}
\Psi_n(z) = \int_0^{\infty} e^{-\lambda z}\ \dnnu.
\end{equation}
It follows from Lemma \ref{lem3.4} that $L_n(z)$ is an entire function of exponential type at most $2\pi$.
If $0 < \beta < \h$ then (\ref{cv48}) and (\ref{cv6}) imply that
\begin{equation}\label{pt23}
\Psi_n(z) - L_n(z) = I(\beta, \Psi_n; z) = I\bigl(\beta, \Psi_n - \Psi_n(1); z\bigr)
\end{equation}
for all complex $z$ such that $\beta < \Re(z)$.  From the definitions (\ref{pt20}), (\ref{pt21}), and 
(\ref{pt22}), we find that
\begin{equation}\label{pt24}
\Psi_n(x) - L_n(x) 
	= \int_0^{\infty} \bigl(e^{-\lambda x} - L(\lambda, x)\bigr)\bigl(e^{-\lambda/n} - e^{-\lambda n}\bigr)\ \dmu
\end{equation}
for all positive real $x$.   

Let $w = u + iv$ be a point in $\rr$.  Then
\begin{equation}\label{pt26}
\Psi_n(w) - \Psi_n(1) = \int_0^{\infty}\bigl(e^{-\lambda w} - e^{-\lambda}\bigr)
		\bigl(e^{-\lambda/n} - e^{-\lambda n}\bigr)\ \dmu,
\end{equation}
and
\begin{equation*}\label{pt27}
\bigl|e^{-\lambda/n} - e^{-\lambda n}\bigr| \le 1
\end{equation*}
for all positive real $\lambda$ and positive integers $n$.  We let $n\rightarrow \infty$ on both sides
of (\ref{pt26}) and apply the dominated convergence theorem.  In this way we conclude that
\begin{equation}\label{pt28}
\lim_{n\rightarrow \infty} \Psi_n(w) - \Psi_n(1) = F_{\mu}(w)
\end{equation}
at each point $w$ in $\rr$.  If $0 < \beta < \h$ then, as in the 
proof of Lemma \ref{lem4.1}, on the line $\beta = \Re(w)$ we have
\begin{align*}\label{pt28}
\bigl|\Psi_n(w) - \Psi_n(1)\bigr| &\le \int_0^{\infty}\left|\int_1^w \lambda e^{-\lambda t}\ \dt\right|\ \dmu \\
	 &\le (|w| + 1)\bigl|F_{\mu}^{\prime}(\beta)\bigr|.
\end{align*}
It follows that
\begin{equation*}\label{pt30}
\left|\frac{\Psi_n(w) - \Psi_n(1)}{w}\right|
\end{equation*}
is bounded on the line $\beta = \Re(w)$.  From this observation, together with (\ref{pt23}) and (\ref{pt28}),
we conclude that
\begin{align}\label{pt31}
\begin{split}
\lim_{n\rightarrow \infty} \Psi_n(z) - L_n(z) &= \lim_{n\rightarrow \infty} I(\beta, \Psi_n - \Psi_n(1); z) \\ 
	&= I(\beta, F_{\mu}; z) \\ 
	&= F_{\mu}(z) - G_{\mu}(z)
\end{split} 
\end{align}
at each complex number $z$ with $\beta < \Re(z)$.  In particular, we have
\begin{equation}\label{pt32}
\lim_{n\rightarrow \infty} \Psi_n(x) - L_n(x) = F_{\mu}(x) - G_{\mu}(x)
\end{equation}
for all positive $x$.  We combine (\ref{pt24}), (\ref{pt32}), and use the monotone convergence theorem.  This leads
to the identity
\begin{equation}\label{pt33}
F_{\mu}(x) - G_{\mu}(x) = \int_0^{\infty} \bigl(e^{-\lambda x} - L(\lambda, x)\bigr)\ \dmu
\end{equation}
for all positive $x$.  Then we use the identity on the left of (\ref{pt4}), and the fact that $x\mapsto G_{\mu}(x)$
is an even function, to write (\ref{pt33}) as
\begin{equation}\label{pt34}  
f_{\mu}(x) - G_{\mu}(x) = \int_0^{\infty} \bigl(e^{-\lambda |x|} - L(\lambda, x)\bigr)\ \dmu
\end{equation}
for all $x\not= 0$.  If $f_{\mu}(0)$ is finite then (\ref{pt34}) holds at $x = 0$ by continuity.  If 
$f_{\mu}(0) = \infty$ then both sides of (\ref{pt34}) are $\infty$.  And (\ref{intro6}) implies that (\ref{pt34})
is nonnegative for all real $x$.  This establishes both (ii) and (iii) in the statement of Theorem \ref{thm1.1}. 

Because the integrand on the right of (\ref{pt34}) is nonnegative, we get
\begin{align}\label{pt40}
\begin{split}
\int_{-\infty}^{\infty} \big\{f_{\mu}(x) - G_{\mu}(x)\big\}\ \dx
 	&= \int_{-\infty}^{\infty} \int_0^{\infty} \bigl(e^{-\lambda |x|} - L(\lambda, x)\bigr)\ \dmu \dx \\
	&= \int_0^{\infty} \int_{-\infty}^{\infty} \bigl(e^{-\lambda |x|} - L(\lambda, x)\bigr)\ \dx \dmu \\
	&= \int_0^{\infty} \big\{\tfrac{2}{\lambda} - \csch\left(\tfrac{\lambda}{2}\right)\big\}\ \dmu.
\end{split}
\end{align}
by Fubini's theorem.  This proves (iv) in the statement of Theorem \ref{thm1.1}.  Similarly, if $t\not= 0$ we find that
\begin{align}\label{pt41}
\begin{split}
\int_{-\infty}^{\infty} &\big\{f_{\mu}(x) - G_{\mu}(x)\big\}e(-tx)\ \dx \\
	&= \int_{-\infty}^{\infty} \Bigg\{\int_0^{\infty} \bigl(e^{-\lambda |x|} 
		- L(\lambda, x)\bigr)\ \dmu\Bigg\}e(-tx) \ \dx \\
	&= \int_0^{\infty} \Bigg\{\int_{-\infty}^{\infty} \bigl(e^{-\lambda |x|} 
		- L(\lambda, x)\bigr)e(-tx) \ \dx\Bigg\}\ \dmu \\
	&= \int_0^{\infty} \Big\{\frac{2\lambda}{\lambda^2 + 4\pi^2t^2}\Big\}\ \dmu
	 	- \int_0^{\infty} \tL(\lambda, t)\ \dmu.
\end{split}
\end{align}
This proves (v) in Theorem \ref{thm1.1}.

Finally, we assume that $\tG(z)$ is a real entire function of exponential type at most $2\pi$ such that
\begin{equation}\label{pt44}
\tG(x) \le f_{\mu}(x)
\end{equation}
for all real $x$.  Obviously (\ref{intro45}) is trivial if the integral on the right of (\ref{intro45}) is infinite.
Hence we may assume that
\begin{equation}\label{pt45}
\int_{-\infty}^{\infty} \left\{f_{\mu}(x) - \tG(x)\right\}\ \dx < \infty.
\end{equation}
Then (\ref{intro45}) is equivalent to 
\begin{equation}\label{pt46}
0 \le \int_{-\infty}^{\infty} \left\{G_{\mu}(x) - \tG(x)\right\}\ \dx. 
\end{equation}
As $G_{\mu}(z) - \tG(z)$ is a real entire function of exponential type at most $2\pi$ and
is integrable on $\R$, we can apply \cite[Lemma 4]{GV}.  By that result we get
\begin{align}\label{pt47}
\begin{split}
\lim_{N\rightarrow \infty} \sum_{n=-N}^N \left(1 - \frac{|n|}{N}\right)\big\{G_{\mu}(n - \hh)&- \tG(n - \hh)\big\} \\
	&= \int_{-\infty}^{\infty} \left\{G_{\mu}(x) - \tG(x)\right\}\ \dx.
\end{split}
\end{align}
It follows from (\ref{intro37}) and (\ref{pt44}) that
\begin{equation}\label{pt48}
0 \le G_{\mu}(n - \hh) - \tG(n - \hh)
\end{equation}
for each integer $n$.  Therefore (\ref{pt47}) and (\ref{pt48}) imply that the integral (\ref{pt46}) is nonnegative.  This
proves (vi) in the statement of Theorem \ref{thm1.1}.
If the value of the integral (\ref{pt46}) is zero, then we have
\begin{equation*}\label{pt49}
0 = G_{\mu}(n - \hh) - \tG(n - \hh)
\end{equation*}
for each integer $n$.  It follows that
\begin{equation*}\label{pt50}
G_{\mu}(n - \hh) = \tG(n - \hh) = f_{\mu}(n - \hh)
\end{equation*}
at each integer $n$.  As both $G_{\mu}(x) \le f_{\mu}(x)$ and $\tG(x) \le f_{\mu}(x)$ for all real $x$, we find that
\begin{equation}\label{pt51}
G_{\mu}^{\prime}(n - \hh) = \tG^{\prime}(n - \hh) = f_{\mu}^{\prime}(n - \hh)
\end{equation}
for each integer $n$.   A second application of \cite[Lemma 4]{GV} shows that $G_{\mu}(z) = \tG(z)$ for all complex $z$.
This completes the proof of (vii) in Theorem \ref{thm1.1}.


\section{Proof of Theorem \ref{thm1.2}}

Let $\mu$ be a measure defined on the Borel subsets of $(0,\infty)$ that satisfies (\ref{intro47}). We keep here the 
same notation used in the proof of Theorem \ref{thm1.1}. Observe that the entire function $\sH(F_{\mu},z)$ defined in 
(\ref{cv36}) and the function $H_{\mu}(z)$ defined by (\ref{intro49}) satisfy
\begin{equation}\label{Sec5.1}
H_{\mu}(z) = \sH(F_{\mu},z) + \left(\dfrac{\sin \pi z}{\pi z} \right)^2 f_{\mu}(0)
\end{equation}
It follows from (\ref{Sec5.1}) and Lemma \ref{lem2.6} that $H_{\mu}(z)$ is an entire function of exponential type at 
most $2\pi$. This verifies (i) in the statement of Theorem \ref{thm1.2}. If $0< \beta < 1$ and $\beta < \Re(z)$, then 
from (\ref{Sec5.1}), (\ref{cv43}) of Lemma \ref{lem2.4} and (\ref{cv49}) we have
\begin{equation}\label{Sec5.2}
H_{\mu}(z) - F_{\mu}(z) = J(\beta, f_{\mu}(0) - F_{\mu};z)
\end{equation}
For the measures $\nu_n$ defined in (\ref{pt20}) we write
\begin{equation*}\label{Sec5.3}
a_n = \nu_n\{(0,\infty)\}
\end{equation*}
For $z \in \C$ we also define
\begin{equation}\label{Sec5.4}
M_n(z) =\int_{0}^{\infty} M(\lambda,z)\ \dnnu, 
\end{equation}
which is an entire function of exponential type at most $2\pi$ by Lemma \ref{lem3.4}. If $0< \beta < 1$ and 
$\beta < \Re(z)$, from (\ref{pt22}) and (\ref{cv7}) we have
\begin{equation}\label{Sec5.5}
M_n(z) - \Psi_n(z) = J(\beta, a_n - \Psi_n; z). 
\end{equation}
Let $w = u + i v$ be a point in $\rr$. Then
\begin{equation}\label{Sec5.7}
a_n - \Psi_n(w) = \int_0^{\infty}\bigl( 1 - e^{-\lambda w}\bigr)
		\bigl(e^{-\lambda/n} - e^{-\lambda n}\bigr)\ \dmu.
\end{equation}
Since $\bigl|\,e^{-\lambda/n} - e^{-\lambda n}\bigr| \leq 1$, by dominated convergence we have 
\begin{equation}\label{Sec5.8}
\lim_{n\rightarrow \infty} a_n - \Psi_n(w) = f_{\mu}(0) - F_{\mu}(w).
\end{equation}
If $0< \beta<1$, then on the line $\beta = \Re(w)$ we have from (\ref{Sec5.7})
\begin{align*}\label{Sec5.9}
\bigl|a_n - \Psi_n(w)\bigr| &\le \int_0^{\infty}\left|\int_0^w \lambda e^{-\lambda s}\ \ds\right|\ \dmu \\
   & \le \int_0^{\infty}\left\{ \left|\int_0^{\beta} \lambda e^{-\lambda s}\ \ds\right|\ 
		+ \left|\int_{\beta}^{\beta + i v} \lambda e^{-\lambda s}\ \ds\right|\ \right\}\dmu \\
   & \le \int_0^{\beta} \int_0^{\infty} \lambda e^{-\lambda s}\ \dmu \ \ds + |v| \int_0^{\infty}\lambda e^{-\lambda \beta}\ \dmu \\
   & = -\int_0^{\beta} F_{\mu}^{\prime}(s)\ \ds + |v|\bigl|F_{\mu}^{\prime}(\beta)\bigr| \\
	 & = f_{\mu}(0) - F_{\mu}(\beta) + |v|\bigl|F_{\mu}^{\prime}(\beta)\bigr|.
\end{align*}
It follows that 
\begin{equation*}\label{Sec5.10}
\left|\frac{a_n - \Psi_n(w)}{w}\right|
\end{equation*}
is bounded on the line $\beta = \Re(w)$. From this observation, (\ref{Sec5.5}), and (\ref{Sec5.8}), we conclude that 
\begin{align}\label{Sec5.11}
\begin{split}
\lim_{n\rightarrow \infty} M_n(z) - \Psi_n(z)  &= \lim_{n\rightarrow \infty} J(\beta, a_n - \Psi_n; z) \\ 
	&= J(\beta, f_{\mu}(0) - F_{\mu}; z) \\ 
	&= H_{\mu}(z) - F_{\mu}(z)
\end{split} 
\end{align}
for each complex number $\beta < \Re(z)$.  As 
\begin{equation}\label{Sec5.12}
M_n(x) - \Psi_n(x) 
	= \int_0^{\infty} \bigl( M(\lambda, x)- e^{-\lambda x} \bigr)\bigl(e^{-\lambda/n} - e^{-\lambda n}\bigr)\ \dmu
\end{equation}
for all positive real $x$, the monotone convergence theorem, together with (\ref{Sec5.11}), leads to the identity
\begin{equation}\label{Sec5.13}
H_{\mu}(x) - F_{\mu}(x) = \int_0^{\infty} \bigl( M(\lambda, x)- e^{-\lambda x} \bigr)\ \dmu
\end{equation}
for all positive $x$.  Then we use the identity on the left of (\ref{pt4}), and the fact that $x\mapsto H_{\mu}(x)$
is an even function, to write (\ref{Sec5.13}) as
\begin{equation}\label{Sec5.14}  
H_{\mu}(x) - f_{\mu}(x) = \int_0^{\infty} \bigl( M(\lambda, x)- e^{-\lambda |x|} \bigr)\ \dmu
\end{equation}
for all $x\neq 0$. At $x=0$ both sides of (\ref{Sec5.14}) are zero. From (\ref{intro6}) we conclude that (\ref{Sec5.14}) 
is nonnegative for all real $x$.  This establishes both (ii) and (iii) in the statement of Theorem \ref{thm1.2}.

The proofs of parts (iv)-(vii) of Theorem \ref{thm1.2} are similar to the corresponding versions for Theorem \ref{thm1.1}. 
There is just one detail in the proof of part (vii) that we should point out. When considering the case of equality 
in (\ref{intro57}) one shows that
\begin{equation*} 
H_{\mu}(n) = \tH(n) = f_{\mu}(n)
\end{equation*}
at each integer $n$. The fact that both  $H_{\mu}(x) \geq f_{\mu}(x)$ and $\tH_{\mu}(x) \geq f_{\mu}(x)$ for all real $x$ 
is sufficient to conclude that 
\begin{equation*}
H_{\mu}^{\prime}(n) = \tH^{\prime}(n) = f_{\mu}^{\prime}(n)
\end{equation*}
at each nonzero integer $n$, since $f_{\mu}$ is not necessarily differentiable at $x=0$. However, an application 
of \cite[Lemma 4, equation 2.3]{GV}  allows us to conclude that 
\begin{equation*}
H_{\mu}^{\prime}(0) = \tH^{\prime}(0). 
\end{equation*}
A further application of \cite[Lemma 4]{GV} proves that $H_{\mu}(z) = \tH(z)$ for all complex $z$. 

\section{Extremal trigonometric polynomials}

We consider the problem of majorizing and minorizing certain real valued periodic functions by real
trigonometric polynomials of bounded degree.  We identify functions defined on $\R$ and having period $1$
with functions defined on the compact quotient group $\R/\Z$.  For real numbers $x$ we write
\begin{equation*}
\|x\| = \min\{|x - m|: m\in\Z\}
\end{equation*}
for the distance from $x$ to the nearest integer.  Then $\|\ \|:\R/\Z\rightarrow [0,\h]$ is well defined, and
$(x,y)\rightarrow \|x - y\|$ defines a metric on $\R/\Z$ which induces its quotient topology.  Integrals over 
$\R/\Z$ are with respect to Haar measure normalized so that $\R/\Z$ has measure $1$.  

Let $F:\C\rightarrow \C$ be an entire function of exponential type at most $2\pi\delta$, where $\delta$ is a 
positive parameter, and assume that $x\mapsto F(x)$ is integrable on $\R$.  Then the Fourier transform
\begin{equation}\label{ef0}
\tF(t) = \int_{-\infty}^{\infty} F(x)e(-tx)\ \dx
\end{equation}
is a continuous function on $\R$.  By classical results of Plancherel and Polya \cite{PP} (see also 
\cite[Chapter 2, Part 2, section 3]{Y}) we have
\begin{equation}\label{ef1}
\sum_{m=-\infty}^{\infty} |F(\alpha_m)| \le C_1(\epsilon, \delta) \int_{-\infty}^{\infty} |F(x)|\ \dx,
\end{equation}
where $m\mapsto \alpha_m$ is a sequence of real numbers such that $\alpha_{m+1} - \alpha_m \ge \epsilon > 0$, and 
\begin{equation}\label{ef2}
\int_{-\infty}^{\infty} |F^{\prime}(x)|\ \dx \le C_2(\delta) \int_{-\infty}^{\infty} |F(x)|\ \dx.
\end{equation}
Plainly (\ref{ef1}) implies that $F$ is uniformly bounded on $\R$, and therefore $x\mapsto |F(x)|^2$ is integrable.
Then it follows from the Paley-Wiener theorem (see \cite[Theorem 19.3]{Rudin}) that $\tF(t)$ is supported
on the interval $[-\delta, \delta]$.  

The bound (\ref{ef2}) implies that $x\mapsto F(x)$ has bounded variation on $\R$.  Therefore the Poisson 
summation formula (see \cite[Volume I, Chapter 2, section 13]{Z}) holds as a pointwise identity
\begin{equation}\label{ef3}
\sum_{m=-\infty}^{\infty} F(x + m) = \sum_{n=-\infty}^{\infty} \tF(n) e(nx),
\end{equation}
for all real $x$.  It follows from (\ref{ef1}) that the sum on the left of (\ref{ef3}) is absolutely
convergent.  As the continuous function $\tF(t)$ is supported on $[-\delta, \delta]$, the sum
on the right of (\ref{ef3}) has only finitely many nonzero terms, and so defines a trigonometric polynomial
in $x$.

Next we consider the entire functions $z\mapsto L\bigl(\delta^{-1}\lambda, \delta z\bigr)$ and 
$z\mapsto M\bigl(\delta^{-1}\lambda, \delta z\bigr)$.  These functions have exponential type at most $2\pi\delta$.
Therefore we apply (\ref{ef3}) and obtain the identities
\begin{equation}\label{ef4}
\sum_{m=-\infty}^{\infty} L\bigl(\delta^{-1}\lambda, \delta(x + m)\bigr) 
	= \delta^{-1} \sum_{|n| \le \delta} \tL\bigl(\delta^{-1}\lambda, \delta^{-1}n\bigr) e(nx)
\end{equation}
and
\begin{equation}\label{ef5}
\sum_{m=-\infty}^{\infty} M\bigl(\delta^{-1}\lambda, \delta(x + m)\bigr) 
	= \delta^{-1} \sum_{|n| \le \delta} \tM\bigl(\delta^{-1}\lambda, \delta^{-1}n\bigr) e(nx)
\end{equation}
for all real $x$, and for all positive values of the parameters $\delta$ and $\lambda$.  For our purposes it will 
be convenient to use (\ref{ef4}) and (\ref{ef5}) with $\delta = N+1$, where $N$ is a nonnegative integer, and to modify
the constant term.  Therefore we define trigonometric polynomials of degree $N$ by
\begin{align}\label{ef6}
\begin{split}
l(\lambda, N; x) &= -\tfrac{2}{\lambda} + \tfrac{1}{N+1} \sum_{n=-N}^N \tL\bigl(\tfrac{\lambda}{N+1}, \tfrac{n}{N+1}\bigr) e(nx) \\
	 &= -\big\{\tfrac{2}{\lambda} - \tfrac{1}{N+1}\csch\bigl(\tfrac{\lambda}{2N+2}\bigr)\big\} + 
		\tfrac{1}{N+1} \sum_{1\le |n|\le N} \tL\bigl(\tfrac{\lambda}{N+1}, \tfrac{n}{N+1}\bigr) e(nx),	 
\end{split}
\end{align}
and
\begin{align}\label{ef7}
\begin{split}
m(\lambda, N; x) &= -\tfrac{2}{\lambda} + \tfrac{1}{N+1}\sum_{n=-N}^N \tM\bigl(\tfrac{\lambda}{N+1}, \tfrac{n}{N+1}\bigr) e(nx) \\
	 &= \big\{\tfrac{1}{N+1}\coth\bigl(\tfrac{\lambda}{2N+2}\bigr) - \tfrac{2}{\lambda}\big\} + 
		\tfrac{1}{N+1} \sum_{1\le |n|\le N} \tM\bigl(\tfrac{\lambda}{N+1}, \tfrac{n}{N+1}\bigr) e(nx).	 
\end{split}
\end{align}
We note that
\begin{equation}\label{ef8}
\int_{\R/\Z} l(\lambda, N; x)\ \dx = -\big\{\tfrac{2}{\lambda} 
	- \tfrac{1}{N+1}\csch\bigl(\tfrac{\lambda}{2N+2}\bigr)\big\} < 0, 
\end{equation}
and 
\begin{equation}\label{ef9}
\int_{\R/\Z} m(\lambda, N; x)\ \dx = \big\{\tfrac{1}{N+1}\coth\bigl(\tfrac{\lambda}{2N+2}\bigr) 
	- \tfrac{2}{\lambda}\big\} > 0.
\end{equation}

For $0 < \lambda$ the function $x\mapsto e^{-\lambda |x|}$ is continuous, integrable on $\R$, and has
bounded variation.  Therefore the Poisson summation formula also provides the pointwise identity
\begin{equation}\label{ef10}
\sum_{m=-\infty}^{\infty} e^{-\lambda |x + m|} 
	= \sum_{n=-\infty}^{\infty} \frac{2\lambda}{\lambda^2 + 4\pi^2n^2}e(nx).
\end{equation}
And we find that
\begin{equation}\label{ef11}
\sum_{m=-\infty}^{\infty} e^{-\lambda |x + m|} 
	= \frac{\cosh\bigl(\lambda(x - [x] - \hh)\bigr)}{\sinh\bigl(\tfrac{\lambda}{2}\bigr)},
\end{equation}
where $[x]$ is the integer part of the real number $x$.  For our purposes it will be convenient to define
\begin{equation*}\label{ef12}
p:(0,\infty)\times \R/\Z \rightarrow \R 
\end{equation*}
by
\begin{equation}\label{ef13}
p(\lambda, x) = - \tfrac{2}{\lambda} + \sum_{m=-\infty}^{\infty} e^{-\lambda |x + m|}.
\end{equation}
Then $p(\lambda, x)$ is continuous on $(0,\infty)\times \R/\Z$, and differentiable with respect to $x$ at 
each noninteger point $x$.  It follows from (\ref{ef10}) that the Fourier coefficients of 
$x\mapsto p(\lambda, x)$ are given by
\begin{equation}\label{ef15}
\int_{\R/\Z} p(\lambda, x)\ \dx = 0,
\end{equation}
and
\begin{equation}\label{ef16}
\int_{\R/\Z} p(\lambda, x)e(-nx)\ \dx = \frac{2\lambda}{\lambda^2 + 4\pi^2n^2}
\end{equation}
for integers $n\not= 0$. 

\begin{theorem}\label{thm6.1}  Let $\lambda$ be a positive real number and $N$ a nonnegative integer. 
\begin{itemize}
\item[(i)]  The inequality
\begin{equation}\label{ef21}
l(\lambda, N; x) \le p(\lambda, x) \le m(\lambda, N; x)
\end{equation}
holds at each point $x$ in $\R/\Z$.  
\item[(ii)]  There is equality in the inequality on the left of {\rm (\ref{ef21})} for
\begin{equation}\label{ef22}
x = \tfrac{n - \h}{N+1}\quad\text{and}\quad n = 1, 2, \dots , N+1,
\end{equation}
and there is equality in the inequality on the right of {\rm (\ref{ef21})} for
\begin{equation}\label{ef23}
x = \tfrac{n}{N+1}\quad\text{and}\quad n = 1, 2, \dots , N+1.
\end{equation}
\item[(iii)]  If $\wl(x)$ is a real trigonometric polynomial of degree at most $N$ such that 
\begin{equation*}
\wl(x) \le p(\lambda, x)
\end{equation*}
at each point $x$ in $\R/\Z$, then
\begin{equation}\label{ef24}
\int_{\R/\Z} \wl(x)\ \dx \le \int_{\R/\Z} l(\lambda, N; x)\ \dx.
\end{equation}
\item[(iv)]  If $\wm(x)$ is a real trigonometric polynomial of degree at most $N$ such that 
\begin{equation*}
p(\lambda, x) \le \wm(x)
\end{equation*}
at each point $x$ in $\R/\Z$, then
\begin{equation}\label{ef25}
\int_{\R/\Z} m(\lambda, N; x)\ \dx \le \int_{\R/\Z} \wm(x)\ \dx.
\end{equation}
\item[(v)]  There is equality in the inequality {\rm (\ref{ef24})} if and only if $\wl(x) = l(\lambda, N; x)$,
and there is equality in the inequality {\rm (\ref{ef25})} if and only if $\wm(x) = m(\lambda, N; x)$.
\end{itemize}
\end{theorem}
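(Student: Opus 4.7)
The plan is to reduce everything to the results already proved in Section~3 together with the Poisson summation identities \eqref{ef4}--\eqref{ef5}. Using that $\widehat L(\tau,\pm 1)=\widehat M(\tau,\pm 1)=0$ (visible from Lemma~\ref{lem3.3}) and choosing $\delta=N+1$, the identities \eqref{ef4}--\eqref{ef5} rewrite the definitions \eqref{ef6}--\eqref{ef7} as
\[
l(\lambda,N;x) = -\tfrac{2}{\lambda} + \sum_{m\in\Z} L\bigl(\tfrac{\lambda}{N+1},(N+1)(x+m)\bigr),
\]
and similarly for $m(\lambda,N;x)$ with $M$ in place of $L$, while $p(\lambda,x) = -\tfrac{2}{\lambda} + \sum_{m\in\Z} e^{-\lambda|x+m|}$. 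For (i), I apply the pointwise inequality \eqref{intro6} with parameter $\tau = \lambda/(N+1)$ and argument $y = (N+1)(x+m)$; since $\tau|y|=\lambda|x+m|$, this gives $L(\tau,y)\le e^{-\lambda|x+m|}\le M(\tau,y)$ term by term, and summing over $m$ and subtracting $2/\lambda$ yields \eqref{ef21}. For (ii), at a point of the form \eqref{ef22} each $(N+1)(x+m)$ lies in $\Z+\h$, so the interpolation identities \eqref{gv3} give equality in every summand on the $L$ side; the same works at \eqref{ef23} using \eqref{gv4}.

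For the extremal properties (iii)--(iv), the key ingredient is the equal-weight quadrature identity on $N+1$ equispaced nodes,
\[
\int_{\R/\Z} q(x)\,\dx = \frac{1}{N+1}\sum_{n=1}^{N+1} q(x_n),
\]
which is valid for every real trigonometric polynomial $q$ of degree at most $N$; this is an elementary computation from $\sum_{n=1}^{N+1} e(k x_n)=0$ for $0<|k|\le N$. Taking $x_n = (n-\h)/(N+1)$ (for the minorant case) and applying this to $q=l(\lambda,N;\,\cdot\,)-\widetilde l$, one uses (ii) and $\widetilde l(x_n)\le p(\lambda,x_n)$ to get $q(x_n)\ge 0$, hence \eqref{ef24}. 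Taking $x_n = n/(N+1)$ and $q = \widetilde m - m(\lambda,N;\,\cdot\,)$ gives \eqref{ef25}.

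For part (v), assume equality in \eqref{ef24}. Then every term in the quadrature sum vanishes, so $\widetilde l(x_n)=l(\lambda,N;x_n)=p(\lambda,x_n)$ at the $N+1$ nodes. Each $x_n=(n-\h)/(N+1)$ lies strictly inside $(0,1)$ away from integers, so $p(\lambda,\,\cdot\,)$ is smooth there, and the nonnegative functions $p-\widetilde l$ and $p-l$ both attain their minimum value $0$ at $x_n$; hence $\widetilde l'(x_n)=p'(\lambda,x_n)=l'(\lambda,N;x_n)$. Thus $l-\widetilde l$ has a double zero at each $x_n$, giving at least $2(N+1)=2N+2$ zeros counted with multiplicity in $\R/\Z$. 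Since a nonzero real trigonometric polynomial of degree at most $N$ has at most $2N$ zeros counted with multiplicity, we conclude $l-\widetilde l\equiv 0$.

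The main obstacle is the uniqueness half of (v) for the majorant: one of the quadrature nodes, $x_{N+1}=1\equiv 0\pmod{\Z}$, is precisely where $p(\lambda,\,\cdot\,)$ has a corner, so the derivative-matching argument fails there. I circumvent this by noting that at the remaining $N$ nodes $p$ is smooth, yielding $N$ double zeros of $m-\widetilde m$ and $2N$ zeros counted with multiplicity; adjoining the simple zero at $x\equiv 0$ gives $2N+1>2N$ zeros, which is still enough to force $m-\widetilde m\equiv 0$. Everything else is bookkeeping atop the framework built in Sections~2 and~3.
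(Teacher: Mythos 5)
Your proof follows the same strategy as the paper: part (i) is obtained by summing the pointwise inequality (\ref{intro6}) over the lattice and invoking the Poisson summation identities (\ref{ef4})--(\ref{ef5}) with $\delta = N+1$; part (ii) comes from the interpolation identities (\ref{gv3}) and (\ref{gv4}); parts (iii)--(iv) follow from the exact equal-weight quadrature rule at $N+1$ equally spaced nodes together with part (ii); and part (v) uses the standard zero-counting bound for real trigonometric polynomials of degree at most $N$. So this is not a genuinely different route, and all the steps you supply are correct.

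There is one place where you go beyond what the paper writes, and it is a worthwhile observation. For the uniqueness statement on the minorant side, the paper proves carefully that $l - \widetilde l$ has a double zero at each of the $N+1$ nodes $(n - \tfrac12)/(N+1)$, all of which lie away from $\Z$, and then invokes the $2N$-zero bound. For the majorant side the paper simply says ``in a similar manner,'' but the analogous argument would need derivative matching at the node $x = (N+1)/(N+1) \equiv 0 \pmod{\Z}$, where $p(\lambda, \cdot)$ has a corner and the first-order argument from the equality $\widetilde m(0) = m(\lambda, N; 0) = p(\lambda, 0)$ does not force $\widetilde m'(0) = m'(\lambda, N; 0)$. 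Your repair --- take double zeros at the $N$ interior nodes $n/(N+1)$, $n = 1, \dots, N$, where $p$ is smooth, plus a simple zero at $0$, which gives $2N + 1 > 2N$ zeros counted with multiplicity --- is exactly the right fix and makes the deduction $\widetilde m \equiv m(\lambda, N; \cdot)$ airtight. Everything else matches the paper's argument.
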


\begin{proof}  From the inequality (\ref{intro6}) we have
\begin{equation}\label{ef28}
L\bigl(\tfrac{\lambda}{N+1}, (N+1)|x + m|\bigr) \le e^{-\lambda |x + m|} \le M\bigl(\tfrac{\lambda}{N+1}, (N+1)|x + m|\bigr)
\end{equation}
for all real $x$ and integers $m$.  We sum (\ref{ef28}) over integers $m$ in $\Z$, and use (\ref{ef4}) and (\ref{ef5}) with 
$\delta = N+1$.  Then (\ref{ef21}) follows from the definitions (\ref{ef6}), (\ref{ef7}), and (\ref{ef13}).

It follows from (\ref{gv3}) that the entire function $z\mapsto L(\lambda, z)$ interpolates the values of
$x\mapsto e^{-\lambda |x|}$ at real numbers $x$ such that $x + \h$ is an integer.  That is, there is equality in 
the inequality
\begin{equation*}
L(\lambda, x) \le e^{-\lambda |x|}
\end{equation*}
whenever $x = n - \h$ with $n$ in $\Z$.  Hence there is equality in the inequality 
\begin{equation*}\label{ef29}
L\bigl(\tfrac{\lambda}{N+1}, (N+1)|x + m|\bigr) \le e^{-\lambda |x + m|} 
\end{equation*}
whenever $x$ has the form indicated in (\ref{ef22}) and $m$ is an integer.  This implies that there is equality in the
inequality on the left of (\ref{ef21}) when $x$ has the form (\ref{ef22}).

In a similar manner, it follows from (\ref{gv4}) that there is
equality in the inequality 
\begin{equation*}\label{ef30}
e^{-\lambda |x|} \le M(\lambda, x)
\end{equation*}
whenever $x = n$ with $n$ in $\Z$.  Hence there is equality in the inequality
\begin{equation*}\label{ef31}
e^{-\lambda |x + m|} \le M\bigl(\tfrac{\lambda}{N+1}, (N+1)|x + m|\bigr)
\end{equation*}
whenever $x$ has the form indicated in (\ref{ef23}) and $m$ is an integer.  This leads to the conclusion that
there is equality in the inequality on the right of (\ref{ef21}) when $x$ has the form (\ref{ef23}).

Now suppose that $\wl(x)$ is a real trigonometric polynomial of degree at most $N$ such that 
\begin{equation*}\label{ef32}
\wl(x) \le p(\lambda, x)
\end{equation*}
at each point $x$ in $\R/\Z$.  Using the case of equality in the inequality on the left of (\ref{ef21}),
we get
\begin{align}\label{ef33}
\begin{split}
\int_{\R/\Z} \wl(x)\ \dx &= \tfrac{1}{N+1} \sum_{n=1}^{N+1} \wl\bigl(\tfrac{n-\h}{N+1}\bigr)
	\le \tfrac{1}{N+1} \sum_{n=1}^{N+1} p\bigl(\lambda, \tfrac{n-\h}{N+1}\bigr) \\
	&= \tfrac{1}{N+1} \sum_{n=1}^{N+1} l\bigl(\lambda, N; \tfrac{n-\h}{N+1}\bigr)
	= \int_{\R/\Z} l(\lambda, N; x)\ \dx.
\end{split}
\end{align}
This proves the inequality (\ref{ef24}), and the same sort of argument can be used to prove (\ref{ef25}).

If there is equality in (\ref{ef24}), then it is clear that there is equality in (\ref{ef33}).  This implies
that
\begin{equation*}\label{ef34}
\wl\bigl(\tfrac{n-\h}{N+1}\bigr) = l\bigl(\lambda, N; \tfrac{n-\h}{N+1}\bigr)
\end{equation*}  
for $n = 1, 2, \dots , N+1$.  As both $\wl(x)$ and $l(\lambda, N; x)$ are less than or
equal to $p(\lambda, x)$ at each point $x$ of $\R/\Z$, we also conclude that
\begin{equation*}\label{ef35}
\wl^{\prime}\bigl(\tfrac{n-\h}{N+1}\bigr) = l^{\prime}\bigl(\lambda, N; \tfrac{n-\h}{N+1}\bigr)
\end{equation*} 
for each $n = 1, 2, \dots , N+1$.  This shows that the real trigonometric polynomial
\begin{equation}\label{ef36}
l(\lambda, N; x) - \wl(x)
\end{equation}
has degree at most $N$, and it each point $x = \frac{n-\h}{N+1}$, where $n = 1, 2, \dots , N+1$, the polynomial
and its derivative both vanish.  It is well known (see \cite[Vol. II, page 23]{Z}) that such a trigonometric polynomial
must be identically zero.  In a similar manner, if equality occurs in the inequality (\ref{ef25}), then we find
that
\begin{equation*}\label{ef37}
\wm(x) - m(\lambda, N; x)
\end{equation*}
is identically zero.  This completes the proof of assertion (v) in the statement of the Theorem.
\end{proof}

It follows from (\ref{ef11}) and (\ref{ef13}) that
\begin{equation}\label{ef42}
-\big\{\tfrac{2}{\lambda} - \csch\bigl(\tfrac{\lambda}{2}\bigr)\big\} = p(\lambda, \hh) \le p(\lambda, x)
	\le p(\lambda, 0) = \coth\bigl(\tfrac{\lambda}{2}\bigr) - \tfrac{2}{\lambda}.
\end{equation}
Then (\ref{ef42}) provides the useful inequality
\begin{align}\label{ef43}
\begin{split}
\bigl|p(\lambda, x)\bigr| &\le \bigl|p(\lambda, x) - p(\lambda, \hh)\bigr| + \bigl|p(\lambda, \hh)\bigr| \\
	&= p(\lambda, x) - p(\lambda, \hh) - p(\lambda, \hh) \\
	&= p(\lambda, x) + 2\big\{\tfrac{2}{\lambda} - \csch\bigl(\tfrac{\lambda}{2}\bigr)\big\}
\end{split}
\end{align}
at each point $(\lambda, x)$ in $(0, \infty)\times \R/\Z$.  From (\ref{ef15}) and (\ref{ef43}) we conclude that
\begin{equation}\label{ef44}
\int_{\R/\Z} \bigl|p(\lambda, x)\bigr|\ \dx \le 2\big\{\tfrac{2}{\lambda} - \csch\bigl(\tfrac{\lambda}{2}\bigr)\big\}. 
\end{equation}

Let $\mu$ be a measure on the Borel subsets of $(0,\infty)$ that satisfies (\ref{intro31}).  For $0 < x < 1$ it 
follows from (\ref{ef11}) and (\ref{ef13}) that $\lambda\mapsto p(\lambda, x)$ is integrable on $(0,\infty)$ 
with respect to $\mu$.  We define $q_{\mu}:\R/\Z\rightarrow \R\cup\{\infty\}$ by
\begin{equation}\label{ef50}
q_{\mu}(x) = \int_0^{\infty} p(\lambda, x)\ \dmu,
\end{equation}
where
\begin{equation}\label{ef51}
q_{\mu}(0) = \int_0^{\infty}\big\{\coth\bigl(\tfrac{\lambda}{2}\bigr) - \tfrac{2}{\lambda}\big\}\ \dmu
\end{equation}
may take the value $\infty$.  Using (\ref{ef44}) and Fubini's theorem we have
\begin{align*}\label{ef52}
\begin{split}
\int_{\R/\Z} \bigl|q_{\mu}(x)\bigr|\ \dx &\le \int_0^{\infty} \int_{\R/\Z} \bigl|p(\lambda, x)\bigr|\ \dx\ \dmu \\ 
	&\le 2 \int_0^{\infty} \big\{\tfrac{2}{\lambda} - \csch\bigl(\tfrac{\lambda}{2}\bigr)\big\}\ \dmu < \infty,
\end{split}
\end{align*}
so that $q_{\mu}$ is integrable on $\R/\Z$.  Using (\ref{ef15}) and (\ref{ef16}), we find that the Fourier 
coefficients of $q_{\mu}$ are given by
\begin{equation}\label{ef53}
\tq_{\mu}(0) = \int_{\R/\Z} q_{\mu}(x)\ \dx = \int_0^{\infty} \int_{\R/\Z} p(\lambda, x)\ \dx\ \dmu = 0,
\end{equation}
and
\begin{align}\label{ef54}
\begin{split}
\tq_{\mu}(n) &= \int_{R/\Z} q_{\mu}(x) e(-nx)\ \dx \\
	     &= \int_0^{\infty} \int_{\R/\Z} p(\lambda, x)e(-nx)\ \dx\ \dmu \\
 	     &= \int_0^{\infty} \frac{2\lambda}{\lambda^2 + 4\pi^2n^2}\ \dmu,
\end{split}
\end{align}
for integers $n\not= 0$.  As $n\mapsto \tq_{\mu}(n)$ is an even function of $n$, and $\tq_{\mu}(n) \ge \tq_{\mu}(n+1)$ 
for $1 \le n$, the partial sums
\begin{equation}\label{ef55}
q_{\mu}(x) = \lim_{N\rightarrow \infty} \sum_{\substack{n=-N\\ n\not= 0}}^N \tq_{\mu}(n)e(nx)
\end{equation}
converge uniformly on compact subsets of $\R/\Z\setminus\{0\}$, (see \cite[Chapter I, Theorem 2.6]{Z}).
In particular, $q_{\mu}(x)$ is continuous on $\R/\Z\setminus\{0\}$. 

Next we define the function
\begin{equation*}\label{Sec6.1}
j:(0,\infty)\times \R/\Z \rightarrow \R 
\end{equation*}
by $j(\lambda,x) = 0$ if $x$ is in $\Z$, and 
\begin{equation}\label{Sec6.2}
j(\lambda,x) = \dfrac{\partial p}{\partial x}(\lambda,x)
	= \frac{\lambda \sinh\bigl(\lambda(x - [x] - \hh)\bigr)}{\sinh\bigl(\tfrac{\lambda}{2}\bigr)},
\end{equation}
if $x$ is not in $\Z$. We note that $j(\lambda,x)$ satisfies the elementary inequality
\begin{equation}\label{Sec6.2.1}
\bigl|j(\lambda,x)\bigr| \le \lambda e^{-\lambda \|x\|}.
\end{equation}
\begin{lemma}\label{lem6.2}
 If $\mu$ satisfies {\rm (\ref{intro31})} then $q_{\mu}(x)$ has a continuous derivative at each point of $\R/\Z\setminus\{0\}$ given by
\begin{equation}\label{Sec6.3}
q_{\mu}^{\prime}(x) = \int_0^{\infty} j(\lambda, x)\ \dmu.
\end{equation}
\end{lemma}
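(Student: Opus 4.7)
The plan is to differentiate under the integral sign, so the whole argument reduces to producing a $\mu$-integrable dominant for $j(\lambda,x)$ that is uniform on a neighborhood of any fixed $x_0 \in \R/\Z \setminus \{0\}$.

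Fix $x_0 \in \R/\Z \setminus \{0\}$, choose $\eta$ with $0 < \eta < \|x_0\|$, and work on the closed arc $K = \{x \in \R/\Z : \|x - x_0\| \le \eta/2\}$. Each $x \in K$ satisfies $\|x\| \ge \eta$, so by (\ref{Sec6.2.1}) we have the uniform bound
\begin{equation*}
|j(\lambda,x)| \le \lambda e^{-\lambda \eta}\quad\text{for all } x\in K,\ \lambda>0.
\end{equation*}
The key observation is that this dominant is $\mu$-integrable under the hypothesis (\ref{intro31}): the function $\lambda \mapsto (\lambda^2+1)e^{-\lambda\eta}$ is bounded on $(0,\infty)$ by some $C(\eta)$, hence
\begin{equation*}
\lambda e^{-\lambda\eta} \le C(\eta)\,\frac{\lambda}{\lambda^2+1},
\end{equation*}
and the right-hand side is $\mu$-integrable by (\ref{intro31}).

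Next I would verify differentiation under the integral sign pointwise on $K$. For $x \in K$ and $h$ small enough that $x+h$ still lies in $K$, the mean-value theorem applied to the smooth function $t \mapsto p(\lambda,t)$ on the interval $[x,x+h]$ (which, for $\|x_0\|>\eta$, contains no integer) yields
\begin{equation*}
\left|\frac{p(\lambda,x+h) - p(\lambda,x)}{h}\right| = |j(\lambda,x+\theta h)| \le \lambda e^{-\lambda\eta}
\end{equation*}
for some $\theta \in (0,1)$. As $h \to 0$ the difference quotient converges pointwise in $\lambda$ to $j(\lambda,x)$, so the dominated convergence theorem gives
\begin{equation*}
q_\mu'(x) = \lim_{h\to 0}\int_0^\infty \frac{p(\lambda,x+h) - p(\lambda,x)}{h}\ \dmu = \int_0^\infty j(\lambda,x)\ \dmu,
\end{equation*}
which is (\ref{Sec6.3}).

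Finally, for continuity of $q_\mu'$ on $\R/\Z\setminus\{0\}$, the formula (\ref{Sec6.2}) shows $x \mapsto j(\lambda,x)$ is continuous on $K$ for every fixed $\lambda>0$. Another application of dominated convergence, with the same dominant $\lambda e^{-\lambda\eta}$, shows that $q_\mu'$ is continuous at $x_0$. Since $x_0 \in \R/\Z\setminus\{0\}$ was arbitrary, this completes the proof. The only nontrivial point is the domination step above; everything else is standard differentiation under the integral.
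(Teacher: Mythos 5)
Your argument is correct in substance and establishes the lemma, but it proceeds along a different route than the paper.  The paper avoids differentiating under the integral sign directly: it first shows (using (\ref{Sec6.2.1}) and (\ref{intro31})) that $(\lambda,y)\mapsto j(\lambda,y)$ is jointly integrable on $(0,\infty)\times[\epsilon,1-\epsilon]$, then applies Fubini to the double integral of $j$ to obtain
$q_{\mu}(x)-q_{\mu}(\tfrac12)=\int_{1/2}^{x}\bigl(\int_0^{\infty}j(\lambda,y)\ \dmu\bigr)\dy$,
and finally invokes the fundamental theorem of calculus, using continuity of the inner integral (itself a dominated-convergence fact) to identify $q_{\mu}^{\prime}$.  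You instead take the classical ``differentiate under the integral sign'' tack: fix $x_0$, bound the difference quotients $\bigl|\bigl(p(\lambda,x+h)-p(\lambda,x)\bigr)/h\bigr|$ uniformly near $x_0$ via the mean-value theorem and (\ref{Sec6.2.1}), produce a $\mu$-integrable dominant, and conclude by dominated convergence.  The two methods need exactly the same ingredients (the pointwise bound (\ref{Sec6.2.1}) and the integrability coming from (\ref{intro31})); the paper's Fubini route has the mild advantage of not requiring a local mean-value estimate, while yours is perhaps the more recognizable textbook argument and localizes cleanly.

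One bookkeeping slip worth fixing: with $0<\eta<\|x_0\|$ and $K=\{x:\|x-x_0\|\le\eta/2\}$, the triangle inequality gives only $\|x\|\ge\|x_0\|-\eta/2>\eta/2$, not $\|x\|\ge\eta$.  Either shrink the arc (take $\|x-x_0\|\le\eta/2$ with $\eta<\tfrac23\|x_0\|$, say) or simply use $\lambda e^{-\lambda\eta/2}$ as the dominant; the integrability argument is unchanged.
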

\begin{proof} It follows from (\ref{intro31}) and (\ref{Sec6.2.1}) that $\lambda \mapsto j(\lambda,x)$ is integrable with 
respect to $\mu$ at each noninteger point $x$. Assume that $0< \epsilon < \frac{1}{2}$. Then we have
\begin{align}\label{Sec6.4}
\begin{split}
\int_0^{\infty} \int_{\epsilon}^{1-\epsilon} \bigl|j(\lambda,x) \bigr|\ \dy\ \dmu 
	&\le \int_0^{\infty} \int_{\epsilon}^{1-\epsilon} \lambda e^{-\lambda \|y\|}\ \dy\ \dmu \\ 
	& =  2 \int_0^{\infty} \{e^{-\lambda \epsilon} - e^{\lambda/2}\}\ \dmu < \infty.
\end{split}
\end{align}
Assume that $\epsilon \le \|x\|$. Using (\ref{Sec6.2}), (\ref{Sec6.4}) and Fubini's theorem, we obtain the identity
\begin{align}\label{Sec6.5}
\begin{split}
q_{\mu}(x) - q_{\mu}(\tfrac{1}{2})&= \int_0^{\infty} \int_{\tfrac{1}{2}}^{x} j(\lambda,x)\ \dy \ \dmu \\
& =  \int_{\tfrac{1}{2}}^{x} \int_0^{\infty} j(\lambda,x)  \ \dmu\ \dy.
\end{split}
\end{align}
Clearly (\ref{Sec6.5}) implies that $q_{\mu}(x)$ is differentiable on $\R/\Z\setminus\{0\}$ and its derivative is 
given by (\ref{Sec6.3}). Then it follows from (\ref{Sec6.2.1}) and the dominated convergence theorem that $q_{\mu}^{\prime}(x)$ 
is continuous at each point of $\R/\Z\setminus\{0\}$.
\end{proof}

Now assume that $\mu$ satisfies the more restrictive condition (\ref{intro47}).  From (\ref{ef42}) we obtain 
the alternative bound
\begin{equation}\label{ef56}
|p(\lambda, x)| \le \max\big\{\tfrac{2}{\lambda} - \csch\bigl(\tfrac{\lambda}{2}\bigr),
	\coth\bigl(\tfrac{\lambda}{2}\bigr) - \tfrac{2}{\lambda}\big\} 
	= \coth\bigl(\tfrac{\lambda}{2}\bigr) - \tfrac{2}{\lambda}
\end{equation}
at all points $(\lambda, x)$ in $(0,\infty)\times\R/\Z$.  As the function on the right of (\ref{ef56}) is
integrable with respect to $\mu$, it follows from the dominated convergence theorem that
\begin{equation*}\label{ef57}
q_{\mu}(x) = \int_0^{\infty} p(\lambda, x)\ \dmu
\end{equation*}
is continuous on $\R/\Z$.  Also, the Fourier coefficients $\tq_{\mu}(n)$ are nonnegative and satisfy
\begin{align*}\label{ef58}
\sum_{n=-\infty}^{\infty} \tq_{\mu}(n) 
	&= \sum_{\substack{n=-\infty\\n\not= 0}}^{\infty} \int_0^{\infty} \frac{2\lambda}{\lambda^2 + 4\pi^2n^2}\ \dmu \\
	&= \int_0^{\infty} \big\{\coth\bigl(\tfrac{\lambda}{2}\bigr) - \tfrac{2}{\lambda}\big\}\ \dmu < \infty.
\end{align*}
Therefore the partial sums 
\begin{equation}\label{ef59}
q_{\mu}(x) = \lim_{N\rightarrow \infty} \sum_{\substack{n=-N\\ n\not= 0}}^N \tq_{\mu}(n)e(nx)
\end{equation}
converge absolutely and uniformly on $\R/\Z$.

For each nonnegative integer $N$, we define a trigonometric polynomial 
$g_{\mu}(N; x)$, of degree at most $N$, by
\begin{equation}\label{ef60}
g_{\mu}(N; x) = \sum_{n = -N}^N \ug_{\mu}(N; n)e(nx),
\end{equation}
where the Fourier coefficients are given by
\begin{equation}\label{ef61}
\ug_{\mu}(N; 0) = - \int_0^{\infty} \big\{\tfrac{2}{\lambda} 
			- \tfrac{1}{N+1}\csch\bigl(\tfrac{\lambda}{2N+2}\bigr)\big\}\ \dmu,
\end{equation}
and
\begin{equation}\label{ef62}
\ug_{\mu}(N; n) = \tfrac{1}{N+1} \int_0^{\infty} \tL\bigl(\tfrac{\lambda}{N+1}, \tfrac{n}{N+1}\bigr)\ \dmu,
\end{equation}
for $n\not= 0$.  

\begin{theorem}\label{thm7.2}  Assume that $\mu$ satisfies {\rm (\ref{intro31})}.  Then the inequality
\begin{equation}\label{ef63}
g_{\mu}(N; x) \le q_{\mu}(x)
\end{equation}
holds for all $x$ in $\R/\Z$.  If $\wg(x)$ is a real trigonometric polynomial of degree at most $N$ that satisfies
the inequality
\begin{equation}\label{ef64}
\wg(x) \le q_{\mu}(x)
\end{equation}
for all $x$ in $\R/\Z$, then
\begin{equation}\label{ef65}
\int_{\R/\Z} \wg(x)\ \dx \le \int_{\R/\Z} g_{\mu}(N; x)\ \dx.
\end{equation}
Moreover, there is equality in the inequality {\rm (\ref{ef65})} if and only if $\wg(x) = g_{\mu}(N; x)$.
\end{theorem}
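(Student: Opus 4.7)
The plan is to transport the pointwise extremal inequality $l(\lambda,N;x) \le p(\lambda,x)$ from Theorem \ref{thm6.1} to the measure setting by integrating against $\mu$, in close analogy with the passage from $L(\lambda, x)$ to $G_{\mu}(x)$ in the proof of Theorem \ref{thm1.1}.

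First I would verify that for every $x\in \R/\Z$ the nonnegative function $\lambda \mapsto p(\lambda, x) - l(\lambda, N; x)$ is $\mu$-integrable on $(0,\infty)$ and that
\begin{equation*}
q_{\mu}(x) - g_{\mu}(N; x) = \int_0^{\infty} \big\{p(\lambda, x) - l(\lambda, N; x)\big\}\ \dmu.
\end{equation*}
Integrability is obtained via Tonelli's theorem applied to the nonnegative integrand: integrating first over $\R/\Z$ gives
\begin{equation*}
\int_{\R/\Z}\bigl\{p(\lambda, x) - l(\lambda, N; x)\bigr\}\ \dx = \tfrac{2}{\lambda} - \tfrac{1}{N+1}\csch\bigl(\tfrac{\lambda}{2N+2}\bigr),
\end{equation*}
which is $\mu$-integrable under (\ref{intro31}), being comparable to $\tfrac{\lambda}{\lambda^2+1}$ near $0$ and $\infty$. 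Matching Fourier coefficients via the definitions (\ref{ef6}) and (\ref{ef60})--(\ref{ef62}), together with Fubini's theorem, then yields the displayed identity, and the minorant inequality (\ref{ef63}) follows at once from $l \le p$.

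For the extremal statement I would use the midpoint quadrature formula
\begin{equation*}
\int_{\R/\Z} P(x)\ \dx = \frac{1}{N+1}\sum_{n=1}^{N+1} P\bigl(\tfrac{n-\h}{N+1}\bigr),
\end{equation*}
valid for every trigonometric polynomial $P$ of degree at most $N$ since $\sum_{n=1}^{N+1}e\bigl(k\tfrac{n-\h}{N+1}\bigr) = 0$ for $1 \le |k| \le N$. Integrating the equality assertion of Theorem \ref{thm6.1}(ii) against $\mu$ yields $g_{\mu}(N; x_n) = q_{\mu}(x_n)$ at the nodes $x_n = \tfrac{n-\h}{N+1}$, and combining this with the hypothesis $\wg(x_n) \le q_{\mu}(x_n)$ gives (\ref{ef65}) in one line.

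For equality, suppose equality holds in (\ref{ef65}). Then $\wg(x_n) = g_{\mu}(N; x_n) = q_{\mu}(x_n)$ for $n=1,\dots,N+1$. Each node $x_n$ lies in $\R/\Z \setminus \{0\}$, so Lemma \ref{lem6.2} gives that $q_{\mu}$ is $C^1$ near $x_n$; since both $\wg$ and $g_{\mu}(N; \cdot)$ lie below $q_{\mu}$ and meet it at $x_n$, elementary calculus forces $\wg'(x_n) = q_{\mu}'(x_n) = g_{\mu}'(N; x_n)$. Thus $P(x) := g_{\mu}(N; x) - \wg(x)$ is a real trigonometric polynomial of degree at most $N$ with double zeros at $N+1$ distinct points. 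Writing $P(x) = e(-Nx) Q(e(x))$ for an algebraic polynomial $Q$ of degree at most $2N$, these correspond to $2(N+1) > \deg Q$ zeros of $Q$ counted with multiplicity, forcing $Q \equiv 0$ and hence $\wg \equiv g_{\mu}(N; \cdot)$. The only delicate points are the Fubini--Tonelli applications and the identification of Fourier coefficients of $g_{\mu}(N; \cdot)$ with those obtained by integrating $l(\lambda, N; x)$ termwise against $\mu$; the remaining algebraic scheme is a direct periodic analogue of the argument already used for Theorem \ref{thm1.1}.
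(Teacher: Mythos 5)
Your proposal is correct and follows essentially the same route as the paper: you integrate the extremal inequality $l(\lambda,N;x)\le p(\lambda,x)$ against $\mu$ to get $g_\mu(N;\cdot)\le q_\mu$, use the midpoint quadrature identity and the nodal equalities from Theorem \ref{thm6.1}(ii) for the extremal bound, and invoke Lemma \ref{lem6.2} together with the double-zero argument for uniqueness. The only cosmetic difference is that you spell out the Fubini--Tonelli bookkeeping and give an elementary proof of the vanishing of a degree-$N$ trigonometric polynomial with $N+1$ double zeros, where the paper simply cites Zygmund.
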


\begin{proof}  We will use the elementary identity 
\begin{equation}\label{Sec6.6}
g_{\mu}(N;x) = \int_0^{\infty} l(\lambda,N;x)\ \dmu.
\end{equation}
The inequality on the left hand side of (\ref{ef21}), together with (\ref{ef50}) and (\ref{Sec6.6}), imply (\ref{ef63}). 
Moreover, from (\ref{ef22}) we have
\begin{equation*}
g_{\mu}(N; x) = q_{\mu}(x)
\end{equation*}
for 
\begin{equation*}
x = \tfrac{n - \h}{N+1}\quad\text{and}\quad n = 1, 2, \dots , N+1.
\end{equation*}
The final part of the proof of Theorem \ref{thm7.2} follows as in Theorem \ref{thm6.1}, using the differentiability 
of $q_{\mu}(x)$ on $\R/\Z\setminus\{0\}$ proved in Lemma \ref{lem6.2}.
\end{proof}

If the measure $\mu$ satisfies the more restrictive condition (\ref{intro47}), then we have shown that 
$x\mapsto q_{\mu}(x)$ is continuous on $\R/\Z$, and in particular $q_{\mu}(0)$ is finite. In this case we can exploit 
Theorem \ref{thm1.4} and Theorem \ref{thm6.1} to obtain an extremal trigonometric polynomial of degree at most $N$ that 
majorizes $q_{\mu}(x)$. 

For each nonnegative integer $N$, we define a trigonometric polynomial $h_{\mu}(N; x)$, of degree at most
$N$, by
\begin{equation}\label{ef78}
h_{\mu}(N; x) = \sum_{n = -N}^N \uh_{\mu}(N; n)e(nx),
\end{equation}
where the Fourier coefficients are given by
\begin{equation}\label{ef79}
\uh_{\mu}(N; 0) = \int_0^{\infty} \big\{\tfrac{1}{N+1}\coth\bigl(\tfrac{\lambda}{2N+2}\bigr) - \tfrac{2}{\lambda}\big\}\ \dmu,
\end{equation}
and
\begin{equation}\label{ef80}
\uh_{\mu}(N; n) = \tfrac{1}{N+1} \int_0^{\infty} \tM\bigl(\tfrac{\lambda}{N+1}, \tfrac{n}{N+1}\bigr)\ \dmu,
\end{equation}
for $n\not= 0$.  The proof of the following result is similar to the proof of Theorem \ref{thm7.2}

\begin{theorem}\label{thm7.3}  Assume that $\mu$ satisfies {\rm (\ref{intro47})}.  Then the inequality
\begin{equation}\label{ef81}
q_{\mu}(x) \le h_{\mu}(N; x)
\end{equation}
holds for all $x$ in $\R/\Z$.  If $\wh(x)$ is a real trigonometric polynomial of degree at most $N$ that satisfies
the inequality
\begin{equation}\label{ef82}
q_{\mu}(x) \le \wh(x)
\end{equation}
for all $x$ in $\R/\Z$, then
\begin{equation}\label{ef83}
\int_{\R/\Z} h_{\mu}(N; x)\ \dx \le \int_{\R/\Z} \wh(x)\ \dx.
\end{equation}
Moreover, there is equality in the inequality {\rm (\ref{ef83})} if and only if $\wh(x) = h_{\mu}(N; x)$.
\end{theorem}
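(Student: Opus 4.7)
The plan is to mirror the proof of Theorem \ref{thm7.2}, replacing the minorant side of Theorem \ref{thm6.1} with the majorant side. The central preparatory step is to establish the integral representation
\begin{equation*}
h_{\mu}(N;x) = \int_0^{\infty} m(\lambda,N;x)\ \dmu,
\end{equation*}
which follows by comparing the Fourier expansion (\ref{ef7}) of $m(\lambda,N;x)$ with the definitions (\ref{ef78})--(\ref{ef80}) of $h_{\mu}(N;x)$ and swapping the finite sum with the $\mu$-integral via Fubini. The $\mu$-integrability of each Fourier coefficient $\lambda \mapsto \tM(\lambda/(N+1),n/(N+1))$ follows from the explicit formula (\ref{finite6}): near $\lambda=0$ this coefficient is $O(\lambda)$ (the denominator being bounded below by $\sin^2(\pi n/(N+1)) > 0$ for $1 \le |n| \le N$), and for large $\lambda$ it tends to $1-|n/(N+1)|$, yielding a bound of the form $C\lambda/(\lambda+1)$ that is integrable under (\ref{intro47}); the $n=0$ mean (\ref{ef9}) admits the same type of bound. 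Integrating the right-hand inequality of (\ref{ef21}) against $d\mu(\lambda)$ then gives (\ref{ef81}) at once.

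For the extremal clause, let $\wh$ be any real trigonometric polynomial of degree at most $N$ satisfying $q_{\mu} \le \wh$ on $\R/\Z$. Since the mean of a trigonometric polynomial of degree at most $N$ equals its average over the $N+1$ equally spaced nodes $n/(N+1)$,
\begin{equation*}
\int_{\R/\Z} \wh(x)\ \dx
= \frac{1}{N+1}\sum_{n=1}^{N+1} \wh\bigl(\tfrac{n}{N+1}\bigr)
\ge \frac{1}{N+1}\sum_{n=1}^{N+1} q_{\mu}\bigl(\tfrac{n}{N+1}\bigr).
\end{equation*}
Integrating the equality case from Theorem \ref{thm6.1}(ii) against $d\mu(\lambda)$ gives $q_{\mu}(n/(N+1)) = h_{\mu}(N;n/(N+1))$, and then reversing the quadrature for $h_{\mu}(N;\cdot)$ recovers (\ref{ef83}).

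For the uniqueness clause, suppose equality holds in (\ref{ef83}). Then $\wh$ and $h_{\mu}(N;\cdot)$ agree at each of the $N+1$ nodes $n/(N+1)$, $1 \le n \le N+1$. At the $N$ nodes with $1 \le n \le N$ the argument lies in $\R/\Z \setminus \{0\}$, where $q_{\mu}$ is $C^1$ by Lemma \ref{lem6.2}; since both $\wh$ and $h_{\mu}(N;\cdot)$ touch the differentiable function $q_{\mu}$ from above, their derivatives also coincide at each such node. Hence the difference $r(x) = \wh(x) - h_{\mu}(N;x)$ is a real trigonometric polynomial of degree at most $N$ with at least $2N+1$ zeros counted with multiplicity on $\R/\Z$ (a double zero at each of the $N$ smooth nodes and one further zero at $x=0$). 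Since a nonzero real trigonometric polynomial of degree at most $N$ has at most $2N$ zeros counted with multiplicity, $r$ must vanish identically. The main delicacy I anticipate is the potential non-differentiability of $q_{\mu}$ at $x=0$, which would otherwise obstruct deducing $\wh^{\prime}(0) = h_{\mu}^{\prime}(N;0)$; it is neatly sidestepped by the zero-count argument above, so the periodic version requires no analogue of the Lemma~4 device invoked for $H_{\mu}^{\prime}(0)$ in Theorem \ref{thm1.2}(vii).
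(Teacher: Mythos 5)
Your proposal is correct and follows the route the paper intends: the paper's proof of Theorem~\ref{thm7.3} is simply a pointer to Theorem~\ref{thm7.2}, which in turn rests on the identity $g_{\mu}(N;x)=\int_0^{\infty}l(\lambda,N;x)\,\dmu$, the pointwise inequality from Theorem~\ref{thm6.1}, the quadrature at the interpolation nodes, and the double-zero count for uniqueness; your version reproduces this with the majorant side substituted, and the extra Fubini/integrability check for the Fourier coefficients under~(\ref{intro47}) is a correct and welcome detail. The one place where your write-up genuinely improves on the paper's terse ``similar to Theorem~\ref{thm7.2}'' is the uniqueness clause: because the node $x=(N+1)/(N+1)\equiv 0$ lies at the non-smooth point of $q_{\mu}$ (and indeed of $p(\lambda,\cdot)$), derivative matching there is unavailable, and your observation that $N$ double zeros plus one simple zero already give $2N+1>2N$ zeros of a degree-$\le N$ trigonometric polynomial is exactly the patch needed to make the argument airtight.
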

We note that Theorem \ref{thm1.5}, described in the introduction of this paper, is a 
special case of Theorem \ref{thm7.2} when applied to the Haar measure $\mu$ defined in (\ref{intro80}). For 
this it is sufficient to compare the Fourier coefficients
\begin{equation}
\tq_{\mu}(n) = \int_0^{\infty} \frac{2\lambda}{\lambda^2 + 4\pi^2n^2}\ \lambda^{-1} \dl = \dfrac{1}{2|n|}, \ \ n\neq 0,
\end{equation}
given by (\ref{ef54}), with the well known Fourier expansion
\begin{equation}
-\log \bigl|1 - e(x) \bigr| = - \log \bigl| 2\sin \pi x \bigr| = \sum_{n\neq 0} \dfrac{1}{2|n|}e(nx).
\end{equation}
We define therefore $u_{N}(x) = -g_{\mu}(N;x)$. Equality (\ref{intro92}) follows from (\ref{ef61}) and
\begin{equation}
\widehat{u}_N(0) =  \int_0^{\infty} \big\{\tfrac{2}{\lambda} 
			- \tfrac{1}{N+1}\csch\bigl(\tfrac{\lambda}{2N+2}\bigr)\big\}\ \lambda^{-1}\ \dl = \dfrac{\log 2}{N+1}\,.
\end{equation}
Finally, the bound (\ref{intro93}) follows from (\ref{ef62}) and Corollary \ref{cor3.1}.

\section{Bounds for Hermitian forms}

Let $\mu$ be a measure on the Borel subsets of $(0,\infty)$ that satisfies (\ref{intro31}).  Define the function 
$r_{\mu}:\R\rightarrow [0,\infty]$ by
\begin{equation}\label{hf1}
r_{\mu}(t) = \int_0^{\infty} \frac{2\lambda}{\lambda^2 + 4\pi^2 t^2}\ \dmu.
\end{equation}
It follows using (\ref{intro31}) that $r_{\mu}(t)$ is even, continuous, finite for 
all $t\not= 0$, and nonincreasing for $0 < t$.  

Let $\xi_0, \xi_1, \xi_2, \dots , \xi_N$ be distinct real numbers such that $0 < \delta \le |\xi_m - \xi_n|$ whenever 
$m\not= n$.  We consider the Hermitian form defined for vectors $\ba$ in $\C^{N+1}$ by
\begin{equation}\label{hf2}
\ba\mapsto \sum_{m=0}^N\sum_{\substack{n=0\\n\not= m}}^N a_m\overline{a}_n r_{\mu}(\xi_m - \xi_n),
\end{equation}
where $\overline{a}_n$ is the complex conjugate of $a_n$.  

\begin{theorem}\label{thm7.1}  If $\mu$ satisfies {\rm (\ref{intro31})} then
\begin{equation}\label{bd1}
- A(\delta, \mu) \sum_{n=0}^N \bigl|a_n\bigr|^2 
	\le \sum_{m=0}^N\sum_{\substack{n=0\\n\not= m}}^N a_m\overline{a}_n r_{\mu}(\xi_m - \xi_n),
\end{equation}
for all complex numbers $a_n$, where
\begin{equation}\label{bd2}
A(\delta, \mu) 
  = \int_0^{\infty} \big\{\tfrac{2}{\lambda} 
			- \tfrac{1}{\delta}\csch\left(\tfrac{\lambda}{2\delta}\right)\big\}\ \dmu.
\end{equation}
The inequality {\rm (\ref{bd1})} is sharp in the sense that the positive constant $A(\delta, \mu)$ defined by {\rm (\ref{bd2})}
cannot be replaced by a smaller number.

If $\mu$ satisfies {\rm (\ref{intro47})} then
\begin{equation}\label{bd3}
\sum_{m=0}^N\sum_{\substack{n=0\\n\not= m}}^N a_m\overline{a}_n r_{\mu}(\xi_m - \xi_n)
	\le B(\delta, \mu) \sum_{n=0}^N \bigl|a_n\bigr|^2
\end{equation}
for all complex numbers $a_n$, where
\begin{equation}\label{bd4}
B(\delta, \mu) 
  = \int_0^{\infty} \big\{\tfrac{1}{\delta}\coth\left(\tfrac{\lambda}{2\delta}\right) - \tfrac{2}{\lambda}\big\}\ \dmu.
\end{equation}
The inequality {\rm (\ref{bd3})} is sharp in the sense that the positive constant $B(\delta, \mu)$ defined by {\rm (\ref{bd4})}
cannot be replaced by a smaller number.
\end{theorem}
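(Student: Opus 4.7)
The plan is to exploit the extremal majorants and minorants from Theorem \ref{thm1.3} and Theorem \ref{thm1.4} via Fourier duality. Set
\[
\Phi_\delta(x) := f_\mu(x) - f_\mu\bigl(\delta^{-1}\bigr) - G_\nu(\delta x),
\]
which by Theorem \ref{thm1.3}(iii)--(iv) is nonnegative and integrable on $\R$ with $\int_{\R} \Phi_\delta\,\dx = A(\delta,\mu)$. Part (v) of the same theorem gives, for $t\ne 0$,
\[
\widehat{\Phi}_\delta(t) = r_\mu(t) - K_\delta(t), \qquad K_\delta(t):=\delta^{-1}\int_0^{\infty} \tL\bigl(\delta^{-1}\lambda, \delta^{-1}t\bigr)\ \dmu,
\]
while $\widehat{\Phi}_\delta(0)=A(\delta,\mu)$. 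By Lemma \ref{lem3.3} the Fourier transform $\tL(\lambda,\cdot)$ is supported in $[-1,1]$ and vanishes at the endpoints $\pm 1$ (both the $(1-|t|)$ and $|\sin \pi t|$ factors in the numerator of (\ref{finite5}) vanish there). Hence $K_\delta$ is continuous, supported in $[-\delta,\delta]$, and vanishes at $\pm\delta$. Since $|\xi_m-\xi_n|\ge \delta$ for $m\ne n$, we obtain $K_\delta(\xi_m-\xi_n)=0$, and so $r_\mu(\xi_m-\xi_n)=\widehat{\Phi}_\delta(\xi_m-\xi_n)$ whenever $m\ne n$.

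Writing $S(x):=\sum_{n=0}^{N} a_n e(-\xi_n x)$, I would then compute
\begin{align*}
\sum_{m=0}^N\sum_{\substack{n=0\\n\not=m}}^N a_m\overline{a}_n r_\mu(\xi_m-\xi_n) + A(\delta,\mu)\sum_{n=0}^N |a_n|^2 &= \sum_{m,n=0}^{N} a_m\overline{a}_n \widehat{\Phi}_\delta(\xi_m-\xi_n) \\
&= \int_{-\infty}^{\infty} \Phi_\delta(x)\bigl|S(x)\bigr|^2\,\dx\;\ge\;0,
\end{align*}
which is precisely (\ref{bd1}). The upper bound (\ref{bd3}) follows from the same duality, starting from $\Psi_\delta(x) := H_\nu(\delta x) + f_\mu(\delta^{-1}) - f_\mu(x) \ge 0$ (Theorem \ref{thm1.4}(iii)) and using the analogous vanishing of $\tM(\lambda,\cdot)$ at $\pm 1$, visible from (\ref{finite6}). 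Its Fourier transform, by Theorem \ref{thm1.4}(v), equals $-r_\mu(t)$ for $|t|\ge\delta$, with $\widehat{\Psi}_\delta(0)=B(\delta,\mu)$.

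The main obstacle is proving sharpness of the constants $A(\delta,\mu)$ and $B(\delta,\mu)$. The identity above shows that improving $A(\delta,\mu)$ in (\ref{bd1}) is equivalent to making $\int \Phi_\delta(x)|S(x)|^2\,\dx$ strictly smaller than $\varepsilon\sum_n|a_n|^2$ for arbitrarily small $\varepsilon>0$. Since $\Phi_\delta$ vanishes precisely at the interpolation nodes $x=(k-\hh)/\delta$, $k\in\Z$, of $G_\nu(\delta\,\cdot)$, my plan is to take $\xi_j = j\delta$ for $j=0,1,\dots,N$, together with $a_j=(-1)^j(N+1)^{-1/2}$. Then $\sum_j|a_j|^2=1$ and $|S(x)|^2$ is a Fej\'er-type kernel of total mass $1$ whose peaks are concentrated at $\delta x\in \Z+\hh$, i.e., precisely on the zero set of $\Phi_\delta$; a standard weak-convergence argument as $N\to\infty$ then yields $\int \Phi_\delta|S|^2\,\dx\to 0$, so $A(\delta,\mu)$ cannot be lowered. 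The sharpness of $B(\delta,\mu)$ follows in the same way, taking instead $a_j = (N+1)^{-1/2}$ so that $|S(x)|^2$ concentrates on the zeros $x=k/\delta$, $k\in\Z$, of $\Psi_\delta$.
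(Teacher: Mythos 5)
Your proposal is correct and follows essentially the same route as the paper: the lower and upper bounds come from expanding $\int_{\R}\Phi_\delta(x)|S(x)|^2\,\dx \geq 0$ and $\int_{\R}\Psi_\delta(x)|S(x)|^2\,\dx \geq 0$ with the majorant/minorant from Theorems~\ref{thm1.3}--\ref{thm1.4}, using that $\tL$ and $\tM$ are supported in $[-1,1]$ (and vanish at $\pm 1$) so that the Fourier transform equals $\pm r_\mu$ off $(-\delta,\delta)$; and sharpness is tested on $\xi_j=j\delta$ with $a_j=(-1)^j(N+1)^{-1/2}$ (resp.\ $a_j=(N+1)^{-1/2}$), exactly the choice in the paper. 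The one place where you wave your hands, ``a standard weak-convergence argument yields $\int\Phi_\delta|S|^2\,\dx\to 0$,'' does need to be justified with some care since the periodization of $\Phi_\delta$ can be unbounded near $\Z/\delta$: the paper makes this precise by rewriting $\int\Phi_\delta|S|^2\,\dx$ as $A(\delta,\mu)$ minus the $N$-th Ces\`aro mean of $\sum_{n\ne0}(-1)^n r_\mu(\delta n)$ and then invoking the pointwise convergence of the Fourier series of $q_\nu$ at $x=\hh$, i.e.\ equation~(\ref{bd7}) (and (\ref{bd12}) for the upper bound), which is exactly what your Fej\'er-concentration heuristic requires.
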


\begin{proof}
Write
\begin{equation*}\label{hf3}
u(x) = f_{\mu}(x) - f_{\mu}\bigl(\delta^{-1}\bigr) - G_{\nu}(\delta x)
\end{equation*}
for the nonnegative, integrable function that occurs in the statement of Theorem \ref{thm1.3}.  Then we have
\begin{align}\label{hf4}
\begin{split}
0 &\le \int_{-\infty}^{\infty} u(x)\Bigl|\sum_{m=0}^N a_m e(-\xi_m x)\Bigr|^2\ \dx \\
  &= \sum_{m=0}^N\sum_{n=0}^N a_m\overline{a}_n \int_{-\infty}^{\infty} u(x) e\bigl((\xi_n - \xi_m) x\bigr)\ \dx \\
  &= \tu(0) \sum_{n=0}^N \bigl|a_n\bigr|^2 
		+ \sum_{m=0}^N\sum_{\substack{n=0\\n\not= m}}^N a_m\overline{a}_n \tu(\xi_m - \xi_n).
\end{split}
\end{align}
As $\delta \le |\xi_m - \xi_n|$ whenever $m\not= n$, we get
\begin{equation*}\label{hf5}
\tu(\xi_m - \xi_n) = r_{\mu}(\xi_m - \xi_n)
\end{equation*}
by (\ref{intro64}) and (\ref{hf1}).  Thus (\ref{intro63}) and (\ref{hf4}) lead to the lower bound
\begin{equation}\label{hf6}
- A(\delta, \mu) \sum_{n=0}^N \bigl|a_n\bigr|^2 
	\le \sum_{m=0}^N\sum_{\substack{n=0\\n\not= m}}^N a_m\overline{a}_n r_{\mu}(\xi_m - \xi_n),
\end{equation}
where we have written
\begin{equation}\label{hf7}
A(\delta, \mu) = \tu(0) = \int_0^{\infty} \big\{\tfrac{2}{\lambda} 
			- \tfrac{1}{\delta}\csch\left(\tfrac{\lambda}{2\delta}\right)\big\}\ \dmu.
\end{equation}

Let $\nu$ be the measure defined on Borel subsets $E\subseteq (0,\infty)$ by (\ref{intro59}).  It follows from (\ref{hf1}) that
\begin{equation*}\label{bd5}
r_{\mu}(\delta t) = \delta^{-1}r_{\nu}(t)
\end{equation*}
for all real $t\not= 0$.  For $0 < x < 1$ we use (\ref{ef55}) and obtain the identity
\begin{align}\label{bd6} 
\begin{split}
\lim_{N\rightarrow \infty} \sum_{\substack{n=-N\\ n\not= 0}}^N r_{\mu}(\delta n)e(nx)
	&= \lim_{N\rightarrow \infty} \delta^{-1} \sum_{\substack{n=-N\\ n\not= 0}}^N r_{\nu}(n)e(nx) \\
	&= \lim_{N\rightarrow \infty} \delta^{-1} \sum_{\substack{n=-N\\ n\not= 0}}^N \tq_{\nu}(n)e(nx) \\
	&= \delta^{-1} \int_0^{\infty} p(\lambda, x)\ \dnu.
\end{split}
\end{align}
In particular, at $x = \h$ we find that
\begin{align}\label{bd7}
\begin{split}
\lim_{N\rightarrow \infty} \sum_{\substack{n=-N\\ n\not= 0}}^N (-1)^n r_{\mu}(\delta n)
	&= \delta^{-1} \int_0^{\infty} p(\lambda, \hh)\ \dnu \\
	&= - \int_0^{\infty} \big\{\tfrac{2}{\lambda} 
			- \tfrac{1}{\delta}\csch\left(\tfrac{\lambda}{2\delta}\right)\big\}\ \dmu.
\end{split}
\end{align}
To see that the constant $A(\delta, \mu)$ is sharp we apply (\ref{hf6}) with 
\begin{equation*}\label{bd8}
a_n = (N+1)^{-1/2} (-1)^n,\quad\text{and}\quad \xi_n = \delta n.
\end{equation*}
We find that 
\begin{align}\label{bd9}
\begin{split}
- A(\delta, \mu)
	&\le (N+1)^{-1} \sum_{m=0}^N\sum_{\substack{n=0\\n\not= m}}^N (-1)^{m-n} r_{\mu}\bigl(\delta(m-n)\bigr) \\
	&= (N+1)^{-1} \sum_{\substack{n=-N\\n\not= 0}}^N (N+1 - |n|) (-1)^n r_{\mu}(\delta n).
\end{split}
\end{align}
We let $N\rightarrow \infty$ on the right hand side of (\ref{bd9}) and use (\ref{bd7}).  In this way we conclude that
\begin{equation*}\label{bd10}
\int_0^{\infty} \big\{\tfrac{2}{\lambda} 
			- \tfrac{1}{\delta}\csch\left(\tfrac{\lambda}{2\delta}\right)\big\}\ \dmu \le A(\delta, \mu).
\end{equation*}

Now suppose that $\mu$ satisfies the more restrictive condition (\ref{intro47}).  Write
\begin{equation*}\label{hf8}
v(x) = H_{\nu}(\delta x) + f_{\mu}\bigl(\delta^{-1}\bigr) - f_{\mu}(x)
\end{equation*} 
for the nonnegative, integrable function that occurs in the statement of
Theorem \ref{thm1.4}.  We proceed as in (\ref{hf4}) to derive the inequality
\begin{align}\label{hf9}
\begin{split}
0 &\le \int_{-\infty}^{\infty} v(x)\Bigl|\sum_{m=0}^N a_m e(-\xi_m x)\Bigr|^2\ \dx \\
  &= \tv(0) \sum_{n=0}^N \bigl|a_n\bigr|^2 
  		+ \sum_{m=0}^N\sum_{\substack{n=0\\n\not= m}}^N a_m\overline{a}_n \tv(\xi_m - \xi_n).
\end{split}
\end{align}
In this case (\ref{intro73}) and (\ref{hf1}) imply that
\begin{equation*}\label{hf9.5}
\tv(\xi_m - \xi_n) = -r_{\mu}(\xi_m - \xi_n)
\end{equation*}
whenever $m\not= n$.  Therefore (\ref{intro72}) and (\ref{hf9}) lead to the upper bound
\begin{equation}\label{hf10}
\sum_{m=0}^N\sum_{\substack{n=0\\n\not= m}}^N a_m\overline{a}_n r_{\mu}(\xi_m - \xi_n)
	\le B(\delta, \mu) \sum_{n=0}^N \bigl|a_n\bigr|^2
\end{equation}
where
\begin{equation}\label{hf11}
B(\delta, \mu) = \tv(0)
  = \int_0^{\infty} \big\{\tfrac{1}{\delta}\coth\left(\tfrac{\lambda}{2\delta}\right) - \tfrac{2}{\lambda}\big\}\ \dmu.
\end{equation}

If $\mu$ satisfies (\ref{intro47}) then (\ref{ef59}) holds for all $x$ in $\R/\Z$.  Thus the identity
(\ref{bd6}) continues to hold.  In particular, at $x = 0$ we find that
\begin{align}\label{bd12}
\begin{split}
\lim_{N\rightarrow \infty} \sum_{\substack{n=-N\\ n\not= 0}}^N r_{\mu}(\delta n)
  &= \delta^{-1} \int_0^{\infty} p(\lambda, 0)\ \dnu \\
  &= \int_0^{\infty} \big\{\tfrac{1}{\delta}\coth\left(\tfrac{\lambda}{2\delta}\right) - \tfrac{2}{\lambda}\big\}\ \dmu.
\end{split}
\end{align} 
To show that the constant $B(\delta, \mu)$ is sharp we apply (\ref{hf10}) with
\begin{equation*}\label{bd13}
a_n = (N+1)^{-1/2},\quad\text{and}\quad \xi_n = \delta n.
\end{equation*}
In this case we find that
\begin{align}\label{bd14}
(N+1)^{-1} \sum_{\substack{n=-N\\n\not= 0}}^N (N+1 - |n|) r_{\mu}(\delta n) \le B(\delta, \mu). 
\end{align}
We let $N\rightarrow \infty$ on the left of $(\ref{bd14})$ and use (\ref{bd12}).  We conclude that
\begin{equation*}\label{bd15}
\int_0^{\infty} \big\{\tfrac{1}{\delta}\coth\left(\tfrac{\lambda}{2\delta}\right) 
	- \tfrac{2}{\lambda}\big\}\ \dmu \le B(\delta, \mu).
\end{equation*}
This proves the theorem.
\end{proof}

An interesting special case of the Hermitian forms considered here occurs by selecting the measure $\mu_{\sigma}$ defined in (\ref{Intro26.1}). We recall that for $0 < \sigma < 2$ the measure $\mu_{\sigma}$ satisfies the 
condition (\ref{intro31}), and it satisfies (\ref{intro47}) only for $1 < \sigma < 2$. For this special case we obtain the following inequalities, which are related to the discrete one dimensional Hardy-Littlewood-Sobolev inequalities (see \cite[page 288]{HPL}).

\begin{corollary}\label{cor7.2}  Let $\xi_0, \xi_1, \xi_2, \dots , \xi_N$ be real numbers such that 
$0 < \delta \le |\xi_m - \xi_n|$ whenever $m\not= n$.  Let $a_0, a_1, a_2, \dots , a_N$ be complex numbers.
If $0 < \sigma < 1$ then
\begin{equation}\label{hf20}
-\frac{(2 - 2^{2 - \sigma})\zeta(\sigma)}{\delta^{\sigma}} \sum_{n=0}^N |a_n|^2
	\le \sum_{m=0}^N\sum_{\substack{n=0\\n\not= m}}^N \frac{a_m\overline{a}_n}{|\xi_m - \xi_n|^{\sigma}},
\end{equation}
if $\sigma = 1$ then
\begin{equation}\label{hf21}
-\frac{\log 4}{\delta} \sum_{n=0}^N |a_n|^2
	\le \sum_{m=0}^N\sum_{\substack{n=0\\n\not= m}}^N \frac{a_m\overline{a}_n}{|\xi_m - \xi_n|},
\end{equation}
and if $1 < \sigma < 2$ then
\begin{equation}\label{hf22}
-\frac{(2 - 2^{2 - \sigma})\zeta(\sigma)}{\delta^{\sigma}} \sum_{n=0}^N |a_n|^2
	\le \sum_{m=0}^N\sum_{\substack{n=0\\n\not= m}}^N \frac{a_m\overline{a}_n}{|\xi_m - \xi_n|^{\sigma}}
	\le \frac{2\zeta(\sigma)}{\delta^{\sigma}} \sum_{n=0}^N |a_n|^2,
\end{equation}
where $\zeta$ denotes the Riemann zeta-function.  The constants occurring in these inequalities are sharp.
\end{corollary}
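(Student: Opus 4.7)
The plan is to apply Theorem \ref{thm7.1} to the specific measures $\mu_{\sigma}$ defined by (\ref{Intro26.1}) and then reduce the resulting constants to the stated closed forms. Since $\mu_{\sigma}$ satisfies (\ref{intro31}) for $0<\sigma<2$ and the stronger (\ref{intro47}) only for $1<\sigma<2$, the hypotheses of Theorem \ref{thm7.1} make exactly the lower bound available in the full range and the upper bound available only for $1<\sigma<2$, matching the three cases of the corollary.

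First I would compute the kernel $r_{\mu_{\sigma}}$ of (\ref{hf1}) explicitly. With the standard beta-type evaluation $\int_0^{\infty}\lambda^{1-\sigma}(\lambda^2+a^2)^{-1}\dl=\pi a^{-\sigma}/(2\sin(\pi\sigma/2))$ applied at $a=2\pi|t|$, one gets $r_{\mu_{\sigma}}(t)=C_{\sigma}|t|^{-\sigma}$ with the positive constant $C_{\sigma}=\pi^{1-\sigma}2^{-\sigma}/\sin(\pi\sigma/2)$. Consequently $\sum_{m\ne n}a_m\overline{a}_n r_{\mu_{\sigma}}(\xi_m-\xi_n)=C_{\sigma}\sum_{m\ne n}a_m\overline{a}_n|\xi_m-\xi_n|^{-\sigma}$, so dividing the two inequalities (\ref{bd1}) and (\ref{bd3}) by $C_{\sigma}$ converts them into the form stated in the corollary, and sharpness is preserved since we only rescale by a positive constant.

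The main calculation is identifying $A(\delta,\mu_{\sigma})/C_{\sigma}$ and $B(\delta,\mu_{\sigma})/C_{\sigma}$. After the substitution $u=\lambda/(2\delta)$, (\ref{bd2}) and (\ref{bd4}) become
\begin{equation*}
A(\delta,\mu_{\sigma})=2^{1-\sigma}\delta^{-\sigma}\int_0^{\infty}\bigl(\tfrac{1}{u}-\csch u\bigr)u^{-\sigma}\du,\qquad
B(\delta,\mu_{\sigma})=2^{1-\sigma}\delta^{-\sigma}\int_0^{\infty}\bigl(\coth u-\tfrac{1}{u}\bigr)u^{-\sigma}\du.
\end{equation*}
I would evaluate these by inserting the Mittag-Leffler expansions
\begin{equation*}
\tfrac{1}{u}-\csch u=-2u\sum_{n=1}^{\infty}\frac{(-1)^n}{u^2+n^2\pi^2},\qquad
\coth u-\tfrac{1}{u}=2u\sum_{n=1}^{\infty}\frac{1}{u^2+n^2\pi^2},
\end{equation*}
interchanging sum and integral, and reapplying the beta integral termwise with $a=n\pi$. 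This produces the Dirichlet series $\eta(\sigma)=(1-2^{1-\sigma})\zeta(\sigma)$ for the $A$-integral (convergent for $0<\sigma<2$) and $\zeta(\sigma)$ for the $B$-integral (convergent for $1<\sigma<2$), multiplied by $\pi^{1-\sigma}/\sin(\pi\sigma/2)$. Dividing by $C_{\sigma}$ cancels the trigonometric and $\pi$-factors, yielding $A(\delta,\mu_{\sigma})/C_{\sigma}=(2-2^{2-\sigma})\zeta(\sigma)\delta^{-\sigma}$ and $B(\delta,\mu_{\sigma})/C_{\sigma}=2\zeta(\sigma)\delta^{-\sigma}$, exactly matching (\ref{hf20}), (\ref{hf22}).

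The case $\sigma=1$ is a removable singularity of the formula for the lower bound, since $(1-2^{1-\sigma})\to 0$ while $\zeta(\sigma)$ has a simple pole; I would handle it directly by evaluating $\int_0^{\infty}(1/u-\csch u)u^{-1}\du=\eta(1)=\log 2$ (from the same termwise integration), giving the announced $\log 4/\delta$. The only technical point worth attention is the interchange of sum and integral in the Mittag-Leffler step; uniformly on compacta in $\sigma$ one has dominated convergence in the $B$-case (positive terms) and an alternating-series tail bound in the $A$-case, which suffices throughout the stated ranges.
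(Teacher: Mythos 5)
Your proposal is correct and follows the same overall strategy as the paper: specialize Theorem~\ref{thm7.1} to the measure $\mu_{\sigma}$, compute $r_{\mu_{\sigma}}(t)=C_{\sigma}|t|^{-\sigma}$ from (\ref{hf1}), and identify the constants $A(\delta,\mu_{\sigma})$ and $B(\delta,\mu_{\sigma})$. The only genuine difference is in how the two integrals in (\ref{bd2}) and (\ref{bd4}) are evaluated. The paper records them as $(2-2^{2-\sigma})\Gamma(1-\sigma)\zeta(1-\sigma)\delta^{-\sigma}$ and $2\Gamma(1-\sigma)\zeta(1-\sigma)\delta^{-\sigma}$ (essentially the Mellin transforms of $\tfrac{2}{\lambda}-\csch$ and $\coth-\tfrac{2}{\lambda}$, continued to $0<\sigma<2$), then reduces to the stated form via the functional equation for $\zeta$ combined with the reflection formula that sits inside the normalization $C_{\sigma}=\pi/((2\pi)^{\sigma}\sin(\pi\sigma/2))$. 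You instead expand $\csch$ and $\coth$ in their Mittag--Leffler partial-fraction series, integrate termwise against $u^{-\sigma}$ using the same beta integral that gives $C_{\sigma}$, and land directly on $\eta(\sigma)=(1-2^{1-\sigma})\zeta(\sigma)$ and $\zeta(\sigma)$, so the trigonometric and $\pi$-factors cancel against $C_{\sigma}$ without invoking the functional equation. Your route is arguably more elementary and also makes the $\sigma=1$ case transparent as $\eta(1)=\log 2$ rather than as a limiting/removable-singularity argument; the paper's route is shorter to state once one is willing to quote the Mellin transforms of $\csch$ and $\coth$ and the functional equation. Both correctly restrict the upper bound (\ref{hf22}) to $1<\sigma<2$ because that is exactly where $\mu_{\sigma}$ satisfies (\ref{intro47}), and both inherit sharpness directly from Theorem~\ref{thm7.1} after rescaling by the positive constant $C_{\sigma}$.
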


\begin{proof}  For $\sigma\not= 1$ the integral on the right of (\ref{hf7}) is given by
\begin{equation*}\label{hf23}
\int_0^{\infty} \big\{\tfrac{2}{\lambda} - 
	\tfrac{1}{\delta}\csch\bigl(\tfrac{\lambda}{2\delta}\bigr)\big\}\ \lambda^{-\sigma}\ \dl
	= \frac{\bigl(2 - 2^{2 - \sigma}\bigr)\Gamma(1 - \sigma)\zeta(1 - \sigma)}{\delta^{\sigma}},
\end{equation*}
where $\zeta$ is the Riemann zeta-function.  And for $0 < \sigma < 2$ we find that
\begin{equation*}\label{hf24}
\int_0^{\infty} \frac{2\lambda}{\lambda^2 + 4\pi^2 t^2}\ \lambda^{- \sigma}\ \dl
	= \frac{\pi}{(2\pi |t|)^{\sigma}\sin \tfrac{\pi\sigma}{2}}.
\end{equation*}
When these identities are used in (\ref{hf6}) we obtain the inequality
\begin{align}\label{hf25}
\begin{split}
\frac{\bigl(2 - 2^{2 - \sigma}\bigr)\Gamma(1 - \sigma)\zeta(1 - \sigma)}{\delta^{\sigma}}&\sum_{n=0}^N |a_n|^2 \\
		\le \frac{\pi}{(2\pi)^{\sigma}\sin \tfrac{\pi\sigma}{2}}
		&\sum_{m=0}^N\sum_{\substack{n=0\\n\not= m}}^N \frac{a_m\overline{a}_n}{|\xi_m - \xi_n|^{\sigma}}.
\end{split}
\end{align}
Then (\ref{hf25}) leads to the lower bounds in (\ref{hf20}) and (\ref{hf22}) by using the functional equation for the
Riemann zeta-function.  

If $\sigma = 1$ we have
\begin{equation}\label{hf26}
\int_0^{\infty} \big\{\tfrac{2}{\lambda} 
	- \tfrac{1}{\delta}\csch\bigl(\tfrac{\lambda}{2\delta}\bigr)\big\}\ \lambda^{-1}\ \dl = \frac{\log 2}{\delta},
\end{equation}
and 
\begin{equation}\label{hf27}
\int_0^{\infty} \frac{2\lambda}{\lambda^2 + 4\pi^2 t^2}\ \lambda^{-1}\ \dl = \frac{1}{2|t|}.
\end{equation}
We use (\ref{hf26}) and (\ref{hf27}) in (\ref{hf6}) and obtain the remaining lower bound (\ref{hf21}).

For $1 < \sigma < 2$ the integral on the right of (\ref{hf11}) is
\begin{equation*}\label{hf28}
\int_0^{\infty} \big\{\tfrac{1}{\delta}
	\coth\left(\tfrac{\lambda}{2\delta}\right) - \tfrac{2}{\lambda}\big\}\ \lambda^{-\sigma}\ \dl
	= \frac{2\Gamma(1 - \sigma)\zeta(1 - \sigma)}{\delta^{\sigma}}.
\end{equation*}
\end{proof}
We can extend the inequality (\ref{hf22}) to the case $\sigma = 2$ by continuity. A natural question is 
whether the inequality (\ref{hf22}) remains valid for $\sigma >2$.  F. Littmann showed in \cite{Lit} that 
this true when $\sigma$ is an even integer, which suggests an affirmative answer. We expect to return to this subject in a future paper.

\section{Erd\"os-Tur\'an Inequalities}

Let $x_1, x_2, \dots , x_M$ be a finite set of points in $\R/\Z$.  A basic problem in the theory
of equidistribution is to estimate the discrepancy of the points $x_1, x_2, \dots , x_M$ by an expression
that depends on the Weyl sums
\begin{equation}\label{et0}
\sum_{m=1}^M e(nx_m),\quad\text{where}\quad n = 1, 2, \dots , N.
\end{equation} 
This is most easily accomplished by introducing the sawtooth function $\psi:\R/\Z\rightarrow \R$, defined by
\begin{equation*}\label{et1}
\psi(x) = \begin{cases} x-[x]-\tfrac12 & \text{if $x$ is not in $\Z$},\\
            0 & \text{if $x$ is in $\Z$},\end{cases}
\end{equation*}
where $[x]$ is the integer part of $x$.  Then a simple definition for the discrepancy of the finite set is 
\begin{equation*}\label{et2}
D_M(\bx) = \sup_{y\in\R/\Z} \Bigl|\sum_{m=1}^M \psi\bigl(x_m - y\bigr)\Bigr|.
\end{equation*}
In this setting the Erd\"{o}s-Tur\'{a}n inequality is an upper bound for $D_M$ of the form
\begin{equation}\label{et3}
D_M(\bx) \le c_1 MN^{-1} + c_2 \sum_{n=1}^N n^{-1} \Bigl|\sum_{m=1}^M e(nx_m)\Bigr|,
\end{equation}
where $c_1$ and $c_2$ are positive constants.  In applications to specific sets the parameter $N$ can be 
selected so as to minimize the right hand side of (\ref{et3}).  Bounds of this kind follow easily from knowledge 
of the extremal trigonometric polynomials that majorize and minorize the function $\psi(x)$.  This is discussed in
\cite{ET}, \cite{M2}, \cite{V}, and \cite{V2}.  An extension to the spherical cap discrepancy is derived 
in \cite{LV}, and a related inequality in several variables is obtained in \cite{BMV}.

Let $F_M(z)$ be the monic polynomial in $\C[z]$ having roots on the unit circle at the points 
$e(x_1), e(x_2), \dots , e(x_M)$, so that
\begin{equation*}\label{et4}
F_M(z) = \prod_{m=1}^M \bigl(z - e(x_m)\bigr).
\end{equation*}
Then an alternative expression, which also measures the relative uniform distribution of the points 
$x_1, x_2, \dots , x_M$ in $\R/\Z$, is given by
\begin{equation*}\label{et5}
\sup_{|z|\le 1} \log \bigl|F_M(z)\bigr| = \sup_{y\in\R/\Z} \sum_{m=1}^M \log \bigl|1 - e(x_m - y)\bigr|.
\end{equation*}
Using Theorem \ref{thm1.5} we obtain the bound
\begin{align}\label{et6}
\begin{split}
\sum_{m=1}^M \log \bigl|1 - e(x_m - y)\bigr| &\le \sum_{m=1}^M u_N(x_m - y) \\
	&= M(N+1)^{-1}\log 2 \\
	&\qquad\qquad + \sum_{1 \le |n| \le N} \tu_N(n) \Big\{\sum_{m=1}^M e(nx_m)\Big\} e(-ny) \\
	&\le M(N+1)^{-1}\log 2 + \sum_{n=1}^N n^{-1} \Bigl|\sum_{m=1}^M e(nx_m)\Bigr|,
\end{split}
\end{align}
which is analogous to (\ref{et3}).  We establish a generalization of this bound to polynomials with
zeros not necessarily on the unit circle.

Let $\alpha_1, \alpha_2, ..., \alpha_M $ be complex numbers and define
\begin{equation}\label{et7}
F_M(z) = \prod_{m=1}^M (z - \alpha_m).
\end{equation}
We wish to estimate $\sup \{|F_M(z)|: |z| \le 1\}$ by an expression that depends on the power sums
\begin{equation}\label{et8}
\sum_{m=1}^{M} (\alpha_m)^n,\quad\text{where}\quad 1\leq n \leq N.
\end{equation}

\begin{theorem}\label{thm8.1}  Let $F_M(z)$ be the monic polynomial defined by {\rm (\ref{et7})} and assume that 
$|\alpha_m| \le 1$ for each $m = 1, 2, \dots , M$.  Then for each nonnegative integer $N$ we have
\begin{equation}\label{et9}
\sup_{|z| \le 1} \log |F_M(z)| \le M(N+1)^{-1}\log 2 + \sum_{n=1}^N n^{-1} \Bigl|\sum_{m=1}^M (\alpha_m)^n\Bigr|.
\end{equation}
\end{theorem}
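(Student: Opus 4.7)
The plan is to reduce to the boundary $|z|=1$ via the maximum principle, and then extend the majorant $u_N$ of Theorem~\ref{thm1.5} harmonically into the disc, after which the bound reduces to a short Fourier calculation.

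Since $F_M$ is a polynomial, $\log|F_M|$ is subharmonic on $\C$, so the maximum principle gives
$$\sup_{|z|\le 1}\log|F_M(z)| = \max_{|z|=1}\log|F_M(z)|.$$
For $z=e(y)$ on the unit circle we have $F_M(e(y)) = e(My)\prod_{m=1}^M(1-\alpha_m e(-y))$, so it is enough to bound $\sum_{m=1}^M \log|1-\alpha_m e(-y)|$ uniformly in $y$.

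Since $x\mapsto \log|1-e(x)|$ is even, the uniqueness clause of Theorem~\ref{thm1.5} forces $u_N$ to be an even trigonometric polynomial, so $\tu_N(n)=\tu_N(-n)\in\R$. Define the (real) harmonic extension
$$U_N(z) = \tu_N(0) + \sum_{n=1}^N \tu_N(n)\bigl(z^n+\bar z^n\bigr), \qquad z\in \C,$$
which agrees with $u_N$ on $|z|=1$. I claim that $\log|1-z|\le U_N(z)$ throughout $|z|\le 1$. Indeed, $\log|1-z|-U_N(z)$ is subharmonic on $\C$ (harmonic off $z=1$, where it tends to $-\infty$), and by (\ref{intro91}) its boundary values on $|z|=1$ are $\le 0$; the maximum principle for subharmonic functions then yields the claim on the closed disc.

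Applying this with $z=\alpha_m e(-y)$, which is permissible because $|\alpha_m|\le 1$, and summing over $m$,
\begin{align*}
\sum_{m=1}^M \log|1-\alpha_m e(-y)| &\le \sum_{m=1}^M U_N\bigl(\alpha_m e(-y)\bigr) \\
&= M\,\tu_N(0) + \sum_{n=1}^N \tu_N(n)\bigl[S_n\,e(-ny)+\overline{S_n}\,e(ny)\bigr],
\end{align*}
where $S_n=\sum_{m=1}^M \alpha_m^n$. Using $\tu_N(0)=\log 2/(N+1)$ and $|\tu_N(n)|\le 1/(2n)$ from Theorem~\ref{thm1.5}, each pair of conjugate terms in the $n$-th summand is bounded in modulus by $2|\tu_N(n)||S_n|\le |S_n|/n$, which produces (\ref{et9}) after taking the supremum over $y$. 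The principal step is the harmonic-extension inequality $\log|1-z|\le U_N(z)$ on the closed unit disc; once that is established, the rest is routine.
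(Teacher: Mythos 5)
Your proposal is correct and takes essentially the same route as the paper: both reduce to the unit circle by subharmonicity of $\log|F_M|$, extend the trigonometric majorant harmonically into the disc (your $U_N(z)$ coincides with the paper's $\Re\{v_N(z)\}$ since the $\tu_N(n)$ are real and even in $n$), and finish with the bound $|\tu_N(n)|\le 1/(2|n|)$ and $\tu_N(0)=\log 2/(N+1)$ from Theorem~\ref{thm1.5}. Your explicit appeal to uniqueness to establish that $u_N$ is even, and to subharmonicity of $\log|1-z|-U_N(z)$ to handle the boundary singularity at $z=1$, are minor clarifications of steps the paper leaves implicit.
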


\begin{proof}  Let $u_N(x)$ be the trigonometric polynomial that occurs in Theorem \ref{thm1.5}, and let $v_N(z)$ denote 
the algebraic polynomial
\begin{equation}\label{et10}
v_N(z) = \tu_N(0) + 2 \sum_{n=1}^N \tu_N(n) z^n.
\end{equation}
Then (\ref{intro91}) can be extended to the inequality
\begin{equation}\label{intro97}
\log |1 - z| \le \Re\{v_N(z)\}
\end{equation}
for all complex numbers $z$ with $|z|\le 1$.  This follows from the observation that both sides of (\ref{intro97}) are harmonic
functions on the open unit disk $\Delta = \{z\in\C: |z|<1\}$, and (\ref{intro91}) asserts that (\ref{intro97}) holds at each
point $z = e(x)$ on the boundary of $\Delta$.

As $z\mapsto \log |F_M(z)|$ is subharmonic on $\Delta$, there exists a point $e(y)$ on the boundary of $\Delta$ such that
\begin{align}\label{et11}
\begin{split}
\sup_{|z| \le 1} \log |F_M(z)| &= \log \bigl|F_M\bigl(e(y)\bigr)\bigr| \\
	&= \sum_{m=1}^M \log |1 - e(-y)\alpha_m| \\
	&\le \sum_{m=1}^M \Re\big\{v_N\bigl(e(-y)\alpha_m\bigr)\big\} \\
	&= M\tu_N(0) + 2 \sum_{n=1}^N \tu_N(n) \Re\Big\{e(-ny) \sum_{m=1}^M (\alpha_m)^n\Big\}.
\end{split}
\end{align}
The inequality (\ref{et9}) follows from (\ref{et11}) by applying (\ref{intro92}) and (\ref{intro93}).
\end{proof}

If $F_M(z)$ is defined by (\ref{et7}), but we do not assume that the roots are in the closed unit disk, we can
still obtain a bound for $\sup \{|F_M(z)|: |z| \le 1\}$.  In this more general case, however, we must modify the
power sums (\ref{et8}).  Suppose that the roots of $F_M$ are arranged so that
\begin{equation*}\label{et20}
0 \le |\alpha_1| \leq |\alpha_2| \leq \dots \leq |\alpha_L| \leq 1 < |\alpha_{L+1}| \leq \dots \leq |\alpha_{M}|. 
\end{equation*}
Then define
\begin{equation}\label{et21}
\beta_m = \begin{cases} \alpha_m & \text{if $1\leq m \leq L$,} \\
		(\overline{\alpha}_m)^{-1} & \text{if $L+1 \leq m \leq M$,}\end{cases}
\end{equation}

\begin{corollary}\label{cor8.2}  Let $F_M(z)$ be the monic polynomial defined by {\rm (\ref{et7})} and let
\begin{equation*}
\beta_1, \beta_2, \dots , \beta_M 
\end{equation*}
be complex numbers defined by {\rm (\ref{et21})}.  Then for each nonnegative 
integer $N$ we have
\begin{align}\label{et22}
\begin{split}
\sup_{|z| \le 1} \log |F_M(z)| \le \sum_{m=1}^M &\log^+ |\alpha_m| \\
	&+ M(N+1)^{-1}\log 2 + \sum_{n=1}^N n^{-1} \Bigl|\sum_{m=1}^M (\beta_m)^n\Bigr|.
\end{split}
\end{align}
\end{corollary}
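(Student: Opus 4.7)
The plan is to reduce Corollary \ref{cor8.2} to Theorem \ref{thm8.1} by means of the classical ``Blaschke reflection'' of the zeros that lie outside the unit disc. Define the auxiliary monic polynomial
\[
G_M(z) = \prod_{m=1}^M (z - \beta_m),
\]
where the $\beta_m$ are given by (\ref{et21}). By construction $|\beta_m|\le 1$ for all $m$, so Theorem \ref{thm8.1} is directly applicable to $G_M$ and gives
\[
\sup_{|z|\le 1}\log |G_M(z)| \;\le\; M(N+1)^{-1}\log 2 + \sum_{n=1}^N n^{-1}\Bigl|\sum_{m=1}^M (\beta_m)^n\Bigr|.
\]

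The next step is to show that on the unit circle
\begin{equation}\label{plan1}
|F_M(z)| \;=\; \Bigl(\prod_{m=1}^M |\alpha_m|^{\mathbf 1_{|\alpha_m|>1}}\Bigr)\,|G_M(z)| \qquad \text{whenever } |z|=1.
\end{equation}
This reduces to the single-factor identity $|z-\alpha| = |\alpha|\,\bigl|z-1/\overline{\alpha}\bigr|$ on $|z|=1$, which follows at once by expanding both sides: for $|z|=1$,
\[
|\alpha|^2\,\bigl|z - 1/\overline{\alpha}\bigr|^2 = |\alpha|^2 - 2\,\Re(\overline{z}\alpha) + 1 = |z-\alpha|^2.
\]
Multiplying this identity over the indices $m$ with $|\alpha_m|>1$, and leaving the factors with $|\alpha_m|\le 1$ untouched, yields (\ref{plan1}). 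Taking logarithms turns the prefactor into $\sum_{m=1}^M \log^+|\alpha_m|$.

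Finally, since $F_M$ is holomorphic, $\log|F_M|$ is subharmonic on $\C$, so by the maximum principle its supremum on the closed disc $\{|z|\le 1\}$ is attained on the boundary $\{|z|=1\}$; the same remark applied to $G_M$ gives the trivial inequality $\sup_{|z|=1}\log|G_M(z)|\le \sup_{|z|\le 1}\log|G_M(z)|$. Combining these three ingredients,
\[
\sup_{|z|\le 1}\log|F_M(z)| = \sup_{|z|=1}\log|F_M(z)| = \sum_{m=1}^M \log^+|\alpha_m| + \sup_{|z|=1}\log|G_M(z)|,
\]
and bounding the last term by Theorem \ref{thm8.1} applied to $G_M$ delivers (\ref{et22}).

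There is essentially no obstacle here: the entire argument is a short reduction, and the only computational point that requires care is verifying the Blaschke identity on $|z|=1$ and noting that it is an \emph{equality} there (so no inequality is lost in the reflection step). The use of the maximum principle for the subharmonic function $\log|F_M|$ is standard and handles the passage from the boundary to the closed disc.
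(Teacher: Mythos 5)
Your proof is correct and is essentially the same argument the paper gives: reflect the zeros outside the unit disc to their Blaschke conjugates, observe that on $|z|=1$ the modulus is unchanged up to the constant factor $\prod_{|\alpha_m|>1}|\alpha_m|$ (the paper packages this via an explicit finite Blaschke product $B(z)$ with $|B|=1$ on the circle, which is exactly your single-factor identity $|z-\alpha|=|\alpha|\,|z-1/\overline{\alpha}|$), and then apply Theorem \ref{thm8.1} to the monic polynomial with roots $\beta_m$ together with the maximum principle for the subharmonic function $\log|F_M|$. The only cosmetic difference is that you apply the theorem to the genuinely monic polynomial $\prod_m(z-\beta_m)$ and track the constant $\sum\log^+|\alpha_m|$ separately, whereas the paper writes $G_M=BF_M$ and absorbs that constant into $G_M$; both are fine.
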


\begin{proof}  Define the finite Blaschke product
\begin{equation*}\label{et23}
B(z) = \prod_{l=L+1}^M \frac{1 - \overline{\alpha}_lz}{z - \alpha_l},
\end{equation*}
so that if $|z| = 1$ then $|B(z)| = 1$.  We find that
\begin{equation*}\label{et24}
G_M(z) = B(z)F_M(z) = \prod_{l=L+1}^M(-\overline{\alpha}_l) \prod_{m=1}^M (z - \beta_m),
\end{equation*}
is a polynomial with roots $\beta_1, \beta_2, \dots , \beta_M$.  As $|\beta_m| \le 1$ for each $m = 1, 2, \dots , M$,
we apply Theorem \ref{thm8.1} to $G_M(z)$ and (\ref{et22}) follows immediately.
\end{proof}

We note that by Jensen's formula the first sum on the right of (\ref{et22}) is
\begin{equation*}\label{et25}
\sum_{m=1}^M \log^+ |\alpha_m| = \int_{\R/\Z} \log \bigl|F_M\bigl(e(x)\bigr)\bigr|\ \dx.
\end{equation*}
Therefore this sum by itself could not be an upper bound for the left hand side of (\ref{et22}), except in the trivial
case where $\alpha_1 = \cdots = \alpha_m = 0$.

\end{document}